\def\NZQ{\mathbb}               % the font for N,Z,Q,R,C
\def\ZZ{{\NZQ Z}}
\def\RR{{\NZQ R}}
\def\CC{{\NZQ C}}
\def\frk{\mathfrak}               % font for "Fraktur"
\def\Sf{{\frk S}}
\def\Phi{{\frk N}}
\def\g{\gamma}
\def\opn#1#2{\def#1{\operatorname{#2}}} % to make operators
\opn\ini{in} \opn\sgn{sgn} \opn\GL{GL} \opn\Im{Im} \opn\GL{GL} \opn\EE{Ehr} \opn\Id{Id}
\opn\SYT{SYT} \opn\des{des} \opn\auto{Aut} \opn\trace{Tr}
\opn\gr{gr}
\newtheorem{Theorem}{Theorem}[section]
\newtheorem{TheoremM}{Theorem}
\newtheorem{Lemma}[Theorem]{Lemma}
\newtheorem{Corollary}[Theorem]{Corollary}
\theoremstyle{definition}
\newtheorem{Definition}[Theorem]{Definition}
\newtheorem{Question}[Theorem]{Question}
\theoremstyle{remark}
\newtheorem{Remark}[Theorem]{Remark}
\let\epsilon\varepsilon
\let\phi=\varphi
\let\kappa=\varkappa
\begin{document}

\title{Order polytopes of graded posets are gamma-effective}

\author{Alessio D'Al\`i and Akihiro Higashitani}

\address{Dipartimento di Matematica, Politecnico di Milano, Italy}
\email{alessio.dali@polimi.it}

\address{Akihiro Higashitani,
Department of Pure and Applied Mathematics,
Graduate School of Information Science and Technology,
Osaka University, 
Suita, Osaka 565-0871, Japan}
\email{higashitani@ist.osaka-u.ac.jp}

\subjclass[2020]{Primary: 05E18; Secondary: 52B20, 06A07, 52B15.}
\keywords{Equivariant Ehrhart series, order polytope, graded poset, sign-graded poset, gamma-effectiveness, lattice polytope, group action}

\begin{abstract}
Order polytopes of posets have been a very rich topic at the crossroads between combinatorics and discrete geometry since their definition by Stanley in 1986. Among other notable results, order polytopes of graded posets are known to be $\gamma$-non\-neg\-a\-tive by work of Br\"and\'en, who introduced the concept of sign-graded poset in the process.

In the present paper we are interested in proving an equivariant version of Br\"and\'en's result, using the tools of equivariant Ehrhart theory (introduced by Stapledon in 2011). Namely, we prove that order polytopes of graded posets are always $\gamma$-effective, i.e., that the $\gamma$-polynomial associated with the equivariant $h^*$-polynomial of the order polytope of any graded poset has coefficients consisting of actual characters. To reach this goal, we develop a theory of order polytopes of sign-graded posets, and find a formula to express the numerator of the equivariant Ehrhart series of such an object in terms of the saturations (\`a la Br\"and\'en) of the given sign-graded poset.
\end{abstract}

\maketitle

\section{Introduction}
In the present work we investigate the interaction between Ehrhart theory, group actions, and the combinatorics of (sign-)graded posets. More precisely, it is our main goal to prove that order polytopes of graded posets are $\gamma$-effective, thus generalizing to the equivariant setting a celebrated result of Br\"and\'en \cite{branden04}. In order to make sense of such a statement, we first need to build some vocabulary.

\subsection{Ehrhart theory, \texorpdfstring{$\gamma$}{gamma}-nonnegativity, and their equivariant counterparts}

Let $M \cong \mathbb{Z}^d$ be a $d$-dimensional lattice, let $M_{\mathbb{R}} = M \otimes_{\ZZ} \mathbb{R}$, and let $P \subset M_{\RR}$ be a full-dimensional $M$-lattice polytope, i.e., the convex hull of finitely many points in $M$. One can then consider how many lattice points are contained in $P$ and in each of its dilations $mP=\{m \alpha \mid \alpha \in P\}$, where $m \in \mathbb{Z}_{>0}$. It is customary to collect such data into the formal power series 
\begin{equation} \label{eq:intro Ehrhart}
\EE(P;t):=1+\sum_{m \geq 1}|mP \cap\ZZ^d|t^m = \frac{h_0^*+h_1^*t+\cdots+h_{s}^*t^{s}}{(1-t)^{\dim P+1}},
\end{equation} 
where $\dim P$ is the dimension of the affine hull of $P$. The formal series $\EE(P;t)$ is called the \textit{Ehrhart series} of $P$, and the polynomial appearing in the numerator of \eqref{eq:intro Ehrhart} is called the \textit{$h^*$-polynomial} of $P$, denoted by $h^*(P;t)$. The degree $s$ of $h^*(P;t)$ is called the \emph{degree} of the polytope $P$. For an introduction to Ehrhart theory, see \cite{beck-robins}.

Next, let us recall the notion of $\gamma$-nonnegativity for a palindromic polynomial with real coefficients.
Let $f(t)=\sum_{i=0}^sa_it^i$ be a polynomial of degree $s$ with $a_i \in \RR$. 
We say that $f(t)$ is \textit{palindromic} if $a_i=a_{s-i}$ holds for $i=0,1,\ldots,s$. 

If $f(t)$ is palindromic, then there exist $\gamma_0,\gamma_1,\ldots,\gamma_{\lfloor s/2 \rfloor} \in \RR$ such that 
\[f(t)=\sum_{i=0}^{\lfloor s/2 \rfloor}\gamma_i t^i(1+t)^{s-2i}.\]
We say that $f(t)$ is \textit{$\gamma$-nonnegative} (or sometimes \textit{$\gamma$-positive}) if $\gamma_i \geq 0$ for each $i$. The $\gamma$-nonnegativity property for a given palindromic polynomial $f(t)$ arises quite frequently in cases of combinatorial interest; moreover, it provides a middle ground between other well-studied properties. Indeed, if $f(t)$ is real-rooted then it must also be $\gamma$-nonnegative, and if it is $\gamma$-nonnegative then its coefficients $a_0, a_1, \ldots, a_s$ form a unimodal sequence. A very intriguing problem regarding $\gamma$-nonnegativity is \emph{Gal's conjecture}: is the $h$-polynomial of a flag homology sphere always $\gamma$-nonnegative? For a survey on $\gamma$-positivity including a discussion of Gal's conjecture we refer the reader to, e.g., \cite{athanasiadis-gamma}.

One might arrange the information of the $\gamma_i$'s into a new polynomial having these as coefficients, namely,
\[\gamma(f;t):=\sum_{i=0}^{\lfloor s/2 \rfloor} \gamma_it^i.\]
We call $\gamma(f;t)$ the \textit{$\gamma$-polynomial} of $f(t)$. A property of the $\gamma$-polynomial that will come in handy in what follows is that, if the palindromic polynomial $f(t)$ can be written as a product of two palindromic polynomials $f_1(t)$ and $f_2(t)$, then so is its $\gamma$-polynomial; more precisely, $f(t)=f_1(t)f_2(t)$ implies $\gamma(f;t)=\gamma(f_1;t)\gamma(f_2;t)$. 

Let us now go back to lattice polytopes. A characterization of those polytopes whose $h^*$-polynomial is palindromic is available; indeed, if $P$ is a full-dimensional $M$-lattice polytope in $M_{\RR} \cong \RR^d$, then its $h^*$-polynomial is palindromic (of degree $s$) if and only if the $(d+1-s)$-th dilation of $P$ is a translate of a reflexive polytope (cf. \cite[Theorem 2.1]{stapledon11}). It is however unclear which polytopes admit a $\gamma$-nonnegative $h^*$-polynomial, and this has been a very active field of research in the last few years: for a recent survey, see \cite[Section 3.3]{ferroni-higashitani}.
 
We now wish to deal with a \emph{symmetric} version of the above setting; namely, we want to take into the picture the action of a finite group. In the context of lattice polytopes, this idea gives rise to the concept of \emph{equivariant Ehrhart series}, formalized by Stapledon in \cite{stapledon11}. See, e.g., \cite{dali-delucchi} and \cite{stapledon24} for some recent developments. 

Let $M\cong \ZZ^d$ be a lattice and let $G$ be a finite group acting affinely on $M$ (i.e., there is a group homomorphism $\rho\colon G \to \mathrm{Aff}(M)$). Consider the lattice $\widetilde{M} := M \oplus \ZZ$, and note that every affine transformation $\phi$ of $M$ can be made into a linear transformation $\widetilde\phi \in \GL(\widetilde M)$ by asking that $\widetilde\phi(x \oplus u) = \phi(x) \oplus u$. In particular, the affine $G$-representation $\rho\colon G \to \mathrm{Aff}(M)$ extends to a linear $G$-representation $\widetilde\rho\colon G \to \GL(\widetilde M)$. Tensoring with $\mathbb{K} \in \{\RR,\CC\}$, the $G$-representations $\rho$ and $\widetilde\rho$ extend to (real or complex) $G$-representations $\rho_{\mathbb{K}}\colon G \to \mathrm{Aff}(M_{\mathbb{K}})$ and $\widetilde\rho_{\mathbb{K}}\colon G \to \GL(\widetilde{M}_{\mathbb{K}})$, where $M_{\mathbb{K}} := M \otimes \mathbb{K}$ and $\widetilde{M}_{\mathbb{K}} := \widetilde{M} \otimes \mathbb{K}$. 

Let $P \subset M_{\RR}$ be an $M$-lattice polytope that is invariant by the affine $G$-action $\rho_{\RR}$. Then, embedding the polytope $P$ at height one in $\widetilde{M}_{\RR}$, we get that $P \oplus \{1\}$ is invariant by the linear $G$-action $\widetilde\rho_{\RR}$, and so is each dilation $mP \oplus \{m\}$, where $m \in \ZZ_{>0}$. In particular, for each $m$, every element of $G$ permutes the lattice points in $mP \oplus \{m\}$. Let $\chi_{mP} \in \mathrm{R}_G$ be the character of the associated complex permutation representation, where $\mathrm{R}_G$ denotes the complex ring of virtual characters of $G$ (see Subsection~\ref{sec:rep}). 

\begin{Definition}
With the notation above, the \textit{equivariant Ehrhart series} of $P$ with respect to the given group action is 
\[\EE(P,\rho;t):=1+\sum_{m \geq 1}\chi_{mP}t^m \in \mathrm{R}_G[[t]].\]
Mimicking the rational expression of the usual Ehrhart series, one can define $\mathfrak{h}^*(P,\rho;t)$ as 
\[\mathfrak{h}^*(P,\rho;t) := \EE(P,\rho;t) \cdot \det(\Id-\widetilde{\rho}_{\CC} t),\]
where $\det(\Id-\widetilde\rho_{\CC}t)$ is the polynomial in $\mathrm{R}_G[t]$ defined as $\sum_{i=0}^{d+1}(-1)^i\chi_{\wedge^i\widetilde\rho_{\CC}}t^i$. Note however that ${\mathfrak{h}^*}(P,\rho;t)$ is a power series in $\mathrm{R}_G[[t]]$ and not necessarily a polynomial. The power series ${\mathfrak{h}^*}(P,\rho;t)$ is called the \textit{equivariant $h^*$-series} of $P$, or the \textit{equivariant $h^*$-polynomial} when it is indeed a polynomial. Depending on the source, ${\mathfrak{h}^*}(P,\rho;t)$ is sometimes denoted by $\phi(t)$ or $H^*(t)$. We say that $\mathfrak{h}^*(P,\rho;t)$ is \emph{effective} if so are all of its coefficients, i.e., if such coefficients are actual $G$-characters (see \Cref{sec:rep} for more details).
\end{Definition}

The equivariant Ehrhart series of $P$ ``remembers the symmetries'' of the given polytope in the following sense: by evaluating $\EE(P,\rho;t)$ at $g \in G$, one gets the Ehrhart series of the rational polytope $P^g:=\{x \in P \mid g \cdot x = x\}$, i.e., $\EE(P,\rho;t)(g)=\EE(P^g;t)$ \cite[Lemma 5.2]{stapledon11}. 
In particular, one recovers the classical Ehrhart series of $P$ by evaluating the equivariant Ehrhart series $\EE(P,\rho;t)$ at the unit element $e \in G$. 

Since its definition, the equivariant $h^*$-series ${\mathfrak{h}^*}(P,\rho;t)$ has been studied by several researchers: see, e.g., \cite{stapledon11, ASV, CHK, elia-kim-supina, stapledon24, dali-delucchi}. A driving topic in equivariant Ehrhart theory is the so-called \textit{effectiveness conjecture}\footnote{Note that the original conjecture claimed the equivalence of \emph{three} conditions, but it has since been shown by Santos and Stapledon that the third one (namely, the existence of a $G$-invariant nondegenerate hypersurface for the toric variety of $P$) is strictly stronger than $h^*$-effectiveness and polynomiality. For more details and an explicit counterexample, see \cite[Subsection 3.4.3]{elia-kim-supina}.}  (\cite[Conjecture 12.1]{stapledon11}) by Stapledon: 
\begin{center}${\mathfrak{h}^*}(P,\rho;t)$ is a polynomial if and only if ${\mathfrak{h}^*}(P,\rho;t)$ is effective.\end{center} 
The ``if'' part of the conjecture is easily shown to be true, i.e., one has that ${\mathfrak{h}^*}(P,\rho;t)$ is a polynomial if it is effective. The ``only if'' part of the conjecture is known to hold for some specific classes of polytopes and group actions, including simplices \cite[Proposition 6.1]{stapledon11}, hypercubes \cite[Section 9]{stapledon11}, permutahedra and hypersimplices under the action of the symmetric group (respectively \cite[Theorem 1.3]{ASV} and \cite[Theorem 3.60]{elia-kim-supina}), lattice polytopes admitting a $G$-invariant lattice triangulation \cite[Theorem 1.4]{stapledon24}, and so on. 
The effectiveness conjecture is still open in general, while a counterexample is known if one extends the conjecture from lattice polytopes to pseudo-integral polytopes: see \cite{CHK} for more details.

Let us now assume that the equivariant $h^*$-series is a palindromic polynomial in $\mathrm{R}_G[t]$. Then one can define the associated (equivariant) $\gamma$-polynomial, and the following question naturally arises: 
\begin{Question}\label{qu:intro}
Let $G$ be a finite group, let $M$ be a lattice, let $\rho\colon G \to \mathrm{Aff}(M)$ be an affine representation, and let $P \subset M_{\mathbb{R}}$ be an $M$-lattice polytope left invariant by $\rho_{\mathbb{R}} = \rho \otimes \RR$. Assume further that $h^*(P;t)$ is palindromic and $\gamma$-nonnegative and that $\mathfrak{h}^*(P,\rho;t)$ is a polynomial. 
Is ${\mathfrak{h}^*}(P,\rho;t)$ $\gamma$-effective, i.e., are all coefficients of the equivariant $\gamma$-polynomial of $P$ effective? 
\end{Question}

In general, the answer to \Cref{qu:intro} is negative; for an example, think of the standard $3$-dimensional cross-polytope $\diamond_3 := \mathrm{conv}\{\pm \mathbf{e}_1, \pm \mathbf{e}_2, \pm \mathbf{e}_3\}$ under the coordinate-permuting action $\rho$ of $\Sf_3$. The $h^*$-polynomial of $\diamond_3$ is well-known to be $(1+t)^3$, which implies that $h^*(\diamond_3;t)$ is $\gamma$-nonnegative. However, computing the equivariant $h^*$-series -- in this case, a polynomial -- of $\diamond_3$ one gets that $\mathfrak{h}^*(\diamond_3,\rho;t) = 1 + (1+\chi^{\tiny{\ydiagram{2,1}}})t + (1+\chi^{\tiny{\ydiagram{2,1}}})t^2 + t^3$, where $\chi^{\tiny{\ydiagram{2,1}}}$ is the character of the standard representation of $\mathfrak{S}_3$ (see Table~\ref{tab:S_3} in \Cref{sec:equivariant gamma-polynomials}), and thus the equivariant $\gamma$-polynomial is $\boldsymbol{\gamma}(\diamond_3,\rho;t) = 1 + (-2 + \chi^{\tiny{\ydiagram{2,1}}})t$.

\subsection{Main results}

The present paper deals with order polytopes of posets and their generalization to sign-graded posets. If $P$ is a finite poset, the \emph{order polytope} of $P$ is \[\mathscr{O}(P) := \{f \colon P \to [0,1] \mid p \leq_P q \Rightarrow f(p) \geq f(q)\}.\]
Such an object was introduced by Stanley \cite{stanley-poset} and has since been central in Ehrhart theory. Say that the poset $P$ is \emph{graded} if every maximal saturated chain in $P$ has the same length. Hibi proved in \cite{hibi-distributive} that the $h^*$-polynomial of the order polytope of a graded poset is palindromic. A fundamental result on the $h^*$-polynomials of order polytopes, due to Br\"and\'en \cite{branden04}, is the following: 
\begin{Theorem}[{\cite[Theorem 4.2]{branden04}}]\label{thm:branden}
If $P$ is a graded poset, then the $h^*$-polynomial of $\mathscr{O}(P)$ is (palindromic and) $\gamma$-nonnegative. 
\end{Theorem}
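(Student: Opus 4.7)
The plan is to rephrase the theorem as a statement about $P$-Eulerian polynomials, embed it in the theory of sign-graded posets, and then prove $\gamma$-nonnegativity by a saturation induction. As a first step, I would invoke Stanley's canonical triangulation of $\mathscr{O}(P)$: fix a natural labeling $\omega\colon P \to \{1, \ldots, |P|\}$ and index the maximal simplices of the triangulation by the linear extensions $\mathcal{L}(P, \omega) \subset \Sf_{|P|}$. The triangulation is unimodular and compatible with dilations, which gives
\[
h^*(\mathscr{O}(P); t) \;=\; \sum_{\sigma \in \mathcal{L}(P, \omega)} t^{\des(\sigma)}.
\]
This reduces the theorem to a purely combinatorial assertion about a descent-generating function indexed by linear extensions.

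Next, I would embed the problem in the context of \emph{sign-labeled posets}. A sign-labeling $\epsilon$ assigns $\pm 1$ to each covering relation of $P$, and $(P, \epsilon)$ is \emph{sign-graded} if every maximal chain has the same signed length. One defines a sign-weighted descent statistic $\des_\epsilon$ and the associated $W$-polynomial $W_{P,\epsilon}(t) := \sum_{\sigma} t^{\des_\epsilon(\sigma)}$. When $P$ is graded in the usual sense, the all-positive labeling makes $(P,+)$ sign-graded and $W_{P,+}(t) = h^*(\mathscr{O}(P); t)$. It therefore suffices to show that $W_{P,\epsilon}(t)$ is palindromic and $\gamma$-nonnegative whenever $(P, \epsilon)$ is sign-graded. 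Palindromy follows from a reversal symmetry on $\mathcal{L}(P,\omega)$: the common signed length of maximal chains controls uniformly the shift between $\des_\epsilon(\sigma)$ and $\des_\epsilon(\sigma^{\mathrm{rev}})$, yielding the desired functional equation.

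For $\gamma$-nonnegativity, which is the crux, I would induct on a combinatorial measure of how far $(P, \epsilon)$ is from being \emph{sign-saturated}. The base case is a sign-saturated sign-graded poset, whose $W$-polynomial has the explicit shape $t^i(1+t)^{s-2i}$ for some nonnegative integer $i$ and $s$ equal to the degree, so that the $\gamma$-decomposition is immediate and nonnegative. For the inductive step, one identifies an unsaturated pair in $(P, \epsilon)$ and splits it into two sign-graded posets $(P_1, \epsilon_1)$ and $(P_2, \epsilon_2)$ via a saturation move, producing an additive identity
\[
W_{P, \epsilon}(t) \;=\; W_{P_1, \epsilon_1}(t) + W_{P_2, \epsilon_2}(t),
\]
in which each summand is ``closer'' to being sign-saturated. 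Since the recursive structure preserves palindromy, one can take $\gamma$-polynomials on both sides and conclude $\gamma$-nonnegativity by induction.

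The main obstacle is designing the saturation move and verifying its compatibility with both the sign-graded hypothesis and the $\epsilon$-descent statistic. Concretely, one must check that (i) both $(P_1, \epsilon_1)$ and $(P_2, \epsilon_2)$ remain sign-graded with the same signed rank as $(P, \epsilon)$, (ii) the set of linear extensions of $P$ partitions as $\mathcal{L}(P_1) \sqcup \mathcal{L}(P_2)$, and (iii) the descent statistic $\des_\epsilon(\sigma)$ agrees with $\des_{\epsilon_i}(\sigma)$ on the corresponding part. Getting these three compatibilities to hold simultaneously is where the sign-graded hypothesis -- strictly stronger than needed merely for palindromy -- is used most essentially, and where Br\"and\'en's combinatorial insight enters decisively into the proof.
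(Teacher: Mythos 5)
Your plan is essentially Brändén's own route (and the skeleton the paper follows equivariantly in Theorem~\ref{thm:gamma-effective} via Theorem~\ref{thm:main theorem on h^*}): write $h^*(\mathscr{O}(P);t)$ as a descent-generating function over linear extensions via Stanley's canonical unimodular triangulation, pass to sign-graded posets, and decompose additively by repeatedly imposing one of the two possible orderings on an incomparable pair of elements whose ranks differ by one. The two-term identity $W_{P,\epsilon}=W_{P_1,\epsilon_1}+W_{P_2,\epsilon_2}$ and the compatibility checks (i)--(iii) you list are exactly Brändén's saturation lemma, and iterating it yields the identity $W_{P,\epsilon}=\sum_{(Q,\delta)} W_{Q,\delta}$ over all saturations, which is the non-equivariant shadow of equation \eqref{eq:main theorem}. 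Up to that point the proposal is sound (your palindromy argument via ``reversal'' of linear extensions is sketchy, since the reverse of a linear extension is a linear extension of the dual poset, but palindromy is available from Hibi's theorem or Brändén's reciprocity argument, so this is a minor issue).

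The genuine gap is the base case. A saturated sign-graded poset is an ordinal sum of antichains $A_0\oplus_{\pm1}A_1\oplus_{\pm1}\cdots\oplus_{\pm1}A_k$, and its $W$-polynomial is \emph{not} of the form $t^i(1+t)^{s-2i}$: it equals $t^i$ times the product of the Eulerian polynomials $A_{|A_j|}(t)$ (the order polytope of a $d$-element antichain is the cube $[0,1]^d$). Already for the $3$-element antichain one gets $1+4t+t^2\neq (1+t)^2$, so the induction does not bottom out in polynomials whose $\gamma$-decomposition is ``immediate.'' To close the argument you still need three ingredients you have not supplied: (a) the factorization of the $W$-polynomial of an ordinal sum as the product of the $W$-polynomials of the blocks, up to a shift by a power of $t$ (cf.\ \cite[Proposition 3.3]{branden04}); (b) the $\gamma$-nonnegativity of Eulerian polynomials themselves, which is the nontrivial Foata--Schützenberger theorem recalled in \Cref{rem:cube}, $A_d(t)=\sum_{w\in\widehat{\Sf_d}}t^{\des(w)}(1+t)^{d-1-2\des(w)}$; and (c) the multiplicativity of $\gamma$-polynomials for palindromic factors, together with the fact that all saturations have the same signed rank as $(P,\epsilon)$, so that all summands are palindromic with the \emph{same} center of symmetry and their $\gamma$-coefficients add. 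With (a)--(c) added, your argument coincides with Brändén's proof; without them, the stated base case is false and the induction does not conclude.
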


In order to prove such a result, Br\"anden introduced \emph{sign-graded posets} as a generalization of graded posets: see \Cref{sec:sign-graded posets} for the relevant definitions. The main goal of this paper is to establish an equivariant version of Theorem~\ref{thm:branden}, thus providing a large class of polytopes for which the answer to \Cref{qu:intro} is affirmative. To achieve such a result, we need to associate with each sign-consistent poset $(P,\varepsilon)$ a convex object $\mathscr{O}(P,\varepsilon)$, see \Cref{sec:order polytopes of sign-graded posets}. With a slight abuse of notation, we will call $\mathscr{O}(P,\varepsilon)$ the order polytope of $(P,\varepsilon)$. Such an object is convex but usually not closed, and is obtained from the usual order polytope $\mathscr{O}(P)$ by removing a suitable set of facets.

The first main result of the present paper yields an expression of the equivariant $h^*$-series of $\mathscr{O}(P,\varepsilon)$, that we denote by ${\mathfrak{h}^*}^G_{P,\varepsilon}(t)$, for an $\varepsilon$-consistent poset $P$. 
\begin{TheoremM}[{see Theorem~\ref{thm:main theorem on h^*}}] \label{thm:intro main theorem h^*}
Let $P$ be an $\varepsilon$-consistent poset and let $G$ be a subgroup of $\mathrm{Aut}(P, \varepsilon)$. Then
\begin{equation*}
{\mathfrak{h}^*}^G_{P,\varepsilon}(t) = \sum_{i=1}^k \mathrm{Ind}^G_{\mathrm{stab}_G(Q_i,\delta_i)}\mathrm{Res}^{\mathrm{Aut}(Q_i, \delta_i)}_{\mathrm{stab}_G(Q_i,\delta_i)}{\mathfrak{h}^*}^{\mathrm{Aut}(Q_i, \delta_i)}_{Q_i, \delta_i}(t),
\end{equation*}
where $(Q_1,\delta_1),\ldots, (Q_k,\delta_k)$ are representatives of the $G$-orbits of the set of all saturations of $(P,\varepsilon)$.
\end{TheoremM}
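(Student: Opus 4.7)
The plan is to lift a Br\"and\'en-style partition of $\mathscr{O}(P,\varepsilon)$ into saturations to the equivariant setting and then perform standard induction-restriction bookkeeping to produce the stated formula.

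First, I would appeal to the combinatorial decomposition that motivates the half-open object $\mathscr{O}(P,\varepsilon)$ (set up in \Cref{sec:order polytopes of sign-graded posets}), namely
\[\mathscr{O}(P,\varepsilon) = \bigsqcup_{(Q,\delta)} \mathscr{O}(Q,\delta),\]
where $(Q,\delta)$ ranges over all saturations of $(P,\varepsilon)$. Since $\mathscr{O}(-,-)$ is defined in purely combinatorial terms and $G \leq \auto(P,\varepsilon)$ permutes the set of saturations, this partition is $G$-equivariant; intersecting with $\widetilde{M}$ at height $m$ produces, for each $m \in \ZZ_{\geq 0}$, a $G$-equivariant decomposition of the lattice-point set $m\mathscr{O}(P,\varepsilon) \cap \widetilde{M}$.

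Second, I would regroup the saturations into $G$-orbits with representatives $(Q_1,\delta_1),\ldots,(Q_k,\delta_k)$ and apply the standard identification: if $X = \bigsqcup_{x \in G \cdot x_0} Y_x$ is a disjoint union of $G$-translates of a $\mathrm{stab}_G(x_0)$-set $Y = Y_{x_0}$, then the $G$-character of $X$ equals $\mathrm{Ind}^G_{\mathrm{stab}_G(x_0)}\chi^{\mathrm{stab}_G(x_0)}_{Y}$. Applied orbit by orbit with $Y = m\mathscr{O}(Q_i,\delta_i) \cap \widetilde{M}$, and observing that the $\mathrm{stab}_G(Q_i,\delta_i)$-action on these lattice points is simply the restriction of the full $\auto(Q_i,\delta_i)$-action, this yields
\[\chi^G_{m\mathscr{O}(P,\varepsilon)\cap\widetilde{M}} = \sum_{i=1}^k \mathrm{Ind}^G_{\mathrm{stab}_G(Q_i,\delta_i)}\mathrm{Res}^{\auto(Q_i,\delta_i)}_{\mathrm{stab}_G(Q_i,\delta_i)}\chi^{\auto(Q_i,\delta_i)}_{m\mathscr{O}(Q_i,\delta_i)\cap\widetilde{M}}\]
for every $m$. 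Summing over $m$ gives the analogous identity for the equivariant Ehrhart series.

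Third, to pass from Ehrhart series to equivariant $h^*$-series I would multiply both sides by $\det(\Id - \widetilde{\rho}_{\CC} t) \in \mathrm{R}_G[t]$. The crucial point is that every saturation lives in the same ambient lattice $\widetilde{M}$ with the same action of any given group element, so the $G$-version and the $\auto(Q_i,\delta_i)$-version of the determinantal factor have identical restriction to $\mathrm{stab}_G(Q_i,\delta_i)$. The projection (Frobenius) formula $\mathrm{Ind}^G_H(\chi \cdot \mathrm{Res}^G_H\psi) = \psi \cdot \mathrm{Ind}^G_H\chi$ then pulls the factor $\det(\Id - \widetilde{\rho}_{\CC} t)$ through each induction functor on the right, converting every Ehrhart series into the corresponding equivariant $h^*$-series and producing the identity of the theorem. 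The hard part is not this representation-theoretic bookkeeping but the combinatorial decomposition $\mathscr{O}(P,\varepsilon) = \bigsqcup_{(Q,\delta)}\mathscr{O}(Q,\delta)$ that underlies everything: choosing the right half-open version of the order polytope, proving the disjoint union really recovers $\mathscr{O}(P,\varepsilon)$, and verifying $G$-equivariance is where the real combinatorial work lies. Once that decomposition is in place, \Cref{thm:intro main theorem h^*} follows by the mechanical steps above.
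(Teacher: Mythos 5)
Your outline is correct, and it takes a genuinely different route from the paper's proof of \Cref{thm:main theorem on h^*}. You work at the level of lattice-point sets: a $G$-equivariant partition of $\mathscr{O}(P,\varepsilon)$ into the half-open order polytopes of its saturations gives, dilation by dilation, an isomorphism of permutation $G$-sets, hence an identity of equivariant Ehrhart series, which you then convert into the $\mathfrak{h}^*$-identity by multiplying with $\det(\Id-\widetilde\rho_{\CC}t)$ and applying the projection formula (this last step is fine: all groups involved act on the same $\widetilde{M}$ as subgroups of $\Sf_P$, so the determinantal factors agree after restriction to $\mathrm{stab}_G(Q_i,\delta_i)$). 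The paper never uses a set-level decomposition: it verifies the identity one group element at a time, using \Cref{lem:evaluation at g} and \Cref{lem:order polytope of a quotient poset} to rewrite ${\mathfrak{h}^*}^G_{P,\varepsilon}(t)(g)$ in terms of $h^*(\mathscr{O}(P/\langle g\rangle,\overline\varepsilon);t)$, invoking Br\"and\'en's result only as a polynomial identity for the quotient poset, translating saturations of $(P/\langle g\rangle,\overline\varepsilon)$ into $g$-invariant saturations of $(P,\varepsilon)$ via \Cref{lem:saturations of a quotient} and \Cref{lem:stabilizer}, and matching the outcome with the coset formula \eqref{eq:induced} for induced characters. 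Your route is more structural (a $G$-set isomorphism rather than a character identity checked pointwise); the paper's route only needs Br\"and\'en's theorem as a black box about $h^*$-polynomials of (quotient) posets and avoids having to justify the geometric partition.

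The one genuine issue is your first step. The decomposition $\mathscr{O}(P,\varepsilon)=\bigsqcup_{(Q,\delta)}\mathscr{O}(Q,\delta)$ is \emph{not} ``set up in \Cref{sec:order polytopes of sign-graded posets}'': the paper neither states nor proves it, so you cannot source it there. It is true, but it is precisely the partition of $(P,\omega)$-partitions according to saturations that underlies \cite[Theorem~3.2]{branden04}, and you must either quote that result in its set-theoretic form or reprove it; as you yourself note, this is where the real work sits. In addition, equivariance of the partition deserves an explicit line: one needs $g\cdot\mathscr{O}(Q,\delta)=\mathscr{O}(gQ,g\delta)$ for $g\in\mathrm{Aut}(P,\varepsilon)$, which combined with \Cref{lem:G-action on saturations} and the disjointness of the blocks guarantees that the stabilizer of a block is exactly $\mathrm{stab}_G(Q_i,\delta_i)$ (identified in \Cref{lem:stabilizer}), as your induced-character step requires. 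Finally, a word on the $m=0$ term is needed to respect the paper's convention for $0\mathscr{O}(P,\varepsilon)$: when $\varepsilon=\mathbf{1}$ exactly one saturation is all-ascending (the ordinal sum of the rank levels), and when $\varepsilon\neq\mathbf{1}$ no saturation is, so the decomposition persists at $m=0$. With these repairs the argument goes through.
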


The second main result is the following: 
\begin{TheoremM}[{see Theorem~\ref{thm:gamma-effective}}] \label{thm:intro main theorem gamma}
Let $P$ be a graded poset and let $G$ be a subgroup of $\mathrm{Aut}(P)$. 
Then ${\mathfrak{h}^*}^G_{P}(t)$ is $\gamma$-effective. 
\end{TheoremM}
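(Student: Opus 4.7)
The plan is to apply Theorem~\ref{thm:intro main theorem h^*} and reduce $\gamma$-effectiveness of ${\mathfrak{h}^*}^G_P(t)$ to the rigid case of saturated sign-graded posets. Equip the graded poset $P$ with the all-positive sign function $\varepsilon_+$; then $(P, \varepsilon_+)$ is sign-consistent, and since no facets of $\mathscr{O}(P)$ are removed in forming $\mathscr{O}(P, \varepsilon_+)$, one has ${\mathfrak{h}^*}^G_P(t) = {\mathfrak{h}^*}^G_{P, \varepsilon_+}(t)$. Applying Theorem~\ref{thm:intro main theorem h^*} then provides the decomposition
\[
{\mathfrak{h}^*}^G_{P, \varepsilon_+}(t) = \sum_{i=1}^k \mathrm{Ind}^G_{H_i}\mathrm{Res}^{\mathrm{Aut}(Q_i, \delta_i)}_{H_i}{\mathfrak{h}^*}^{\mathrm{Aut}(Q_i, \delta_i)}_{Q_i, \delta_i}(t),
\]
where $H_i = \mathrm{stab}_G(Q_i, \delta_i)$ and $(Q_i, \delta_i)$ runs over orbit representatives of the saturations of $(P, \varepsilon_+)$.

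Both $\mathrm{Ind}$ and $\mathrm{Res}$ map effective characters to effective characters and act coefficient-wise on polynomials in $\mathrm{R}_G[t]$, so they preserve effectiveness. By the very nature of a saturation of a graded sign-consistent poset, each summand on the right-hand side is palindromic of the same degree $s$ as ${\mathfrak{h}^*}^G_P(t)$. Since the $\gamma$-decomposition amounts to a linear expansion in the basis $\{t^j(1+t)^{s-2j}\}_{0 \le j \le s/2}$ of palindromic polynomials of that degree over the coefficient ring $\mathrm{R}_G$, the operation $\boldsymbol{\gamma}$ commutes with the summation above. Hence it suffices to prove $\gamma$-effectiveness of ${\mathfrak{h}^*}^{\mathrm{Aut}(Q_i, \delta_i)}_{Q_i, \delta_i}(t)$ separately for each saturated representative $(Q_i, \delta_i)$.

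The main obstacle lies precisely in this saturated case. For a saturated sign-graded poset $(Q, \delta)$ of rank $r$ with $a(\delta)$ negative covering relations in Br\"and\'en's sense, the classical argument shows that the $h^*$-polynomial equals $\beta(Q, \delta) \cdot t^{a(\delta)}(1+t)^{r - 2a(\delta)}$ for some nonnegative integer $\beta(Q, \delta)$ counting certain linear-extension-type objects, yielding a monomial $\gamma$-polynomial $\beta(Q, \delta) \cdot t^{a(\delta)}$. The plan is to lift this identity to the equivariant setting by establishing a factorization
\[
{\mathfrak{h}^*}^{\mathrm{Aut}(Q, \delta)}_{Q, \delta}(t) = t^{a(\delta)}(1+t)^{r - 2a(\delta)} \cdot \chi,
\]
where $\chi \in \mathrm{R}_{\mathrm{Aut}(Q, \delta)}$ is the permutation character induced by the natural $\mathrm{Aut}(Q, \delta)$-action on the set of cardinality $\beta(Q, \delta)$ underlying Br\"and\'en's count. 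Such a permutation character is automatically effective, and the multiplicativity of $\gamma$-polynomials on products of palindromic factors quoted in the introduction then gives $\boldsymbol{\gamma}({\mathfrak{h}^*}^{\mathrm{Aut}(Q, \delta)}_{Q, \delta};t) = t^{a(\delta)} \cdot \chi$, manifestly effective. The bulk of the technical work will therefore be to identify $\mathscr{O}(Q, \delta)$ explicitly with a half-open region (plausibly a product or pullback of half-open simplices) whose lattice points are permuted by $\mathrm{Aut}(Q, \delta)$ in accordance with Br\"and\'en's counting objects, and to verify that the resulting equivariant Ehrhart computation indeed produces the desired product factorization.
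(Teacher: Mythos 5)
Your overall scaffolding (apply Theorem~\ref{thm:intro main theorem h^*}, use that $\mathrm{Ind}$ and $\mathrm{Res}$ preserve effectiveness, and add up $\gamma$-expansions of summands symmetric about a common center) is the same as the paper's, but the two steps that carry the actual content are wrong. First, you apply Theorem~\ref{thm:intro main theorem h^*} to $(P,\varepsilon_+)=(P,\mathbf{1})$, implicitly expecting the saturations of $(P,\mathbf{1})$ to be rigid objects to which a ``classical argument'' applies. They are not: a saturation of $(P,\mathbf{1})$ need not be an ordinal sum of antichains. For instance, take $P$ with rank levels $\{m,a,c\},\{x,w,d\},\{y,e\},\{b,f\}$ and maximal chains $m\lessdot x\lessdot y\lessdot b$, $a\lessdot w\lessdot e\lessdot f$, $c\lessdot d\lessdot e\lessdot f$; the poset $Q$ with covers $m\lessdot x$, $x\lessdot y$, $y\lessdot b$, $x\lessdot a$, $a\lessdot w$, $y\lessdot w$, $w\lessdot c$, $c\lessdot d$, $y\lessdot d$, $d\lessdot e$, $b\lessdot e$, $e\lessdot f$ (with $\delta$ forced by the rank function) is a consistent, saturated extension of $(P,\mathbf{1})$ with the same rank function, yet $a\parallel b$, $b\parallel c$ and $a<c$, so incomparability is not transitive and $Q$ is no ordinal sum of antichains. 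This is exactly why the paper does \emph{not} run the argument with $\mathbf{1}$: it first switches to the parity labeling $\varepsilon_{\mathrm{par}}$ (using $\mathrm{Aut}(P,\varepsilon_{\mathrm{par}})=\mathrm{Aut}(P)$, \Cref{rem:parity-graded}, and the shift relation of \Cref{lem:different sign-graded structures}), because Br\"and\'en's Proposition 3.4 (\Cref{rem:nice saturations}) guarantees that saturations of a \emph{parity}-graded poset are ordinal sums of antichains with automorphism group a product of symmetric groups.

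Second, your key claim about the saturated case is false: for a saturated sign-graded poset $(Q,\delta)$ one does \emph{not} have $h^*(\mathscr{O}(Q,\delta);t)=\beta(Q,\delta)\,t^{a(\delta)}(1+t)^{r-2a(\delta)}$. Already the antichain on three elements is saturated, and its order polytope is the cube with $h^*=1+4t+t^2=(1+t)^2+2t$, whose $\gamma$-polynomial has two terms; in general the $h^*$-polynomial of a saturated piece is a power of $t$ times a \emph{product of Eulerian polynomials}, not a single $\gamma$-monomial, so the factorization ${\mathfrak{h}^*}^{\mathrm{Aut}(Q,\delta)}_{Q,\delta}(t)=t^{a(\delta)}(1+t)^{r-2a(\delta)}\cdot\chi$ you propose cannot hold and no ``permutation character on Br\"and\'en's counting set'' can rescue it. After the correct reduction (ordinal sums of antichains, multiplicativity of equivariant $\gamma$-polynomials as in \Cref{lem:gamma of order polytope}), the irreducible remaining input is the equivariant $\gamma$-effectiveness of the cube under the full symmetric group, i.e.\ the Shareshian--Wachs/Horiguchi et al.\ result recorded in \eqref{eq:gamma} and \eqref{eq:HMSSS}. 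This is a genuinely nontrivial theorem that your proposal omits and cannot be obtained by soft arguments: as the cross-polytope example in the introduction shows, $\gamma$-nonnegativity of the underlying $h^*$-polynomial does not imply $\gamma$-effectiveness of the equivariant one.
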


Note that, even though the statement of \Cref{thm:intro main theorem gamma} does not involve sign-graded posets, its proof does.

\subsection{Structure of this paper} 
Here is a brief description of the contents of the present paper. 
In \Cref{sec:sign-graded posets}, we recall what a sign-graded poset is and investigate its relations to quotient posets and saturations.  
In \Cref{sec:order polytopes of sign-graded posets}, we associate a convex object $\mathscr{O}(P,\varepsilon)$ with each $\varepsilon$-consistent poset $(P,\varepsilon)$, study its equivariant Ehrhart series and prove \Cref{thm:intro main theorem h^*}. 
In \Cref{sec:equivariant gamma-polynomials}, we introduce equivariant $\gamma$-polynomials of graded posets and prove \Cref{thm:intro main theorem gamma}.
In \Cref{sec:example}, we discuss in detail a concrete example. 

\subsection*{Acknowledgements}
AD is a member of INdAM--GNSAGA and has been partially supported by the PRIN 2020355B8Y grant ``Squarefree Gr\"obner degenerations, special varieties and related topics'' and by the PRIN 2022K48YYP grant ``Unirationality, Hilbert schemes, and singularities''. AH is partially supported by JSPS KAKENHI Grant Number JP24K00521 and JP21KK0043. The idea for the present paper originated in February 2024, when AH stayed at Politecnico di Milano for a research visit funded by the PRIN and JSPS grants mentioned above.

The authors are grateful to Matt Beck and Emanuele Delucchi for useful discussions on the topics of the present paper.

\section{Sign-graded posets and their automorphisms} \label{sec:sign-graded posets}
This section builds on the idea of a sign-graded poset, developed by Br\"and\'en in his paper \cite{branden04}, and complements it by considering appropriate notions of morphisms and quotients.

\subsection{Notation and first notions}
Let $P$ be a finite poset, let $E(P)$ be the set of covering relations of $P$ (i.e., those pairs $p \lessdot q$ of elements of $P$ such that $p < q$ and there exists no $z \in P$ such that $p < z < q$), and let $\varepsilon\colon E(P) \to \{-1,1\}$ (we will call such an $\varepsilon$ an \emph{edge labeling} of $P$).
\begin{Definition}
\phantom{ }
\begin{itemize}
    \item We define $\mathrm{Aut}(P,\varepsilon) := \{\psi \in \mathrm{Aut}(P) \mid \varepsilon(\psi(p) \lessdot \psi(q)) = \varepsilon(p \lessdot q) \textrm{ for all }p \lessdot q \in E(P)\}.$ Note that $\mathrm{Aut}(P,\varepsilon)$ is a subgroup of $\mathrm{Aut}(P)$.
    \item If for every maximal saturated chain $\mathcal{C}\colon p_0 \lessdot p_1 \lessdot \ldots \lessdot p_m$ of $P$, the quantity $\varepsilon(\mathcal{C}) := \sum_{i=0}^{m-1}\varepsilon(p_i \lessdot p_{i+1})$ is constant (note that the number $m$ may vary), we say that the pair $(P,\varepsilon)$ is a \emph{sign-graded poset}\footnote{The definition in the original paper by Br\"and\'en is slightly different, as it involves a \emph{vertex} labeling $\omega$ inducing the edge labeling $\varepsilon$. See also \Cref{rem:vertex labeling}.}, or that the poset $P$ is \emph{$\varepsilon$-graded}. In such case, we will write $r_P(\varepsilon)$ to denote the value of $\varepsilon(\mathcal{C})$ for any maximal saturated chain $\mathcal{C}$.
    \item If, for every $y \in P$, one has that the principal order ideal $P_{\leq y} = \{x \in P \mid x \leq y\}$ is $\varepsilon_y$-graded (where $\varepsilon_y$ is the restriction of $\varepsilon$ to $E(P_{\leq y})$), we say that $P$ is \emph{$\varepsilon$-consistent}.
    \item If $P$ is $\varepsilon$-consistent and $x \in P$, one defines the \emph{rank} of $x$ to be $\rho(x) := \varepsilon(\mathcal{C})$, where $\mathcal{C}$ is any maximal saturated chain of $P_{\leq x}$. This assignment produces a rank function $\rho\colon P \to \mathbb{Z}$.
\end{itemize}
\end{Definition}

A finite poset $P$ is graded (in the sense of Stanley: see, e.g., \cite{stanley-ec1}) if all maximal saturated chains have the same number of edges. In our language, $P$ is graded precisely when it can be given a $\mathbf{1}$-graded structure, i.e., when $P$ is sign-graded with respect to the edge labeling $\mathbf{1}$ that assigns a 1 to each element of $E(P)$. When this is the case, $\mathrm{Aut}(P,\mathbf{1})$ coincides with $\mathrm{Aut}(P)$. This reflects the known fact that automorphisms of a graded poset preserve its rank. In what follows, if $P$ is $\mathbf{1}$-graded, we will sometimes drop the ``$\mathbf{1}$'' from our notation.

\begin{Remark} \label{rem:vertex labeling}
As we already noted, Br\"and\'en gave a slightly different definition of sign-graded poset, focusing on the concept of a vertex labeling $\omega\colon P \to \{1, 2, 3, \ldots, |P|\}$. This is related to the combinatorial topic of $(P,\omega)$-partitions. We won't need this viewpoint for the present paper, but we wish to comment briefly on the connection between vertex and edge labelings. Given any vertex labeling $\omega$, Br\"and\'en describes how to associate with it an induced edge labeling $\varepsilon$: given $p \lessdot q \in E(P)$, set $\varepsilon(p \lessdot q) = 1$ (respectively, $-1$) if $\omega(p) < \omega(q)$ (respectively, $\omega(p) > \omega(q)$). If instead $\varepsilon$ is an edge labeling and $P$ is $\varepsilon$-consistent, we can easily find a vertex labeling $\omega$ inducing the edge labeling $\varepsilon$: indeed, it is enough to group the elements of $P$ by rank and choose $\omega$ so that $\rho(p) < \rho(q)$ implies that $\omega(p) < \omega(q)$.
\end{Remark}

We collect here some observations for later use.

\begin{itemize}
    \item Being $\varepsilon$-graded implies being $\varepsilon$-consistent; in fact, $P$ is $\varepsilon$-graded if and only if it is $\varepsilon$-consistent and the rank function $\rho$ assumes the same value on all maximal elements of $P$.
    \item If $P$ is $\varepsilon$-consistent and $p \lessdot p' \in E(P)$, then $\varepsilon(p \lessdot p') = \rho(p') - \rho(p)$. In particular, it follows that $|\rho(p') - \rho(p)| = 1$.
    \item If $P$ is $\varepsilon$-consistent and $\psi \in \mathrm{Aut}(P,\varepsilon)$, then $\rho(p) = \rho(\psi(p))$ for any $p \in P$; in other words, $\psi$ preserves the rank function of $(P,\varepsilon)$.
\end{itemize}

\begin{Definition}
Let $P$ and $Q$ be finite posets. The \textit{ordinal sum} $P \oplus Q$ is the poset obtained by equipping the disjoint union of $P$ and $Q$ with the partial order defined by 
\[x <_{P \oplus Q} y \overset{\text{def}}{\Longleftrightarrow} x < y \text{ in }P, \text{ or }x < y \text{ in }Q, \text{ or }x \in P, y \in Q.\]
Now let $\varepsilon\colon E(P) \to \{1,-1\}$ and $\delta\colon E(Q) \to \{1,-1\}$. We define two labelings $\varepsilon \oplus_{\pm 1} \delta$ of $E(P \oplus Q)$ as follows: 
\[
(\varepsilon \oplus_1 \delta)(x \lessdot y)=\begin{cases}
\varepsilon(x \lessdot y) &\text{ if }x \lessdot y \in E(P), \\
\delta(x \lessdot y) &\text{ if }x \lessdot y \in E(Q), \\
1 &\text{ if }x \in P, \; y \in Q, 
\end{cases}\]
and
\[(\varepsilon \oplus_{-1} \delta)(x\lessdot y)=\begin{cases}
\varepsilon(x \lessdot y) &\text{ if }x \lessdot y \in E(P), \\
\delta(x \lessdot y) &\text{ if }x \lessdot y \in E(Q), \\
-1 &\text{ if }x \in P, \; y \in Q. 
\end{cases}\]

We will sometimes write $P \oplus_{1}Q$ (respectively, $P \oplus_{-1}Q$) to denote the ordinal sum $P \oplus Q$ equipped with the edge labeling $\varepsilon \oplus_{1} \delta$ (respectively, $\varepsilon \oplus_{-1} \delta$). 
\end{Definition}

\begin{Remark} \label{rem:parity-graded}
Br\"and\'en noted in \cite[Theorem 2.5]{branden04} that a poset $P$ is $\varepsilon$-graded (respectively, $\varepsilon$-consistent) for some edge labeling $\varepsilon\colon E(P) \to \{1,-1\}$ if and only if $P$ is \emph{parity-graded} (respectively, \emph{parity-consistent}), i.e., all maximal saturated chains in $P$ have the same number of edges modulo $2$ (respectively, the order ideal $P_{\leq q}$ is parity-graded for every choice of $q \in P$). If $P$ is parity-graded, one can define the edge labeling $\varepsilon_{\mathrm{par}}$ by requiring that $\varepsilon_{\mathrm{par}}(p \lessdot q) = (-1)^{\ell(p)}$, where $\ell(p)$ is the number of edges of any saturated chain starting at a minimal element of $P$ and ending at $p$. The rank function $\rho_{\mathrm{par}}$ associated with $\varepsilon_{\mathrm{par}}$ takes values in $\{0,1\}$. 
Note that the edge labeling $\varepsilon_{\mathrm{par}}$ corresponds to a canonical vertex labeling (see \cite{branden04}). 

Moreover, one has that $\mathrm{Aut}(P,\varepsilon_{\mathrm{par}}) = \mathrm{Aut}(P)$; indeed, if $\varphi \in \mathrm{Aut}(P)$, then $\varphi$ preserves the length of any maximal saturated chain, and thus for every $p \lessdot q \in E(P)$ one has that \[\varepsilon_{\mathrm{par}}(p\lessdot q) = (-1)^{\ell(p)} = (-1)^{\ell(\varphi(p))} = \varepsilon_{\mathrm{par}}(\varphi(p) \lessdot \varphi(q)).\]
As a consequence, if $P$ is $\varepsilon$-graded, then any element of $\mathrm{Aut}(P,\varepsilon)$ also preserves the parity-grading $\varepsilon_{\mathrm{par}}$.
\end{Remark}

\subsection{Quotient posets}

We begin this subsection by showing that the $\varepsilon$-consistent and $\varepsilon$-graded properties are transferred when passing from $P$ to its quotient by a subgroup of $\mathrm{Aut}(P,\varepsilon)$.

\begin{Lemma} \label{lem:sign-graded quotient}
Let $P$ be an $\varepsilon$-consistent (respectively, $\varepsilon$-graded) poset and let $G$ be a subgroup of $\mathrm{Aut}(P,\varepsilon)$. For any covering relation $\mathcal{O} \lessdot \mathcal{O'}$ in the quotient poset $P/G$, define \[\overline{\varepsilon}(\mathcal{O} \lessdot \mathcal{O}') := \varepsilon(p \lessdot p'),\] where $p$ and $p'$ are any two elements of $P$ such that $p \in \mathcal{O}$, $p' \in \mathcal{O}'$ and $p \lessdot p'$. Then:
\begin{enumerate}[label=(\roman*)]
\item the quotient poset $P/G$ is $\overline\varepsilon$-consistent (respectively, $\overline\varepsilon$-graded);
\item for every $\mathcal{O} \in P/G$, one has that $\rho_{P/G}(\mathcal{O}) = \rho_P(p)$ for any choice of $p \in \mathcal{O}$.
\end{enumerate}
\end{Lemma}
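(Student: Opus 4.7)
The strategy is to descend the rank function $\rho_P$ of $(P,\varepsilon)$ to a function $\bar\rho\colon P/G \to \ZZ$, and then deduce everything from this. Since every $\psi \in \mathrm{Aut}(P,\varepsilon)$ preserves $\rho_P$ (third of the bulleted observations preceding the ordinal-sum definition), the prescription $\bar\rho(\mathcal{O}):= \rho_P(p)$, for any $p \in \mathcal{O}$, is a well-defined map. This is precisely the content of part (ii), provided that we eventually identify $\bar\rho$ with $\rho_{P/G}$; the remainder of the proof establishes (i) and simultaneously secures this identification.

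Next, I would deal with the well-definedness of $\overline\varepsilon$. Given a cover $\mathcal{O} \lessdot \mathcal{O}'$ in $P/G$, pick $p \in \mathcal{O}$ and $p' \in \mathcal{O}'$ with $p < p'$, take any saturated chain $p = p_0 \lessdot p_1 \lessdot \cdots \lessdot p_k = p'$ in $P$, and project to $P/G$. Consecutive $p_i$ differ in rank by $\pm 1$ (second bulleted observation), hence their orbits are distinct; this yields a strict chain of length $k$ in $P/G$ from $\mathcal{O}$ to $\mathcal{O}'$. Because $\mathcal{O} \lessdot \mathcal{O}'$, necessarily $k = 1$, so at least one lift $p \lessdot p'$ in $P$ exists. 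For any such lift, $\varepsilon(p \lessdot p') = \rho_P(p') - \rho_P(p) = \bar\rho(\mathcal{O}') - \bar\rho(\mathcal{O})$, which depends only on $\mathcal{O}$ and $\mathcal{O}'$; hence $\overline{\varepsilon}$ is well-defined and satisfies $\overline\varepsilon(\mathcal{O} \lessdot \mathcal{O}') = \bar\rho(\mathcal{O}') - \bar\rho(\mathcal{O})$.

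To establish (i), I would telescope. Fix $\mathcal{O} \in P/G$ and any maximal saturated chain $\mathcal{O}_0 \lessdot \cdots \lessdot \mathcal{O}_m = \mathcal{O}$ in $(P/G)_{\leq \mathcal{O}}$. The identity just proved gives $\sum_{i=0}^{m-1} \overline\varepsilon(\mathcal{O}_i \lessdot \mathcal{O}_{i+1}) = \bar\rho(\mathcal{O}) - \bar\rho(\mathcal{O}_0)$. One checks that minimal elements of $(P/G)_{\leq \mathcal{O}}$ are minimal in $P/G$ (by transitivity), and that minimal orbits in $P/G$ are precisely orbits of minimal elements of $P$ (since the finite-order $G$-action cannot strictly decrease any $p$ to another element of its own orbit, so nothing strictly smaller than a minimal orbit can exist in the quotient). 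For any such $\mathcal{O}_0$, one has $\bar\rho(\mathcal{O}_0) = 0$, so the sum above equals $\bar\rho(\mathcal{O})$, independent of the chain. This simultaneously proves $\overline\varepsilon$-consistency of $P/G$ and identifies $\rho_{P/G}(\mathcal{O}) = \bar\rho(\mathcal{O}) = \rho_P(p)$, giving (ii). For the $\varepsilon$-graded case, a parallel argument shows that maximal elements of $P/G$ correspond to orbits of maximal elements of $P$, all of whose $\bar\rho$-values equal $r_P(\varepsilon)$; hence $P/G$ is $\overline\varepsilon$-graded with $r_{P/G}(\overline\varepsilon) = r_P(\varepsilon)$.

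The main obstacle is the bookkeeping around cover relations in $P/G$: one must ensure both that $\overline\varepsilon$ is genuinely well-defined (different lifts of the same cover could a priori give different $\varepsilon$-values) and that cover relations in $P/G$ always admit lifts to cover relations in $P$ (rather than merely to longer saturated chains). Both issues are dissolved at once by the $\varepsilon$-consistency of $P$: it forces covers to change rank by $\pm 1$, which both prevents collapse of consecutive orbits under the quotient map and pins down $\varepsilon(p \lessdot p')$ as the rank difference alone.
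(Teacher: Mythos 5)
Your argument is correct, but it follows a genuinely different route from the paper's. The paper proves well-definedness of $\overline\varepsilon$ by contradiction: two lifts of the same quotient cover with different signs are spliced (via an element $h \in G$) into two maximal saturated chains of $P$ whose $\varepsilon$-sums disagree, contradicting $\varepsilon$-consistency; it then proves (i) and (ii) by lifting a maximal saturated chain of $P/G$ step by step to a saturated chain of $P$ whose $i$-th element lies in the $i$-th orbit, so that $\overline\varepsilon(\widetilde{\mathcal{C}}) = \varepsilon(\mathcal{C})$. You instead push the rank function down to $\bar\rho(Gp) := \rho_P(p)$ (well defined since $G \subseteq \mathrm{Aut}(P,\varepsilon)$ preserves $\rho_P$), show that every cover of $P/G$ lifts to a cover of $P$ (the rank-difference argument along a projected saturated chain), deduce $\overline\varepsilon(\mathcal{O} \lessdot \mathcal{O}') = \bar\rho(\mathcal{O}') - \bar\rho(\mathcal{O})$ -- which gives well-definedness with no contradiction argument -- and then obtain consistency, gradedness and (ii) by telescoping, once you know that minimal (respectively maximal) orbits consist of minimal (respectively maximal) elements of $P$, which is where finiteness of $G$ enters. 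Your route buys two things: it never lifts whole chains, and it makes explicit two points the paper leaves implicit, namely that quotient covers admit lifts to covers of $P$ (needed even for $\overline\varepsilon$ to be defined on every edge of $P/G$) and the minimal/maximal-orbit facts that are also tacitly required for the paper's lifted chain to be maximal. The paper's route, in exchange, works purely at the level of chains and the definition of $\varepsilon$-consistency, without invoking the structure of minimal and maximal orbits for the well-definedness step. One small stylistic caveat: your parenthetical justification that minimal orbits consist of minimal elements is compressed -- the clean statement is that if $q <_P p$ with $Gq = Gp$ one gets $gp <_P p$ and iterating $g$ up to its finite order yields $p <_P p$, so for $p$ in a minimal orbit no $q <_P p$ can exist at all -- but the ingredient you cite is exactly the right one.
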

\begin{proof}
Let us first check that $\overline{\varepsilon}$ is well-defined. Assume by contradiction it's not. Then there exist $p, q \in \mathcal{O}$, $p', q' \in \mathcal{O'}$ such that $p \lessdot p'$, $q \lessdot q'$ and $\varepsilon(p \lessdot p') \neq \varepsilon(q \lessdot q')$. Since $p'$ and $q'$ lie in the same $G$-orbit, there exists $g \in G$ such that $q' = gp'$; since $g^{-1}$ preserves $\varepsilon$, one has that \[\varepsilon(q \lessdot q') = \varepsilon(q \lessdot gp') = \varepsilon(g^{-1}q \lessdot p') = \varepsilon(hp \lessdot p')\] for some $h \in G$. Thus, we have that $\varepsilon(p \lessdot p') \neq \varepsilon(hp \lessdot p')$. 

Let \[\mathcal{C}\colon u_0 \lessdot u_1 \lessdot \ldots \lessdot u_k = p \lessdot u_{k+1} = p' \lessdot u_{k+2} \lessdot \ldots \lessdot u_m\] be a maximal saturated chain of $P$ and consider \[\mathcal{C}'\colon v_0 \lessdot v_1 \lessdot \ldots \lessdot v_m,\] where \[v_i = \begin{cases}hu_i & 0 \leq i \leq k,\\ u_i & k+1 \leq i \leq m.\end{cases}\]
Then $\mathcal{C}'$ is also a maximal saturated chain of $P$ and one has that \[\begin{split}\sum_{i=0}^{m-1}\varepsilon(v_i \lessdot v_{i+1}) &= \sum_{i=0}^{k-1}\varepsilon(hu_i \lessdot hu_{i+1}) + \varepsilon(hp \lessdot p') + \sum_{i=k+1}^{m-1}\varepsilon(u_i \lessdot u_{i+1})\\ &= \sum_{i=0}^{k-1}\varepsilon(u_i \lessdot u_{i+1}) + \varepsilon(hp \lessdot p') + \sum_{i=k+1}^{m-1}\varepsilon(u_i \lessdot u_{i+1})\\ &\neq \sum_{i=0}^{m-1}\varepsilon(u_i \lessdot u_{i+1}),\end{split}\]
against the hypothesis that $P$ is $\varepsilon$-consistent.

Let us now prove claim (i) when $P$ is $\varepsilon$-graded (the proof for the $\varepsilon$-consistent case is analogous). Consider a maximal saturated chain $\widetilde{\mathcal{C}}\colon \mathcal{O}_0 \lessdot \mathcal{O}_1 \lessdot \ldots \lessdot \mathcal{O}_m$ in $P/G$. Since $\mathcal{O}_0 \lessdot \mathcal{O}_1$, we know that there exist $p_0 \in \mathcal{O}_0$ and $p_1 \in \mathcal{O}_1$ such that $p_0 \lessdot p_1$; by definition, $\overline{\varepsilon}(\mathcal{O}_0 \lessdot \mathcal{O}_1) = \varepsilon(p_0 \lessdot p_1)$. Now consider $\mathcal{O}_1 \lessdot \mathcal{O}_2$. There exist $q_1 \in \mathcal{O}_1$ and $q_2 \in \mathcal{O}_2$ such that $q_1 \lessdot q_2$; since $p_1$ and $q_1$ lie in the same $G$-orbit $\mathcal{O}_1$, then $q_1 = gp_1$ for some $g \in G$. Since $g^{-1} \in \mathrm{Aut}(P,\varepsilon)$, we have that $p_1 \lessdot g^{-1}q_2$ and $\overline{\varepsilon}(\mathcal{O}_1 \lessdot \mathcal{O}_2) = \varepsilon(q_1 \lessdot q_2) = \varepsilon(p_1 \lessdot p_2)$, where $p_2 = g^{-1}q_2$. Continuing in this fashion, we build a maximal saturated chain $\mathcal{C}\colon p_0 \lessdot p_1 \lessdot p_2 \lessdot \ldots \lessdot p_m$ such that $p_i \in \mathcal{O}_i$ for every $i$. Since $\overline{\varepsilon}(\widetilde{\mathcal{C}}) = \varepsilon(\mathcal{C})$, it follows that $P/G$ inherits the sign-graded structure from $P$, as desired. Claim (ii) also follows from this line of reasoning.
\end{proof}

\subsection{Saturations and their interaction with quotient posets}

We now introduce a notion, due to Br\"and\'en \cite[Section 3]{branden04}, that will be crucial for what follows.

\begin{Definition}
    Let $P$ be an $\varepsilon$-consistent poset and let $Q$ be a $\delta$-consistent poset. We say that $(Q,\delta)$ is a \emph{saturation} of $(P,\varepsilon)$ if:
    \begin{itemize}
    \item $Q = P$ as sets;
    \item for any $x, y \in Q$ such that $|\rho_Q(y) - \rho_Q(x)| = 1$, one has that $x$ and $y$ are comparable in $Q$ (``$(Q,\delta)$ is saturated'');
    \item whenever $x <_P y$, one also has that $x <_Q y$ (``$Q$ extends $P$'');
    \item $\rho_Q(x) = \rho_P(x)$ for any $x$.
    \end{itemize}
\end{Definition}

We now want to bring in a new ingredient; namely, the fact that acting on a saturation of $(P,\varepsilon)$ by an element $g \in \mathrm{Aut}(P,\varepsilon)$ produces another saturation of $(P,\varepsilon)$. As a consequence, $g$ permutes the set $\mathcal{Q}$ of saturations of $(P,\varepsilon)$.

If $(Q,\delta)$ is a saturation of $(P,\varepsilon)$ and $g \in \mathrm{Aut}(P,\varepsilon)$, we define $(gQ,g\delta)$ in the following way:
\begin{itemize}
\item $gQ = P$ as sets;
\item $x <_{gQ} y$ if $g^{-1}x <_Q g^{-1}y$ (and thus $x \lessdot_{gQ} y$ precisely when $g^{-1}x \lessdot_{Q} g^{-1}y$);
\item $g\delta(x \lessdot y) = \delta(g^{-1}x \lessdot g^{-1}y)$.
\end{itemize}

\begin{Lemma} \label{lem:G-action on saturations}
Let $P$ be an $\varepsilon$-consistent poset, let $(Q,\delta)$ be a saturation of $(P,\varepsilon)$ and let $g \in \mathrm{Aut}(P,\varepsilon)$. Then $(gQ,g\delta)$ is a saturation of $(P,\varepsilon)$. 
\end{Lemma}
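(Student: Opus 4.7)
The plan is to verify the four defining conditions of a saturation for $(gQ, g\delta)$ one by one, leveraging the fact that $g^{-1}$ provides a poset isomorphism from $gQ$ to $Q$ that is compatible with the edge labelings.

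First I would set up the key observation: by construction, the map $g^{-1}\colon gQ \to Q$ is an isomorphism of posets (since $x <_{gQ} y \Leftrightarrow g^{-1}x <_Q g^{-1}y$), and for every covering relation $x \lessdot_{gQ} y$ one has $g\delta(x \lessdot y) = \delta(g^{-1}x \lessdot g^{-1}y)$. From this it is immediate that maximal saturated chains in $(gQ)_{\leq y}$ are in bijection with maximal saturated chains in $Q_{\leq g^{-1}y}$ (via $g^{-1}$) and that both carry the same label sum. Since $Q$ is $\delta$-consistent, this shows $gQ$ is $g\delta$-consistent, and moreover $\rho_{gQ}(x) = \rho_Q(g^{-1}x)$ for every $x$.

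With this in hand the remaining checks are short. For the rank condition $\rho_{gQ}(x) = \rho_P(x)$: using the formula above together with the fact that $(Q,\delta)$ is a saturation of $(P,\varepsilon)$ gives $\rho_{gQ}(x) = \rho_Q(g^{-1}x) = \rho_P(g^{-1}x)$, and since $g \in \mathrm{Aut}(P,\varepsilon)$ preserves the rank function of $(P,\varepsilon)$ (as recorded in the bulleted observations preceding Definition on ordinal sums), this equals $\rho_P(x)$. For the extension property $x <_P y \Rightarrow x <_{gQ} y$: because $g^{-1}$ is in $\mathrm{Aut}(P)$, we have $g^{-1}x <_P g^{-1}y$, hence $g^{-1}x <_Q g^{-1}y$ since $Q$ extends $P$, which by definition gives $x <_{gQ} y$. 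Finally for saturation: if $|\rho_{gQ}(y) - \rho_{gQ}(x)| = 1$, then $|\rho_Q(g^{-1}y) - \rho_Q(g^{-1}x)| = 1$, so $g^{-1}x$ and $g^{-1}y$ are comparable in $Q$; applying $g$ (which is an order isomorphism $Q \to gQ$) transfers this comparability to $x$ and $y$ in $gQ$.

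There is no real obstacle here: the whole argument is essentially a transport-of-structure statement. The only place that uses anything beyond bare definitions is the rank-preservation step, which relies on the fact that elements of $\mathrm{Aut}(P,\varepsilon)$ preserve $\rho_P$; this is itself an easy consequence of $\varepsilon$-consistency applied to the image under $g$ of a maximal saturated chain of $P_{\leq x}$.
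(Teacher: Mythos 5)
Your proposal is correct and follows essentially the same route as the paper's proof: both verify the four defining conditions by transporting structure along the poset isomorphism $g^{-1}\colon gQ \to Q$, using that $Q$ is a saturation and that elements of $\mathrm{Aut}(P,\varepsilon)$ preserve $\rho_P$. Your write-up is slightly more explicit than the paper on why $gQ$ is $g\delta$-consistent, but there is no substantive difference.
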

\begin{proof}
The poset $gQ$ is $g\delta$-consistent since $Q$ is $\delta$-consistent; moreover, $gQ = P$ by definition. If $x, y \in gQ$ are such that $|\rho_{gQ}(y) - \rho_{gQ}(x)|= 1$, then $|\rho_Q(g^{-1}y) - 
\rho_Q(g^{-1}x)| = 1$, which implies that $g^{-1}y$ and $g^{-1}x$ are comparable in $Q$, since $Q$ is saturated. This means precisely that $y$ and $x$ are comparable in $gQ$.

Let us now see that $gQ$ extends $P$. If $x <_P y$, then $g^{-1}x <_P g^{-1}y$ since $g^{-1}$ is an automorphism of $P$; since $Q$ extends $P$, one then has that $g^{-1}x <_Q g^{-1}y$ and thus $x <_{gQ} y$.

Finally, let us prove that $\rho_{gQ}(x) = \rho_P(x)$ for any $x$. By definition, $\rho_{gQ}(x) = \rho_Q(g^{-1}x)$, which in turn equals $\rho_P(g^{-1}x)$ since $(Q,\delta)$ is a saturation of $(P,\varepsilon)$. Since $g^{-1} \in \mathrm{Aut}(P,\varepsilon)$, one also has that $\rho_P(g^{-1}x) = \rho_P(x)$, which concludes the proof.
\end{proof}

One might wonder which elements of $\mathrm{Aut}(P,\varepsilon)$ leave $(Q,\delta)$ fixed, i.e., who the stabilizer of $(Q,\delta)$ is. The stabilizer $\mathrm{stab}_G(Q,\delta)$ turns out to consist precisely of the elements of $\mathrm{Aut}(P,\varepsilon)$ that also belong to $\mathrm{Aut}(Q,\delta)$.

\begin{Lemma} \label{lem:stabilizer}
    Let $P$ be an $\varepsilon$-consistent poset, let $G$ be a subgroup of $\mathrm{Aut}(P,\varepsilon)$ and consider the action of $G$ on the set $\mathcal{Q}$ of saturations of $(P,\varepsilon)$. Then, for any $(Q,\delta) \in \mathcal{Q}$, one has that 
    \[\mathrm{stab}_G(Q,\delta) = G \cap \mathrm{Aut}(Q,\delta).\]
\end{Lemma}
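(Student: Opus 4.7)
The plan is to prove the claimed equality by double inclusion, with each direction amounting to unwinding the definition of the $G$-action on $\mathcal{Q}$ given just before the statement.

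First I would observe that by definition $g \in \mathrm{stab}_G(Q,\delta)$ precisely when $g \in G$ and the ordered pair $(gQ,g\delta)$ coincides with $(Q,\delta)$; since $gQ = Q = P$ as sets, this equality reduces to two conditions: (i) the partial orders $<_{gQ}$ and $<_Q$ agree, and (ii) the edge labelings $g\delta$ and $\delta$ agree on $E(Q)$.

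For the inclusion $\mathrm{stab}_G(Q,\delta) \subseteq G \cap \mathrm{Aut}(Q,\delta)$, I would take $g \in \mathrm{stab}_G(Q,\delta)$. Condition (i) above reads $x <_Q y \Leftrightarrow g^{-1}x <_Q g^{-1}y$, which says that $g^{-1}$ (and hence $g$) is an order automorphism of $Q$. Condition (ii) reads $\delta(x \lessdot y) = \delta(g^{-1}x \lessdot g^{-1}y)$ for every $x \lessdot y \in E(Q)$; combined with condition (i) this says that $g$ is a covering-preserving automorphism of $Q$ that respects $\delta$, i.e., $g \in \mathrm{Aut}(Q,\delta)$. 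Since $g \in G$ by assumption, this gives the first inclusion.

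For the reverse inclusion, I would take $g \in G \cap \mathrm{Aut}(Q,\delta)$ and check that $(gQ,g\delta) = (Q,\delta)$. Because $g \in \mathrm{Aut}(Q)$, the equivalence $g^{-1}x <_Q g^{-1}y \Leftrightarrow x <_Q y$ holds, so $<_{gQ}$ and $<_Q$ coincide, giving condition (i); and because $g \in \mathrm{Aut}(Q,\delta)$, for every covering $x \lessdot y$ in $Q$ (equivalently, in $gQ$) we have $g\delta(x \lessdot y) = \delta(g^{-1}x \lessdot g^{-1}y) = \delta(x \lessdot y)$, giving condition (ii). Hence $g$ fixes $(Q,\delta)$, so $g \in \mathrm{stab}_G(Q,\delta)$.

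Since the argument is essentially a definition chase, there is no genuine obstacle; the only thing to be careful about is to keep straight the distinction between $g$ acting on elements of $P$ (inherited from the ambient automorphism group $\mathrm{Aut}(P,\varepsilon)$) and $g$ acting on the whole poset-with-labeling $(Q,\delta)$, and to use that $g \in \mathrm{Aut}(P,\varepsilon)$ so that $g^{\pm 1}$ is a bijection on the underlying set in a coherent way.
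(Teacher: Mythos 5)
Your proposal is correct and follows essentially the same definition-chasing argument as the paper: both unwind the condition $(gQ,g\delta)=(Q,\delta)$ into agreement of the partial orders (equivalent to $g \in \mathrm{Aut}(Q)$) and agreement of the labelings (equivalent, given the former, to $g \in \mathrm{Aut}(Q,\delta)$), the only cosmetic difference being that you organize it as a double inclusion rather than a chain of equivalences.
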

\begin{proof}
    We are looking for those elements $g \in G$ such that $(gQ,g\delta) = (Q,\delta)$. One has by definition that $gQ = Q$ as sets. As regards the poset structure, one has that $(gQ, <_{gQ}) = (Q,<_Q)$ if and only if $g \in G \cap \mathrm{Aut}(Q)$. Finally, if $g \in G \cap \mathrm{Aut}(Q)$, one has that $g\delta = \delta$ if and only if $g \in \mathrm{Aut}(Q,\delta)$.
\end{proof}
    
\begin{Remark} \label{rem:nice saturations}
     By \cite[Proposition 3.4]{branden04} one has that, if $P$ is parity-graded, each saturation of $P$ is of the form $A_0 \oplus_{1} A_1 \oplus_{-1} \ldots \oplus_{\pm 1} A_k$, where each $A_i$ is an antichain, and is hence parity-graded. Denote such a saturation by $(Q,\varepsilon_{\mathrm{par}})$. The underlying poset $Q$, being the ordinal sum $A_0 \oplus A_1 \oplus \ldots \oplus A_k$, is also $\mathbf{1}$-graded, and one has that \[\mathrm{Aut}(Q,\varepsilon_{\mathrm{par}}) = \mathrm{Aut}(Q,\mathbf{1}) = \mathrm{Aut}(Q) = \Sf_{|A_0|} \times \Sf_{|A_1|} \times \ldots \times \Sf_{|A_k|}.\]
\end{Remark}

\begin{Lemma} \label{lem:all covering relations}
Let $Q$ be a $\delta$-consistent saturated poset and let $G$ be a subgroup of $\mathrm{Aut}(Q,\delta)$. Then:
\begin{enumerate}[label=(\roman*)]
\item if $q \lessdot q'$, then $gq \lessdot hq'$ for all $g,h\in G$;
\item if $q \lessdot q'$, then $\delta(q \lessdot q') = \delta(gq \lessdot hq')$ for all $g,h\in G$;
\item if $q \leq q'$, then $gq \leq hq'$ for all $g,h \in G$.
\end{enumerate}
\end{Lemma}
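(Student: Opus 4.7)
The plan rests on one simple auxiliary observation: for any $p \in Q$ and $g \in G$, one cannot have $p < gp$ in $Q$. Indeed, if $p < gp$, iterating the automorphism $g$ would produce an infinite strictly ascending chain $p < gp < g^2p < \cdots$ inside the finite orbit $Gp$, a contradiction. In particular, two distinct elements of a single $G$-orbit must be incomparable in $Q$.

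For (i), fix the cover $q \lessdot q'$ and let $g, h \in G$; assume without loss of generality that $\rho(q') = \rho(q) + 1$ (the opposite case is symmetric). Since the $G$-action preserves the rank function of $Q$, one has $\rho(gq) = \rho(q)$ and $\rho(hq') = \rho(q) + 1$, so by saturation $gq$ and $hq'$ are comparable. Should $hq' < gq$ hold, acting by $g^{-1}$ would yield $(g^{-1}h)q' < q$, and chaining with the given $q < q'$ would give $(g^{-1}h)q' < q'$---contradicting the auxiliary observation applied to the two distinct elements $(g^{-1}h)q'$ and $q'$ of the orbit $Gq'$. Hence $gq < hq'$. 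To upgrade this strict inequality to a covering relation, I would invoke the structural description recalled in Remark~\ref{rem:nice saturations} (originally \cite[Proposition 3.4]{branden04}): $Q$ decomposes as an ordinal sum of antichains $A_0 \oplus_{\epsilon_0} A_1 \oplus \cdots \oplus_{\epsilon_{k-1}} A_k$ whose covering relations are precisely the pairs from consecutive antichains. By rigidity of the ordinal sum, every element of $\mathrm{Aut}(Q,\delta)$ permutes each antichain $A_j$ within itself; therefore $gq$ lies in the same antichain as $q$ and $hq'$ lies in the same antichain as $q'$, so $gq \lessdot hq'$ is indeed a cover in $Q$.

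Claim (ii) is then immediate: as $gq \lessdot hq'$ is a cover in the $\delta$-consistent poset $Q$, one has $\delta(gq \lessdot hq') = \rho(hq') - \rho(gq) = \rho(q') - \rho(q) = \delta(q \lessdot q')$.

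Claim (iii) follows from (i): for $q < q'$, pick any saturated chain $q = q_0 \lessdot q_1 \lessdot \cdots \lessdot q_k = q'$ in $Q$ and apply (i) along each cover with identity elements at the intermediate vertices to obtain the chain $gq \lessdot q_1 \lessdot \cdots \lessdot q_{k-1} \lessdot hq'$, hence $gq \leq hq'$. The main obstacle is the cover step of (i): the directional part is an elementary consequence of orbit finiteness, but ruling out an intermediate element $z$ with $gq < z < hq'$ essentially requires the Br\"and\'en-form structural description of saturated $\delta$-consistent posets. A direct combinatorial argument avoiding that structural input would require a delicate case analysis on chain-length parities modulo~$2$.
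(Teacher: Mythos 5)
Your directional argument (comparability of $gq$ and $hq'$ via rank-preservation and saturation, and the exclusion of $hq' < gq$ by finiteness of the orbit) is sound and close in spirit to the paper's own finite-order argument, and your deductions of (ii) and (iii) from (i) match the paper's. The genuine gap is exactly the step you flagged: upgrading $gq < hq'$ to a covering relation by invoking \Cref{rem:nice saturations}. That remark (and Br\"and\'en's Proposition 3.4 behind it) describes only saturations of \emph{parity-graded} posets with the labeling $\varepsilon_{\mathrm{par}}$, whose rank function takes values in $\{0,1\}$; the lemma, by contrast, concerns an arbitrary $\delta$-consistent saturated poset $Q$, whose rank function is integer-valued, and such a $Q$ need not be an ordinal sum of antichains. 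For example, let $Q=\{m,c_1,c_2,w,v,u\}$ with covering relations $m \lessdot c_1$, $c_1 \lessdot c_2$, $c_2 \lessdot w$, $c_1 \lessdot v$, $c_2 \lessdot u$, $v \lessdot u$, labeled $+1,+1,+1,-1,-1,+1$ respectively. Then $\rho(m)=\rho(v)=0$, $\rho(c_1)=\rho(u)=1$, $\rho(c_2)=2$, $\rho(w)=3$; every pair of elements whose ranks differ by one is comparable and every principal order ideal is graded, so $Q$ is $\delta$-consistent and saturated (and is a saturation of itself). Yet $u$ and $w$ are incomparable while $v<u$ and $v \not< w$, so incomparable elements have different down-sets and $Q$ is not an ordinal sum of antichains. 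Hence the structural input you rely on is unavailable in the generality in which the lemma is stated -- and needed, since it feeds into \Cref{cor:quotient of a saturated poset} and \Cref{lem:saturations of a quotient} for arbitrary $\varepsilon$-consistent posets, not just parity-graded ones.

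The missing step can be closed elementarily, which is what the paper does and does not require any parity or case analysis on chain lengths. First reduce to showing $q \lessdot gq'$ for all $g \in G$, since $hq \lessdot h'q'$ if and only if $q \lessdot h^{-1}h'q'$. Saturation and rank-preservation make $q$ and $gq'$ comparable; the finite order of $g$ shows $gq' \leq q$ forces $gq' = q'$. If instead $q < gq'$, choose $r$ with $q \lessdot r \leq gq'$; then $|\rho(r)-\rho(gq)|=1$, so $r$ and $gq$ are comparable by saturation. If $r \leq gq$ then $q \leq gq$, whence $q = gq$ by finiteness of the order of $g$, and $q = gq \lessdot gq'$; if $gq < r \leq gq'$, then $gq \lessdot gq'$ (the image of a cover under an automorphism) forces $r = gq'$, so again $q \lessdot gq'$. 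Replacing your ordinal-sum step with this argument repairs the proof.
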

\begin{proof}
    Claim (iii) readily follows from claim (i).
    
    To prove claim (i), it is enough to show that, for all $q \lessdot q' \in E(Q)$ and $g \in G$, one has that $q \lessdot gq'$. (Indeed, if $h, h' \in G$, then $hq \lessdot h'q'$ if and only if $q \lessdot h^{-1}h'q'$.)

    If $q \lessdot q'$, then $|\rho(q')-\rho(q)| = 1$. Since $g \in \mathrm{Aut}(Q,\delta)$, one has that $\rho(gq') = \rho(q')$ and thus $|\rho(gq') - \rho(q)| = 1$. Since $Q$ is saturated, it follows that $gq'$ and $q$ must be comparable.
    
    If $gq' \leq q$, then $gq' \leq q'$; repeated applications of $g$ then yield that $q' \geq gq' \geq g^2q' \geq \ldots$. Since $\mathrm{Aut}(Q,\delta)$ is a finite group, there exists $N$ such that $g^N = \mathrm{id}_G$. Hence, $q' \geq gq' \geq g^Nq' = q'$ and so $q' = gq'$, which in particular implies that $q \lessdot gq'$.

    Assume now that $q < gq'$. Then there exists $r \in Q$ such that $q \lessdot r \leq gq'$. Since $q \lessdot r$ and $g$ preserves the rank function, one has that $1 = |\rho(r)-\rho(q)| = |\rho(r)-\rho(gq)|$. Since $Q$ is saturated, it follows that $r$ and $gq$ must be comparable. If $r \leq gq$, then $q \leq gq$ and reasoning as above yields that $q = gq$, and thus $q = gq \lessdot gq'$ as desired. If instead $gq < r (\leq gq')$, then the fact that $gq \lessdot gq'$ implies that $r = gq'$ and thus $q \lessdot gq'$.

    Finally, let us prove claim (ii). Let $q \lessdot q' \in E(Q)$ and let $g, h \in G$. Since $gq \lessdot hq'$ from claim (i), it follows that $\delta(gq \lessdot hq') = \rho(hq') - \rho(gq) = \rho(q') - \rho(q) = \delta(q \lessdot q')$.
\end{proof}

\begin{Corollary} \label{cor:quotient of a saturated poset}
    Let $Q$ be a $\delta$-consistent saturated poset and let $G$ be a subgroup of $\mathrm{Aut}(Q,\delta)$. Build $(Q/G,\overline{\delta})$ as in \Cref{lem:sign-graded quotient}. Then, for any $\mathcal{O}, \mathcal{O'} \in Q/G$ and for any choice of representatives $q \in \mathcal{O}$, $q' \in \mathcal{O}'$, one has that:
    \begin{enumerate}[label=(\roman*)]
        \item $\mathcal{O} \lessdot_{Q/G} \mathcal{O'}$ if and only if $q \lessdot_{Q} q'$;
        \item when the equivalent conditions in (i) are met, then $\overline{\delta}(\mathcal{O} \lessdot \mathcal{O'}) = \delta(q \lessdot q')$.
    \end{enumerate}
\end{Corollary}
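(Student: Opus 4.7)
The plan is to prove claim (i) first and then deduce claim (ii) essentially for free from the definition of $\overline{\delta}$ in \Cref{lem:sign-graded quotient}.

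For the forward direction of (i), I would start from $\mathcal{O} \lessdot_{Q/G} \mathcal{O}'$ and pick some representatives $q_0 \in \mathcal{O}$, $q_0' \in \mathcal{O}'$ with $q_0 \leq_Q q_0'$ (such representatives exist by the definition of the quotient order, and necessarily satisfy $q_0 <_Q q_0'$ since $\mathcal{O} \neq \mathcal{O}'$). I would then take a saturated chain $q_0 = r_0 \lessdot r_1 \lessdot \cdots \lessdot r_n = q_0'$ in $Q$. Since $Q$ is $\delta$-consistent, every covering $r_i \lessdot r_{i+1}$ satisfies $|\rho(r_{i+1}) - \rho(r_i)| = 1$, so (using that $G$ preserves the rank function) the orbits $[r_i]$ and $[r_{i+1}]$ have distinct ranks, hence are distinct in $Q/G$. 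This produces a strictly increasing chain $\mathcal{O} = [r_0] < [r_1] < \cdots < [r_n] = \mathcal{O}'$ in $Q/G$; the covering hypothesis forces $n = 1$, so $q_0 \lessdot_Q q_0'$. Any other pair of representatives $q, q'$ satisfies $q = gq_0$ and $q' = hq_0'$ for some $g, h \in G$, and \Cref{lem:all covering relations}(i) then yields $q \lessdot_Q q'$ at once.

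For the backward direction of (i), I suppose $q \lessdot_Q q'$ for our chosen representatives. The quotient order immediately gives $\mathcal{O} \leq \mathcal{O}'$, and they are distinct since their ranks differ. Assume for contradiction that some orbit $\mathcal{R}$ satisfies $\mathcal{O} < \mathcal{R} < \mathcal{O}'$. Using the fact that the $G$-action preserves the order on $Q$, I can translate representatives via the group: there exists $r \in \mathcal{R}$ with $q <_Q r$ (starting from any witness $q_1 <_Q r_1$ and applying a suitable $g$), and similarly, after composing with another element of $G$, some $q_2' \in \mathcal{O}'$ with $r <_Q q_2'$. Hence $q < r < q_2'$ strictly in $Q$. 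But by \Cref{lem:all covering relations}(i), the single relation $q \lessdot q'$ upgrades to a covering $q \lessdot_Q q_2'$, contradicting the existence of the strictly intermediate element $r$.

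Finally, for (ii), once (i) is established, any pair $q \lessdot q'$ with $q \in \mathcal{O}$, $q' \in \mathcal{O}'$ is a valid witness in the definition of $\overline{\delta}(\mathcal{O} \lessdot \mathcal{O}')$ given in \Cref{lem:sign-graded quotient}, so $\overline{\delta}(\mathcal{O} \lessdot \mathcal{O}') = \delta(q \lessdot q')$.

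The main obstacle I anticipate is the group-theoretic bookkeeping in the backward direction of (i): the crucial point is that \Cref{lem:all covering relations}(i) is very rigid in the saturated setting, promoting a single covering $q \lessdot q'$ to coverings between \emph{all} pairs of representatives across $\mathcal{O}$ and $\mathcal{O}'$, and this rigidity is precisely what prohibits any intermediate orbit in $Q/G$.
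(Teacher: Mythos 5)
Your proof is correct and follows essentially the same route as the paper: claim (ii) is reduced to the well-definedness of $\overline{\delta}$ from \Cref{lem:sign-graded quotient}, and the decisive step for claim (i) is \Cref{lem:all covering relations}(i), which upgrades a covering between \emph{some} representatives to a covering between \emph{all} representatives. The only difference is that you carefully verify the two facts the paper treats as immediate from the definition of $Q/G$ (that a quotient covering admits covering witnesses, via the rank function, and that a covering in $Q$ descends to a covering in $Q/G$, again via \Cref{lem:all covering relations}); this is harmless extra detail rather than a different argument.
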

\begin{proof}
    Due to how $Q/G$ is defined, all we need to prove is claim (i). If $q \lessdot_Q q'$, then $Gq \lessdot_{Q/G} Gq'$ by definition of $Q/G$. If $\mathcal{O} \lessdot_{Q/G} \mathcal{O}'$, then by definition of $Q/G$ there exist representatives $r \in \mathcal{O}$ and $r' \in \mathcal{O'}$ such that $r \lessdot_Q r'$. \Cref{lem:all covering relations}(i) then yields the desired result. 
\end{proof}

\begin{Lemma} \label{lem:saturations of a quotient}
    Let $P$ be an $\varepsilon$-consistent poset and let $G$ be a subgroup of $\mathrm{Aut}(P,\varepsilon)$. Then there is a bijection between the set of all saturations of $(P/G, \overline\varepsilon)$ and the set of those saturations $(Q,\delta)$ of $(P,\varepsilon)$ such that $G \subseteq \mathrm{Aut}(Q,\delta)$.
\end{Lemma}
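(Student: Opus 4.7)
The plan is to construct mutually inverse maps between the two sets. In one direction, I send a $G$-invariant saturation $(Q,\delta)$ of $(P,\varepsilon)$ to the quotient $(Q/G, \overline{\delta})$ built via \Cref{lem:sign-graded quotient}. In the other direction, given a saturation $(R, \mu)$ of $(P/G, \overline\varepsilon)$, I lift it to $(Q, \delta)$ defined by $Q = P$ as sets, with $p <_Q p'$ if and only if $Gp <_R Gp'$, and $\delta(p \lessdot p') := \mu(Gp \lessdot Gp')$ on covering relations.

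The first step is to verify the forward map lands in saturations of $(P/G, \overline\varepsilon)$. By \Cref{lem:sign-graded quotient}, $(Q/G, \overline\delta)$ is sign-consistent with rank function matching that of $(P/G, \overline\varepsilon)$. Saturation of $Q/G$ follows because any two rank-adjacent orbits admit representatives in $Q$ whose ranks differ by $1$, which must then be comparable by saturation of $Q$. That $Q/G$ extends $P/G$ follows from $Q$ extending $P$, using the observation that $p <_P p'$ forces $Gp \neq Gp'$ (otherwise, iterating $p' = gp$ in the finite group $\langle g \rangle$ contradicts antisymmetry).

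Next I verify that the inverse map is well-defined. The delicate point is antisymmetry of $<_Q$: if $p <_Q p'$ and $p \neq p'$, then $Gp <_R Gp'$ strictly, which rules out $p' <_Q p$. Covering relations behave well: $p \lessdot_Q p'$ if and only if $Gp \lessdot_R Gp'$, which makes $\delta$ well-defined. Rank preservation $\rho_Q = \rho_P$ follows from $\rho_R = \rho_{P/G}$ together with \Cref{lem:sign-graded quotient}; saturation of $Q$, the extension of $P$ by $Q$, and $G$-invariance then transfer from the corresponding features of $R$.

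Finally I check that the two maps are mutually inverse. The lift-then-quotient direction is immediate by construction: covering relations and $\overline\delta$-values in the quotient of the lift recover $R$ and $\mu$. The harder direction is quotient-then-lift: starting with $(Q,\delta)$ such that $G \subseteq \mathrm{Aut}(Q,\delta)$ and forming $R = Q/G$, one must show that the reconstructed poset $Q'$ agrees with $Q$. This is precisely the point at which \Cref{lem:all covering relations} intervenes: parts (i) and (ii) say that a single cover $q \lessdot_Q q'$ forces $gq \lessdot_Q hq'$ with the same $\delta$-value for every $g, h \in G$, so the covers of $Q$ are exactly the preimages under the quotient map of the covers of $R$. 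This will be the main obstacle of the argument, since without \Cref{lem:all covering relations} distinct $G$-invariant saturations of $(P,\varepsilon)$ could in principle descend to the same quotient, breaking injectivity.
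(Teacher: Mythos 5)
Your proposal is correct and follows essentially the same route as the paper: the same quotient map (via \Cref{lem:sign-graded quotient}) and the same lift construction, with well-definedness checked in the same way, and with \Cref{lem:all covering relations} (through \Cref{cor:quotient of a saturated poset}) doing exactly the work you identify in the quotient-then-lift direction. Your use of the strict relation $Gp <_R Gp'$ in defining the lift, together with the remark that $p <_P p'$ forces $Gp \neq Gp'$, is a slightly more careful phrasing of the paper's construction but not a different argument.
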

\begin{proof}
    \phantom{ }
    
    \textbf{Claim 1}: with each saturation $(\widetilde{Q}, \widetilde{\delta})$ of $(P/G, \overline\varepsilon)$ we can associate a saturation $(Q,\delta)$ of $(P, \varepsilon)$ such that $G \subseteq \mathrm{Aut}(Q,\delta)$.
    
    \emph{Proof of Claim 1.} Let $(\widetilde{Q}, \widetilde{\delta})$ be a saturation of $(P/G, \overline\varepsilon)$. Define a new poset $Q$ and a map $\delta\colon E(Q) \to \{-1,1\}$ as follows:
    \begin{itemize}
    \item $Q = P$ as sets;
    \item one has that $p \leq_Q p'$ if and only if $Gp \leq_{\widetilde{Q}} Gp'$ (note that implies that $p \lessdot_Q p'$ is a covering relation of $Q$ precisely when $Gp \lessdot_{\widetilde{Q}} Gp'$ is a covering relation of $\widetilde{Q}$);
    \item $\delta(p \lessdot_Q p') := \widetilde{\delta}(Gp \lessdot_{\widetilde{Q}} Gp')$.
    \end{itemize}
    Since $\delta(p \lessdot_Q p') = \widetilde{\delta}(Gp \lessdot_{\widetilde{Q}} Gp') = \delta(gp \lessdot_Q gp')$ for every covering relation $p \lessdot_Q p' \in E(Q)$ and for every $g \in G$, it follows immediately that $G$ is a subgroup of $\mathrm{Aut}(Q, \delta)$. Let us now prove that $(Q,\delta)$ is a saturation of $(P,\varepsilon)$. One has that $Q=P$ as sets by construction. To see that $Q$ extends $P$, consider $p, p' \in P$ such that $p \leq_P p'$. Then $Gp \leq_{P/G} Gp'$ and thus, since $\widetilde{Q}$ extends $P/G$, one has that $Gp \leq_{\widetilde{Q}} Gp'$, which in turn implies that $p \leq_Q p'$ by definition of $Q$. To see that $Q$ is $\delta$-consistent, pick $p \in Q$ and consider a maximal saturated chain $\mathcal{C}\colon q_0 \lessdot_Q q_1 \lessdot_Q \ldots \lessdot_Q q_m = p$ in $Q_{\leq p}$. Then $\widetilde{\mathcal{C}}\colon Gq_0 \lessdot_{\widetilde{Q}} Gq_1 \lessdot_{\widetilde{Q}} \ldots \lessdot_{\widetilde{Q}} Gq_m = Gp$ is a maximal saturated chain in $\widetilde{Q}_{\leq Gp}$, which is a $\widetilde{\delta}_{Gp}$-graded poset by assumption. Moreover, $\delta(\mathcal{C}) = \widetilde{\delta}(\widetilde{\mathcal{C}})$. This implies both that $Q$ is $\delta$-consistent and that $\rho_Q(p) = \rho_{\widetilde{Q}}(Gp)$ for every $p \in Q$. Since $(\widetilde{Q},\widetilde\delta)$ is a saturation of $(P/G,\overline\varepsilon)$, we have that $\rho_{\widetilde{Q}}(Gp) = \rho_{P/G}(Gp) = \rho_P(p)$, where the last equality is given by \Cref{lem:sign-graded quotient}(ii). Hence, $(Q,\delta)$ and $(P,\varepsilon)$ share the same rank function. Finally, consider $p, q \in Q$ such that $|\rho_Q(q)-\rho_Q(p)| = 1$. Then $|\rho_{\widetilde{Q}}(Gq)-\rho_{\widetilde{Q}}(Gp)| = 1$ and thus, since $(\widetilde{Q},\widetilde\delta)$ is a saturation of $(P/G,\overline\varepsilon)$, one has that $Gp$ and $Gq$ are comparable in $\widetilde{Q}$. But then $p$ and $q$ must be comparable in $Q$, as desired.

    \textbf{Claim 2}: with each saturation $(Q,\delta)$ of $(P, \varepsilon)$ such that $G \subseteq \mathrm{Aut}(Q,\delta)$ we can associate a saturation $(\widetilde{Q}, \widetilde{\delta})$ of $(P/G, \overline\varepsilon)$.

    \emph{Proof of Claim 2.} Let $(Q,\delta)$ be a saturation of $(P,\varepsilon)$ such that $G \subseteq \mathrm{Aut}(Q,\delta)$. By \Cref{lem:sign-graded quotient}, the quotient poset $(Q/G, \overline\delta)$ (which coincides with $P/G$ as a set) is $\overline\delta$-consistent and has the same rank function as $Q$. As a consequence, for every $\mathcal{O} \in Q/G$ and every $p \in \mathcal{O}$, one has that $\rho_{Q/G}(\mathcal{O}) = \rho_Q(p) = \rho_P(p) = \rho_{P/G}(\mathcal{O})$ (where we used that $(Q,\delta)$ is a saturation of $(P,\varepsilon)$ and the fact that passing from $P$ to $P/G$ preserves the rank function). To see that $Q/G$ extends $P/G$, consider $\mathcal{O}, \mathcal{O}' \in P/G$ such that $\mathcal{O} \leq_{P/G} \mathcal{O'}$. Then there exist $p \in \mathcal{O}$ and $p' \in \mathcal{O'}$ such that $p \leq_P p'$. Since $Q$ extends $P$, we also have that $p \leq_Q p'$, which in turn implies that $\mathcal{O} \leq_{Q/G} \mathcal{O'}$. Finally, consider $\mathcal{O}, \mathcal{O}' \in Q/G$ such that $|\rho_{Q/G}(\mathcal{O}')-\rho_{Q/G}(\mathcal{O})| = 1$. Then, for any $q \in \mathcal{O}$ and $q' \in \mathcal{O}'$, one has that $|\rho_{Q}(q')-\rho_{Q}(q)| = 1$. Since $(Q,\delta)$ is a saturation of $(P,\varepsilon)$, one has that $q$ and $q'$ are comparable in $Q$, which implies that $\mathcal{O}$ and $\mathcal{O'}$ must be comparable in $Q/G$, as desired.

    \textbf{Claim 3}: the maps described in Claims 1 and 2 yield a bijection.
    
    \emph{Proof of Claim 3.} Let $\lambda$ be the map described in Claim 1 and let $\pi$ be the map described in Claim 2 (and in \Cref{lem:sign-graded quotient}).

    Let $(Q, \delta)$ be a saturation of $(P, \varepsilon)$ such that $G \subseteq \mathrm{Aut}(Q, \delta)$. We wish to prove that $\lambda \circ \pi(Q,\delta) = (Q,\delta)$, i.e., that $\lambda(Q/G, \overline\delta) = (Q,\delta)$. Both $\lambda(Q/G)$ and $Q$ coincide with $P$ as sets. By \Cref{cor:quotient of a saturated poset}, one has that $q \lessdot_{Q} q'$ if and only if $Gq \lessdot_{Q/G} Gq'$, which is in turn equivalent to $q \lessdot_{\lambda(Q/G)} q'$ by the construction in Claim 1. Applying again \Cref{cor:quotient of a saturated poset} and Claim 1, it follows that $\delta_{\lambda(Q/G)}(q \lessdot q') = \delta_Q(q \lessdot q')$.

    Let $(\widetilde{Q}, \widetilde\delta)$ be a saturation of $(P/G,\overline\varepsilon)$. We wish to prove that $\pi \circ \lambda(\widetilde{Q},\widetilde\delta) = (\widetilde{Q},\widetilde\delta)$, i.e., that $\lambda(\widetilde{Q},\widetilde\delta)/G = (\widetilde{Q},\widetilde\delta)$. Both $\lambda(\widetilde{Q})/G$ and $\widetilde{Q}$ coincide with $P/G$ as sets.
    If $\mathcal{O} \lessdot_{\widetilde{Q}} \mathcal{O}'$, then for any representatives $q \in \mathcal{O}$ and $q' \in \mathcal{O'}$ one has that $q \lessdot_{\lambda(\widetilde{Q})} q'$, which in turn implies that $\mathcal{O} \lessdot_{\lambda(\widetilde{Q})/G} \mathcal{O}'$. Conversely, if $\mathcal{O} \lessdot_{\lambda(\widetilde{Q})/G} \mathcal{O}'$, then there exist representatives $q \in \mathcal{O}$, $q' \in \mathcal{O}'$ such that $q \lessdot_{\lambda(\widetilde{Q})} q'$, which implies that $\mathcal{O} \lessdot_{\widetilde{Q}} \mathcal{O}'$. One checks immediately that $\widetilde\delta(\mathcal{O} \lessdot \mathcal{O}') = \delta_{\lambda(\widetilde{Q})/G}(\mathcal{O} \lessdot \mathcal{O}')$, which concludes the proof.
\end{proof}

\section{Order polytopes of sign-graded posets \texorpdfstring{\\}{}and their equivariant Ehrhart series} \label{sec:order polytopes of sign-graded posets}

\subsection{Order polytopes of sign-graded posets}

First, we introduce $\mathscr{O}(P,\varepsilon)$ for an $\varepsilon$-con\-sis\-tent poset $P$. This is obtained from the usual order polytope of $P$ by removing a suitable set of facets, and coincides with it when $\varepsilon = \mathbf{1}$. For the precise definition, we need a new concept.

Let $P$ be an $\varepsilon$-consistent poset and let $p,p' \in P$ such that $p <_P p'$. We say that $p$ and $p'$ form an \emph{ascending pair} if, for every saturated chain $p=p_0 \lessdot p_1 \lessdot \ldots \lessdot p_m=p'$, one has that $\varepsilon(p_i \lessdot p_{i+1}) = 1$ for every $i$. When this is not the case, we say that $p$ and $p'$ form a \emph{nonascending pair}. 

\begin{Definition}
    Let $P$ be an $\varepsilon$-consistent poset. Then we define 
    $\mathscr{O}(P,\varepsilon)$ as the convex set consisting of those functions $f \colon P \to [0,1]$ such that \begin{equation} \label{eq:defining inequalities of an order polytope}
    \begin{split}
    &f(p) \geq f(q) \textrm{ if } p \leq_P q,\\ &f(p) > f(q) \textrm{ if } p <_P q \textrm{ and }p, q\textrm{ form a nonascending pair.}
    \end{split}
    \end{equation}

    We define the dimension of $\mathscr{O}(P,\varepsilon)$ to be the dimension of its affine hull. This turns out to be equal to $|P|$, since $\mathrm{relint}(\mathscr{O}(P)) \subseteq \mathscr{O}(P,\varepsilon) \subseteq \mathscr{O}(P)$ and any order polytope is known to be full-dimensional (here and in what follows, ``relint'' is the relative interior).
\end{Definition}

In what follows, if $X$ is any finite set and $x \in X$, we will write $\mathbf{e}_x^*$ to denote the function $X \to \mathbb{Z}$ such that $\mathbf{e}^*_x(y) = 1$ if $y=x$ and $0$ otherwise. The collection of all the possible $\mathbf{e}_x^*$ forms a basis for the lattice $\mathbb{Z}^X$ of all functions $X \to \mathbb{Z}$.

If $P$ is an $\varepsilon$-consistent poset and $G$ is a subgroup of $\mathrm{Aut}(P, \varepsilon)$ (which in turn is a subgroup of the group $\Sf_P$ of bijections of the set $P$), then $G$ acts linearly on the lattice $\ZZ^P$ by $g \mapsto (f \mapsto f \circ g^{-1})$, and such an action preserves $\mathscr{O}(P,\varepsilon)$ (see also \Cref{subsec:equivariant Ehrhart for order polytopes}). 
We will write $\mathscr{O}(P,\varepsilon)^G$ to denote the part of $\mathscr{O}(P,\varepsilon)$ that is fixed pointwise by each element of $G$. If $G = \langle g \rangle$, i.e., if $G$ is cyclically generated by $g$, one has that \[\mathscr{O}(P,\varepsilon)^{\langle g \rangle} = \mathscr{O}(P,\varepsilon)^g = \{f \in \mathscr{O}(P,\varepsilon) \mid g \cdot f = f\} = \{f \in \mathscr{O}(P,\varepsilon) \mid f \circ g^{-1} = f\}.\]

\begin{Lemma} \label{lem:order polytope of a quotient poset}
    Let $P$ be an $\varepsilon$-consistent poset and let $G$ be a subgroup of $\mathrm{Aut}(P,\varepsilon)$. Then there is a unimodular equivalence between $\mathscr{O}(P,\varepsilon)^G$ and $\mathscr{O}(P/G, \overline{\varepsilon})$.
\end{Lemma}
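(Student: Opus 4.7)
The plan is to exhibit the natural linear map $\Psi\colon \mathbb{R}^{P/G} \to (\mathbb{R}^{P})^{G}$ defined by $(\Psi \tilde f)(p) := \tilde f(Gp)$ and to show it realizes the desired unimodular equivalence. First I would note that $\Psi$ is a linear isomorphism: its inverse sends any $G$-invariant function $f\colon P\to\mathbb{R}$ to the well-defined descent $\mathcal{O} \mapsto f(p)$ (for any $p\in\mathcal{O}$). Since a function on $P$ is integer-valued precisely when its descent on $P/G$ is, $\Psi$ restricts to a lattice isomorphism $\mathbb{Z}^{P/G} \xrightarrow{\sim} (\mathbb{Z}^P)^G$, which delivers the unimodularity. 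What remains is to check that this isomorphism takes $\mathscr{O}(P/G,\overline\varepsilon)$ bijectively onto $\mathscr{O}(P,\varepsilon)^G$.

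For the forward inclusion, let $\tilde f \in \mathscr{O}(P/G,\overline\varepsilon)$ and $f = \Psi(\tilde f)$. Membership in $[0,1]^P$ and $G$-invariance are immediate, and the weak inequality $f(p)\geq f(q)$ for $p\leq_P q$ follows by refining the relation $Gp \leq_{P/G} Gq$ to a saturated chain in $P/G$ and summing the $\tilde f$-inequalities along it. The delicate ingredient is the strict inequality for a nonascending pair $(p,q)$ in $P$. I would take a witnessing saturated chain $p = p_0 \lessdot \cdots \lessdot p_m = q$ in $P$ containing an edge with $\varepsilon(p_j \lessdot p_{j+1}) = -1$, and then argue that $Gp_j <_{P/G} Gp_{j+1}$ with $\rho(Gp_{j+1}) - \rho(Gp_j) = -1$, using \Cref{lem:sign-graded quotient}(ii) and the fact that $G$ preserves ranks. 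By $\overline\varepsilon$-consistency of $P/G$, any saturated chain in $P/G$ between $Gp_j$ and $Gp_{j+1}$ has $\overline\varepsilon$-sum equal to $-1$, so at least one of its edges carries $\overline\varepsilon = -1$; hence $(Gp_j, Gp_{j+1})$ is itself a nonascending pair in $P/G$, giving the strict inequality $\tilde f(Gp_j) > \tilde f(Gp_{j+1})$. Combined with the weak inequalities $\tilde f(Gp_i) \geq \tilde f(Gp_{i+1})$ along the remaining edges of the descended sequence, this yields $f(p) = \tilde f(Gp) > \tilde f(Gq) = f(q)$.

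For the reverse inclusion, given $f \in \mathscr{O}(P,\varepsilon)^G$ and a nonascending pair $(\mathcal{O},\mathcal{O}')$ in $P/G$, I would apply the walking-and-lifting argument already used in the proof of \Cref{lem:sign-graded quotient}(i): a witnessing saturated chain in $P/G$ with some $\overline\varepsilon = -1$ lifts to a saturated chain in $P$ from a representative $p \in \mathcal{O}$ to a representative $q \in \mathcal{O}'$ with matching $\varepsilon$-values. This makes $(p,q)$ a nonascending pair in $P$, and the strict inequality defining $\mathscr{O}(P,\varepsilon)$ together with $G$-invariance of $f$ gives $\tilde f(\mathcal{O}) = f(p) > f(q) = \tilde f(\mathcal{O}')$. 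The main technical hurdle is the strict inequality in the forward direction: one is tempted to descend the covering relation $p_j \lessdot p_{j+1}$ directly to a covering relation in $P/G$, but this is not automatic. The trick is to sidestep the issue by inserting an arbitrary saturated chain in $P/G$ between $Gp_j$ and $Gp_{j+1}$, and using $\overline\varepsilon$-consistency together with the global rank identity to force at least one edge of that chain to carry $\overline\varepsilon = -1$.
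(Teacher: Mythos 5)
Your proposal is correct and follows essentially the same route as the paper: the same mutually inverse maps (pullback along the projection $P \to P/G$ and descent of $G$-invariant functions), the same lattice identification $\mathbb{Z}^{P/G} \cong \bigoplus_{\mathcal{O}}\mathbb{Z}\cdot(\sum_{p\in\mathcal{O}}\mathbf{e}^*_p)$, and the same chain-lifting from the proof of \Cref{lem:sign-graded quotient} for the reverse inclusion. The only difference is that you spell out the strict-inequality step for nonascending pairs (via rank differences and $\overline\varepsilon$-consistency of $P/G$), which the paper asserts without detail; your argument for it is valid.
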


\begin{proof}
    To fix notation, we write down 
    \[\begin{split}\mathscr{O}(P,\varepsilon)^G = \{&f \colon P \to [0,1] \textrm{ such that }\\ &f(p) \geq f(q) \textrm{ if } p \leq_{P} q,\\ &f(p) > f(q) \textrm{ if } p <_P q \textrm{ and } p,q \textrm{ form a nonascending pair,}\\ & f(gp) = f(p) \textrm{ for all }p \in P,\ g \in G\}
    \end{split}\]
    and
    \[\begin{split}
    \mathscr{O}(P/G, \overline{\varepsilon}) = \{&\varphi\colon P/G \to [0,1] \textrm{ such that }\\ &\varphi(\mathcal{O}) \geq \varphi(\mathcal{O}') \textrm{ if } \mathcal{O} \leq_{P/G} \mathcal{O}',\\ &\varphi(\mathcal{O}) > \varphi(\mathcal{O}') \textrm{ if } \mathcal{O} <_{P/G} \mathcal{O}' \textrm{ and }\mathcal{O},\mathcal{O}' \textrm{ form a nonascending pair}\}. 
    \end{split}\]
    Note that the last condition in the definition of $\mathscr{O}(P,\varepsilon)^G$ is equivalent to the natural one, i.e., $gf = f$ for every $g \in G$ (which translates to $f(g^{-1}p) = f(p)$ for all $p \in P$, $g \in G$).

    Denote by $\pi \colon P \to P/G$ the natural projection and denote by $\lambda\colon P/G \to P$ a section of $\pi$, i.e., a map obtained by choosing a representative $\lambda(\mathcal{O})$ for each $G$-orbit $\mathcal{O}$. Note that, by \Cref{lem:sign-graded quotient}, one has that $\overline\varepsilon(\pi(p) \lessdot \pi(p')) = \varepsilon(p \lessdot p')$ for all $p \lessdot p' \in E(P)$ and that $\varepsilon(\lambda(\mathcal{O}) \lessdot \lambda(\mathcal{O}')) = \overline\varepsilon(\mathcal{O} \lessdot \mathcal{O}')$ for all $\mathcal{O} \lessdot \mathcal{O}' \in E(P/G)$.

    We now want to define two maps
    \[
    \begin{tikzcd}
        {\mathscr{O}(P,\varepsilon)^G} \arrow[r, yshift=0.75ex, "\Pi"] & {\mathscr{O}(P/G,\overline{\varepsilon})} \arrow[l, yshift=-0.75ex, "\Lambda"]
    \end{tikzcd}
    \]
    realizing the unimodular equivalence between $\mathscr{O}(P,\varepsilon)^G$ and $\mathscr{O}(P/G,\overline{\varepsilon})$. We set $\Lambda(\varphi) := \varphi \circ \pi$ and $\Pi(f) := f \circ \lambda$ (in other words, $\Lambda$ is the result of precomposing by $\pi$ and $\Pi$ is the result of precomposing by $\lambda$).

    Let us first check that $\Lambda$ and $\Pi$ are well-defined. Given $\varphi \in \mathscr{O}(P/G, \overline{\varepsilon})$, we need to check that $\Lambda(\varphi) \in \mathscr{O}(P, \varepsilon)^G$. If $p \leq_P q$, then $Gp \leq_{P/G} Gq$ and hence $\varphi(Gp) \geq \varphi(Gq)$, i.e., $\Lambda(\varphi)(p) \geq \Lambda(\varphi)(q)$, as desired. If $p <_P q$ and $p,q$ form a nonascending pair, then $Gp <_{P/G} Gq$ and $Gp,Gq$ form a nonascending pair, and thus $\Lambda(\varphi)(p) > \Lambda(\varphi)(q)$. Finally, for all $g \in G$ and $p \in P$ one has that $\Lambda(\varphi)(gp) = \varphi(Gp) = \Lambda(\varphi)(p)$. Now pick $f \in \mathscr{O}(P,\varepsilon)^G$. Since $f$ takes the same value on each representative of any given $G$-orbit $\mathcal{O}$, one has that $\Pi(f)(\mathcal{O})$ is uniquely determined. To check that $\Pi(f) \in \mathscr{O}(P/G, \overline{\varepsilon})$ it is enough to recall that, if $\mathcal{O}_0 \lessdot \mathcal{O}_1 \lessdot \ldots \lessdot \mathcal{O}_m$, then there exist $p_i \in \mathcal{O}_i$ such that $p_i \lessdot p_{i+1}$ and $\varepsilon(p_i \lessdot p_{i+1}) = \overline{\varepsilon}(\mathcal{O}_i \lessdot \mathcal{O}_{i+1})$ for every $i \in \{0, 1, \ldots, m-1\}$.

    It is left to the reader to check that $\Lambda \circ \Pi = \mathrm{id}_{\mathscr{O}(P,\varepsilon)^G}$, that $\Pi \circ \Lambda = \mathrm{id}_{\mathscr{O}(P/G, \overline{\varepsilon})}$, and that $\Lambda$ and $\Pi$ yield unimodular transformations between the lattices $\bigoplus_{\mathcal{O} \in P/G}\mathbb{Z}\cdot\mathbf{e}^*_{\mathcal{O}}$ and $\bigoplus_{\mathcal{O} \in P/G}\mathbb{Z} \cdot (\sum_{p \in \mathcal{O}}\mathbf{e}^*_p)$.  
\end{proof}

Note that, if $P$ is $\varepsilon$-consistent and $m$ is a positive integer, then the $m$-th dilation of $\mathscr{O}(P,\varepsilon)$ consists of all $f \in P \to [0,m]$ such that inequalities \eqref{eq:defining inequalities of an order polytope} hold. Moreover, the elements of $m\mathscr{O}(P,\varepsilon)$ that belong to the lattice $\mathbb{Z}^P$ correspond bijectively to the $(P,\varepsilon)$-partitions\footnote{To be consistent with the literature one would need to talk about $(P,\omega)$-partitions, where the vertex labeling $\omega$ is obtained from $\varepsilon$ as in \Cref{rem:vertex labeling}.} with largest part at most $m+1$. In view of this, we define $0\mathscr{O}(P,\varepsilon)$ to be the singleton consisting of the zero function if $\varepsilon=\mathbf{1}$, and the empty set otherwise.

\subsection{Representation theory of finite groups}\label{sec:rep}

We briefly review some fundamental notions about the representation theory of finite groups. For a more detailed introduction, we refer the reader to, e.g.,~\cite{serre}.

Let $G$ be a finite group. 
A \textit{complex representation} of $G$ is a group homomorphism $\rho\colon G \rightarrow \GL(V)$ for some finite-dimensional vector space $V$ over $\CC$.
A complex representation can be identified with a $\CC G$-module, where $\CC G$ is the complex group algebra of $G$: on the one hand, given a complex representation $\rho\colon G \rightarrow \GL(V)$, we associate a $\CC G$-module structure with $V$ 
by setting $g \cdot v := \rho(g)v$ for $g \in G$ and $v \in V$; 
on the other hand, given a $\CC G$-module, we associate with it a group homomorphism $\rho$ defined by $\rho(g)(v):=g \cdot v$ for $v \in V$. 
We say that a complex representation $\rho\colon G \rightarrow \GL(V)$ is \textit{irreducible} 
if there is no $0 \neq W \subsetneq V$ such that $\rho(g) W =W$ for all $g \in G$. 
The \textit{character} of a complex representation $\rho\colon G \rightarrow \GL(V)$ is the map $\chi\colon G \rightarrow \CC$ 
which sends each $g \in G$ to the trace of the matrix $\rho(g)$. 
The \textit{trivial character}, denoted by $1$, is an irreducible character $1\colon G \rightarrow \CC$ defined by $1(g)=1$ for each $g \in G$. 

The (complex) \emph{ring of virtual characters} $\mathrm{R}_G$ of $G$ is the subring of the ring of complex class functions generated by all integer multiples of characters. More concretely, the ring $\mathrm{R}_G$ is a free abelian group whose basis consists of irreducible $G$-characters. If $\rho$ and $\theta$ are $G$-representations, then $\chi_{\rho} + \chi_{\theta} = \chi_{\rho \oplus \theta}$ and $\chi_{\rho} \cdot \chi_{\theta} = \chi_{\rho \otimes \theta}$; the addition and multiplication of $\mathrm{R}_G$ are induced by such relations. Note that the trivial character $1$ yields the multiplicative identity of $\mathrm{R}_G$. We say that an element of $\mathrm{R}_G$ is \emph{effective} if its coefficients with respect to the basis of irreducible $G$-characters are all nonnegative integers. This happens precisely when such an element corresponds to an actual $G$-character. Note that, since $G$ is finite and we work over the complex numbers, the ring $\mathrm{R}_G$ carries the same information as the Grothendieck ring of representations of $G$ over $\CC$.

We also recall induced representations and their characters. Let $H$ be a (not necessarily normal) subgroup of $G$ and fix a system of representatives $R$ of the set of left cosets $G/H$. 
Let $\theta\colon H \rightarrow \GL(W)$ be a representation of $H$, where $W$ is a vector space over $\CC$. 
We say that the representation $\rho\colon G \rightarrow \GL(V)$, where $V$ is a finite-dimensional vector space over $\CC$, of $G$ is \textit{induced} by $\theta$ 
if $V=\bigoplus_{\sigma \in G/H}W_\sigma$, where $W_\sigma=\{w \in W \mid g \cdot w = w \text{ for every } g \in \sigma\}$. (Said otherwise, if $W$ is a $\mathbb{C}H$-module, then the induced representation corresponds to the $\mathbb{C}G$-module obtained as $\CC G \otimes_{\CC H} W$.) We will write that $\rho = \mathrm{Ind}^G_H(\theta)$. 
Let $\chi_\rho$ and $\chi_\theta$ be the characters of $G$ and $H$ corresponding to $\rho$ and $\theta$, respectively. 
If $\rho$ is induced by $\theta$, then $\chi_\rho$ can be described in terms of $\theta$ as follows: 
\begin{align}\label{eq:induced}
\chi_\rho(u)=\sum_{\substack{r \in R \\ r^{-1}ur \in H}}\chi_\theta(r^{-1}ur). 
\end{align} 
See, e.g., \cite[Theorem 12]{serre}. Note that, if $\theta = 1$, then the induced representation $\mathrm{Ind}^G_H(1)$ equals the permutation representation of $G$ on the set of left cosets $G/H$. In particular, if $\theta=1$ and $H = \{e\}$, one has that $\mathrm{Ind}^G_{\{e\}}(1)$ equals the regular representation of $G$.

Finally, if $H$ is a subgroup of $G$ and $\rho\colon G \to \GL(V)$ is a representation of $G$, one can consider the representation $\theta$ of $H$ obtained by restricting $\rho$ to $H$. We will say that $\theta$ is a \emph{restriction} of $\rho$, and write $\theta = \mathrm{Res}^G_H(\rho)$. Induced and restricted representations are linked by Frobenius reciprocity: see, e.g., \cite[Theorem 13]{serre}. 

\subsection{Equivariant Ehrhart theory for order polytopes} \label{subsec:equivariant Ehrhart for order polytopes}

Let $P$ be an $\varepsilon$-consistent poset. 
The group $\mathfrak{S}_{P}$ of bijections of the set $P$ acts linearly on the lattice $M := \mathbb{Z}^P$ by \[\begin{split}\xi\colon\mathfrak{S}_P &\to \mathrm{GL}(\mathbb{Z}^P)\\g &\mapsto \xi_g \colon f \mapsto f \circ g^{-1}\end{split}\]
It is useful to extend such an action to $\widetilde{M} := \mathbb{Z}^P \oplus \mathbb{Z}$ by mapping $g$ to $\widetilde{\xi}_g \colon f \oplus m \to \xi_g(f) \oplus m$; one then gets complex representations $\xi_{\mathbb{C}}$ and $\widetilde\xi_{\mathbb{C}}$ (respectively of $M_{\mathbb{C}} := \mathbb{C}^P$ and $\widetilde M_{\mathbb{C}} := \mathbb{C}^P \oplus \mathbb{C}$) by tensoring $\xi$ and $\widetilde\xi$ with $\mathbb{C}$. 

Now let $G$ be a subgroup of $\mathrm{Aut}(P,\varepsilon)$, which in turn sits naturally as a subgroup of $\mathfrak{S}_{P}$. Let $\eta_G := \mathrm{Res}^{\mathfrak{S}_P}_{G}\xi_{\mathbb{C}}$ (respectively, $\widetilde\eta_G := \mathrm{Res}^{\mathfrak{S}_P}_{G}\widetilde\xi_{\mathbb{C}}$) denote the restriction of $\xi_{\mathbb{C}}$ (respectively, $\widetilde\xi_{\mathbb{C}}$) to $G$. Then, for each nonnegative integer $m$, one has that $\widetilde\eta_G$ preserves $m\mathscr{O}(P,\varepsilon) \oplus \{m\}$ and gives rise to a permutation action on its lattice points. Let $\chi_{m\mathscr{O}(P,\varepsilon)}$ be the associated complex permutation representation. Following Stapledon \cite{stapledon11}, we can then write the equivariant Ehrhart series of $\mathscr{O}(P,\varepsilon)$
    
    \[\EE(\mathscr{O}(P,\varepsilon),\eta_G;t) := \sum_{m \in \mathbb{Z}_{\geq 0}}\chi_{m\mathscr{O}(P,\varepsilon)}t^m \in \mathrm{R}_G[[t]]\]
and define
    \[\begin{split}{\mathfrak{h}^*}^G_{P,\varepsilon}(t) &:= \EE(\mathscr{O}(P,\varepsilon),\eta_G;t)\cdot\det(\mathrm{Id}-\widetilde\eta_Gt)\\ &\phantom{:}=\EE(\mathscr{O}(P,\varepsilon),\eta_G;t)\cdot(1-t)\cdot\det(\mathrm{Id}-\eta_Gt),\end{split}\]
where $\det(\mathrm{Id}-\eta_Gt)$ is the polynomial in $\mathrm{R}_G[t]$ defined as $\sum_{i=0}^{|P|}(-1)^i\chi_{\wedge^i\eta_G}t^i$ (which equals the inverse of $\sum_{j\geq 0}\chi_{\mathrm{Sym}^j\eta_G} \cdot t^j$ in $\mathrm{R}_G[[t]]$: see \cite[Lemma 3.1]{stapledon11}), and likewise for $\widetilde{\eta}_G$. 
Note that, unlike the non-equivariant case, ${\mathfrak{h}^*}^G_{P,\varepsilon}(t)$ is a formal series that might or might not be a polynomial.

If $g \in G$, we denote by $M_g$ the vector subspace of $M_{\mathbb{R}} = M \otimes_{\mathbb{Z}} \mathbb{R}$ that is fixed pointwise by $g$. If $g$ has cycle type $(\mu_1, \mu_2, \ldots, \mu_r)$, one checks that $M_g$ is $r$-dimensional and that $\det(\mathrm{Id}-\eta_Gt) = \prod_{j=1}^r(1-t^{\mu_j})$ \cite[Proof of Lemma 9.3]{stapledon11}. Moreover, one has that $\dim \mathscr{O}(P,\varepsilon)^g = \dim M_g = r$, since \[\mathrm{relint}(\mathscr{O}(P/\langle g \rangle)) \cong \mathrm{relint}(\mathscr{O}(P)^g) \subseteq \mathscr{O}(P,\varepsilon)^g \subseteq \mathscr{O}(P)^g \cong \mathscr{O}(P/\langle g \rangle),\] where we used the unimodular equivalence from \Cref{lem:order polytope of a quotient poset} and the fact that the order polytope $\mathscr{O}(P/\langle g \rangle)$ is full-dimensional inside the $r$-dimensional affine space $\bigoplus_{\mathcal{O} \in P / \langle g \rangle}\mathbb{R}\mathrm{e}^*_{\mathcal{O}}$.

\begin{Lemma} \label{lem:evaluation at g} 
    Let $P$ be an $\varepsilon$-consistent poset, let $G$ be a subgroup of $\mathrm{Aut}(P,\varepsilon)$ and let $g \in G$. Then \[{\mathfrak{h}^*}^{G}_{P,\varepsilon}(t)(g) = h^*(\mathscr{O}(P,\varepsilon)^{g}; t) \cdot \det(\mathrm{Id}-\eta_G(g)t)|_{M_g^{\perp}}.\]
    If $g$ has cycle type $(\mu_1, \ldots, \mu_r)$, it also holds that
    \[{\mathfrak{h}^*}^{G}_{P,\varepsilon}(t)(g) = h^*(\mathscr{O}(P,\varepsilon)^{g}; t) \cdot \prod_{j=1}^{r}(1+t+t^2+\ldots+t^{\mu_j-1}).\]
\end{Lemma}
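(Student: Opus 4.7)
The plan is to evaluate both factors of $\mathfrak{h}^{*G}_{P,\varepsilon}(t) = \EE(\mathscr{O}(P,\varepsilon),\eta_G;t)\cdot(1-t)\cdot\det(\mathrm{Id}-\eta_G t)$ at $g$, then repackage them as an $h^*$-polynomial times a determinant on the orthogonal complement.

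First, I would handle the Ehrhart series factor. Evaluating the permutation character at $g$,
\[
\chi_{m\mathscr{O}(P,\varepsilon)}(g) = \big|\{f\in m\mathscr{O}(P,\varepsilon)\cap\ZZ^P : g\cdot f = f\}\big| = \big|m\mathscr{O}(P,\varepsilon)^g\cap\ZZ^P\big|,
\]
where the second equality uses that $g$ acts $\ZZ$-linearly on $\ZZ^P$ and preserves each dilation. Summing over $m$ yields $\EE(\mathscr{O}(P,\varepsilon),\eta_G;t)(g)=\EE(\mathscr{O}(P,\varepsilon)^g;t)$, where the right-hand side is the (half-open) Ehrhart series of the convex set $\mathscr{O}(P,\varepsilon)^g$. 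By \Cref{lem:order polytope of a quotient poset}, this fixed set is unimodularly equivalent to $\mathscr{O}(P/\langle g\rangle,\overline{\varepsilon})$, which is a half-open lattice polytope of dimension $r$. Standard half-open Ehrhart theory therefore produces a genuine polynomial $h^*(\mathscr{O}(P,\varepsilon)^g;t)$ satisfying
\[
\EE(\mathscr{O}(P,\varepsilon)^g;t)\cdot(1-t)^{r+1} = h^*(\mathscr{O}(P,\varepsilon)^g;t).
\]

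Next, I would factor the determinant. Since $\widetilde{\eta}_G = \eta_G \oplus \mathbf{1}$ by construction, evaluation at $g$ gives $\det(\mathrm{Id}-\widetilde{\eta}_G t)(g) = (1-t)\det(\mathrm{Id}-\eta_G(g)t)$. Splitting $M_\CC = (M_g)_\CC \oplus M_g^{\perp}$ and noting that $g$ acts trivially on the first summand, we obtain
\[
\det(\mathrm{Id}-\eta_G(g)t) = (1-t)^r\cdot\det(\mathrm{Id}-\eta_G(g)t)\big|_{M_g^{\perp}}.
\]
Substituting both computations into the defining product gives
\[
\mathfrak{h}^{*G}_{P,\varepsilon}(t)(g) = \EE(\mathscr{O}(P,\varepsilon)^g;t)\cdot(1-t)^{r+1}\cdot\det(\mathrm{Id}-\eta_G(g)t)\big|_{M_g^{\perp}},
\]
which yields the first equality upon replacing the first two factors by $h^*(\mathscr{O}(P,\varepsilon)^g;t)$.

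For the second formula, if $g$ has cycle type $(\mu_1,\ldots,\mu_r)$ then (as recalled just before the lemma) $\det(\mathrm{Id}-\eta_G(g)t) = \prod_{j=1}^r(1-t^{\mu_j})$, so
\[
\det(\mathrm{Id}-\eta_G(g)t)\big|_{M_g^{\perp}} = \frac{\prod_{j=1}^r(1-t^{\mu_j})}{(1-t)^r} = \prod_{j=1}^r(1+t+\cdots+t^{\mu_j-1}).
\]

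The only step that is not a bookkeeping manipulation is justifying the rational form of $\EE(\mathscr{O}(P,\varepsilon)^g;t)$ as a fraction with denominator $(1-t)^{r+1}$ and a polynomial numerator; this is the main obstacle, but it is resolved precisely by \Cref{lem:order polytope of a quotient poset}, which identifies $\mathscr{O}(P,\varepsilon)^g$ (up to unimodular equivalence) with the half-open lattice polytope $\mathscr{O}(P/\langle g\rangle,\overline{\varepsilon})$ of dimension $r$, to which standard half-open Ehrhart theory applies.
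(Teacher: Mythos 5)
Your proposal is correct and follows essentially the same route as the paper: evaluate the permutation characters at $g$ to obtain $\EE(\mathscr{O}(P,\varepsilon)^g;t)$, split off $(1-t)^{\dim M_g}$ from $\det(\mathrm{Id}-\eta_G(g)t)$ using $M_{\RR}=M_g\oplus M_g^{\perp}$ together with $\dim\mathscr{O}(P,\varepsilon)^g=\dim M_g=r$ (via the unimodular equivalence of \Cref{lem:order polytope of a quotient poset}), and finish with the cycle-type formula for the determinant. Your treatment is only slightly more explicit than the paper's about the half-open Ehrhart-theoretic fact that $\EE(\mathscr{O}(P,\varepsilon)^g;t)\cdot(1-t)^{r+1}$ is the polynomial $h^*(\mathscr{O}(P,\varepsilon)^g;t)$, which the paper leaves implicit.
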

\begin{proof}
Note first that one has that
 \[\EE(\mathscr{O}(P,\varepsilon),\eta_G;t)(g) = \sum_{m \in \mathbb{Z}_{\geq 0}}\chi_{m\mathscr{O}(P,\varepsilon)}(g)\cdot t^m = \mathrm{Ehr}(\mathscr{O}(P,\varepsilon)^g; t),\]
 since \[\chi_{m\mathscr{O}(P,\varepsilon)}(g) = |\{x \in m\mathscr{O}(P,\varepsilon)\cap M \mid gx = x\}| = |m\mathscr{O}(P,\varepsilon)^g \cap M|\]
 for every $m \in \mathbb{Z}_{\geq 0}$.
 Since $M_{\mathbb{R}} = M_g \oplus M_g^{\perp}$ and \[\begin{split}\det(\mathrm{Id}-\eta_G(g)t) &= (1-t)^{\dim(M_g)}\det(\mathrm{Id}-\eta_G(g)t)|_{M_g^{\perp}} \\&= (1-t)^{\dim(\mathscr{O}(P,\varepsilon)^g)}\det(\mathrm{Id}-\eta_G(g)t)|_{M_g^{\perp}},\end{split}\]
 the first claim follows. To prove the second claim it is sufficient to recall that, if $g$ has cycle type $(\mu_1, \ldots, \mu_r)$, then $\dim M_g = r$ and $\det(\mathrm{Id}-\eta_G(g)t) = \prod_{j=1}^r (1-t^{\mu_j})$.
\end{proof}

Let $P$ and $Q$ be finite posets and let $G$ and $H$ be subgroups of respectively $\mathrm{Aut}(P)$ and $\mathrm{Aut}(Q)$. Then $G \times H$ acts on the ordinal sum $P \oplus Q$ by \[(g, h) \mapsto \left(x \mapsto \begin{cases}gx & x \in P\\ hx & x \in Q\end{cases} \right) \] and on the lattice $\mathbb{Z}^P \oplus \mathbb{Z}^Q$ by $(g, h) \mapsto (f \oplus f' \mapsto f \circ g^{-1} \oplus f' \circ h^{-1})$.

\begin{Lemma} \label{lem:ordinal sum}
Let $P$ be an $\varepsilon$-consistent poset, let $Q$ be a $\delta$-consistent poset and let $G$ and $H$ be subgroups of respectively $\mathrm{Aut}(P, \varepsilon)$ and $\mathrm{Aut}(Q, \delta)$. Then:
\begin{enumerate}[label=(\roman*)]
\item $(P \oplus_{1} Q)/ G \times H$ is an $\overline\varepsilon \oplus_1 \overline\delta$-consistent poset that is isomorphic (as a sign-consistent poset) to $(P/G \oplus Q/H, \ \overline\varepsilon\oplus_1 \overline\delta)$;
\item it holds that 
\[{\mathfrak{h}^*}^{G \times H}_{P \oplus Q, \ \varepsilon \oplus_1 \delta}(t) = {\mathfrak{h}^*}^G_{P, \varepsilon}(t) \cdot {\mathfrak{h}^*}^H_{Q, \delta}(t),\]
where ${\mathfrak{h}^*}^G_{P,\varepsilon}(t)$ is considered as a $G \times H$-character by ${\mathfrak{h}^*}^G_{P,\varepsilon}(t)(g,h) := {\mathfrak{h}^*}^G_{P,\varepsilon}(t)(g)$ (and similarly for ${\mathfrak{h}^*}^H_{Q,\delta}(t)$).
\end{enumerate}
\end{Lemma}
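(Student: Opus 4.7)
The plan is as follows. For part (i), I observe that the $(G \times H)$-orbits on $P \oplus Q$ are precisely the $G$-orbits on $P$ together with the $H$-orbits on $Q$, since the two factors act on disjoint halves. The quotient set is thus $P/G \sqcup Q/H$; the inherited partial order is the ordinal sum (every $G$-orbit in $P$ lies strictly below every $H$-orbit in $Q$, by construction of $P \oplus Q$); and the inherited labeling $\overline{\varepsilon \oplus_1 \delta}$ agrees with $\overline\varepsilon$ on covers inside $P/G$, with $\overline\delta$ on covers inside $Q/H$, and equals $+1$ on covers between maxima of $P/G$ and minima of $Q/H$ (since these lift to covers between $P$-maxima and $Q$-minima in $P \oplus_1 Q$, which are labeled $+1$). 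This is precisely the sign-consistent poset $(P/G \oplus Q/H,\, \overline\varepsilon \oplus_1 \overline\delta)$, with consistency following from Lemma~\ref{lem:sign-graded quotient} applied to each piece.

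For part (ii), I unpack both sides via the definition ${\mathfrak{h}^*} = \EE \cdot \det(\Id - \widetilde\eta t)$ and factor each factor separately. As a $(G \times H)$-representation, $\widetilde\eta_{G \times H}$ on $\mathbb{C}^P \oplus \mathbb{C}^Q \oplus \mathbb{C}$ decomposes block-diagonally into $\eta_G$ on $\mathbb{C}^P$, $\eta_H$ on $\mathbb{C}^Q$, and the trivial representation on $\mathbb{C}$. A direct computation in the character ring yields
\[\det(\Id - \widetilde\eta_G t) \cdot \det(\Id - \widetilde\eta_H t) = (1-t) \cdot \det(\Id - \widetilde\eta_{G \times H} t),\]
so the claim reduces to the equivariant Ehrhart identity
\[\EE\bigl(\mathscr{O}(P \oplus Q,\, \varepsilon \oplus_1 \delta),\, \eta_{G \times H};\, t\bigr) = (1-t) \cdot \EE(\mathscr{O}(P,\varepsilon),\eta_G;t) \cdot \EE(\mathscr{O}(Q,\delta),\eta_H;t).\]

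To establish this Ehrhart identity, I first check that the lattice points of $m\mathscr{O}(P \oplus_1 Q,\, \varepsilon \oplus_1 \delta)$ correspond $(G \times H)$-equivariantly to pairs $(f_P, f_Q)$ with $f_P \in m\mathscr{O}(P,\varepsilon)_{\mathbb{Z}}$, $f_Q \in m\mathscr{O}(Q,\delta)_{\mathbb{Z}}$, subject to the single coupling condition $\min f_P \geq \max f_Q$: the only covers in $P \oplus_1 Q$ joining $P$ and $Q$ go from $P$-maxima to $Q$-minima and carry the label $+1$, yielding only this non-strict inequality, while any strict inequality between a non-maximal $p$ and a non-minimal $q$ is automatically inherited from the strict cover inequalities internal to $P$ and $Q$. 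I then stratify by the $(G \times H)$-invariant statistic $k := \min f_P \in \{0, 1, \ldots, m\}$: after shifting by $k$, lattice points with $\min f_P = k$ biject equivariantly with $\{g_P \in (m-k)\mathscr{O}(P,\varepsilon)_{\mathbb{Z}} : \min g_P = 0\}$, whose character equals $\chi_{(m-k)\mathscr{O}(P)} - \chi_{(m-k-1)\mathscr{O}(P)}$ in $\mathrm{R}_G$ (the complementary set with $\min g_P \geq 1$ being, via a further shift, in equivariant bijection with $(m-k-1)\mathscr{O}(P,\varepsilon)_{\mathbb{Z}}$). Summing the resulting expression over $k$ and over $m$ then produces the telescoping $(1-t)$ factor, as required.

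The main obstacle I anticipate is the verification that the coupling between the two halves of $m\mathscr{O}(P \oplus_1 Q, \varepsilon \oplus_1 \delta)$ is captured by the single inequality $\min f_P \geq \max f_Q$: this is exactly the step where the $\oplus_1$ choice (as opposed to $\oplus_{-1}$) is essential, since only with label $+1$ on the interface do no additional strict inequalities between $P$ and $Q$ need to be imposed by hand beyond those already encoded in the factors $\mathscr{O}(P,\varepsilon)$ and $\mathscr{O}(Q,\delta)$.
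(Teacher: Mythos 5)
Your proposal is correct, but it proves part (ii) by a genuinely different route than the paper. The paper checks the identity in $\mathrm{R}_{G\times H}[[t]]$ pointwise: it evaluates both sides at an arbitrary $(g,h)$, uses \Cref{lem:evaluation at g} together with the unimodular equivalence $\mathscr{O}(P,\varepsilon)^{g}\cong\mathscr{O}(P/\langle g\rangle,\overline\varepsilon)$ from \Cref{lem:order polytope of a quotient poset}, and then invokes part (i) and Br\"and\'en's non-equivariant product formula \cite[Proposition 3.3]{branden04} for $h^*$ of an $\oplus_1$ ordinal sum of quotient posets, before reassembling the two cyclotomic products. You instead work coefficientwise: after the (correct) determinant bookkeeping $\det(\Id-\widetilde\eta_G t)\det(\Id-\widetilde\eta_H t)=(1-t)\det(\Id-\widetilde\eta_{G\times H}t)$, you reduce to $\EE(\mathscr{O}(P\oplus Q,\varepsilon\oplus_1\delta);t)=(1-t)\EE(\mathscr{O}(P,\varepsilon);t)\EE(\mathscr{O}(Q,\delta);t)$ and prove this by an equivariant lattice-point decomposition: the coupling $\min f_P\geq\max f_Q$ does capture all interface conditions (a nonascending cross pair $(p,q)$ always contains a $-1$ edge either below some maximal $p'\geq p$ in $P$ or above some minimal $q'\leq q$ in $Q$, so the required strictness follows from the internal strict inequalities plus the weak coupling), and the stratification by the invariant statistic $\min f_P$, with the shift bijections you describe, gives $\chi_{m\mathscr{O}(P\oplus Q)}=\sum_{k=0}^{m}(\chi_{(m-k)\mathscr{O}(P)}-\chi_{(m-k-1)\mathscr{O}(P)})\,\chi_{k\mathscr{O}(Q)}$, hence the telescoping $(1-t)$; this is also compatible with the paper's convention for $0\mathscr{O}(\,\cdot\,,\varepsilon)$. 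What each approach buys: the paper's argument is short given the cited machinery (and needs part (i) plus \Cref{lem:order polytope of a quotient poset}), whereas yours is self-contained, avoids the citation of Br\"and\'en's Proposition 3.3 (in effect giving an independent, equivariant strengthening of it for $\oplus_1$), does not use part (i) at all, and establishes the character identity directly rather than through evaluation at every group element. Your sketch of part (i) matches what the paper leaves implicit; only the phrase about strict inequalities being inherited ``between a non-maximal $p$ and a non-minimal $q$'' should be stated as above (strictness for nonascending cross pairs follows from an internal strict inequality in $P$ or in $Q$ combined with the weak coupling), which is clearly what you intend.
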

\begin{proof}
Let us prove part (ii). Fix $(g,h) \in G \times H$ where $g$ has cycle type $(\mu_1, \mu_2, \ldots, \mu_r)$ and $h$ has cycle type $(\nu_1, \nu_2, \ldots, \nu_s)$. Then, by \Cref{lem:evaluation at g} and \Cref{lem:order polytope of a quotient poset}, one has that ${\mathfrak{h}^*}^{G \times H}_{P \oplus Q, \ \varepsilon \oplus_1 \delta}(g, h)$ equals 
\[\begin{split} &h^*(\mathscr{O}(P \oplus Q, \varepsilon \oplus_1 \delta)^{(g,h)};t) \cdot \prod_{j=1}^{r}\frac{1-t^{\mu_j}}{1-t} \cdot \prod_{k=1}^{s}\frac{1-t^{\nu_k}}{1-t}\\ =\ &h^*(\mathscr{O}(P \oplus Q/{\langle (g,h) \rangle}, \overline{\varepsilon} \oplus_1 \overline{\delta});t) \cdot \prod_{j=1}^{r}\frac{1-t^{\mu_j}}{1-t} \cdot \prod_{k=1}^{s}\frac{1-t^{\nu_k}}{1-t}.\end{split}\]
By part (i) and \cite[Proposition 3.3]{branden04}, this quantity equals
\[\begin{split} &h^*(\mathscr{O}((P / \langle g \rangle, \overline\varepsilon) \oplus_1 (Q/\langle h \rangle, \overline\delta));t) \cdot \prod_{j=1}^{r}\frac{1-t^{\mu_j}}{1-t} \cdot \prod_{k=1}^{s}\frac{1-t^{\nu_k}}{1-t}\\ = \ &h^*(\mathscr{O}(P/\langle g \rangle, \overline\varepsilon);t)\cdot h^*(\mathscr{O}(Q/\langle h \rangle, \overline\delta);t) \cdot  \prod_{j=1}^{r}\frac{1-t^{\mu_j}}{1-t} \cdot \prod_{k=1}^{s}\frac{1-t^{\nu_k}}{1-t}\\ = \ &\left(h^*(\mathscr{O}(P/\langle g \rangle, \overline\varepsilon);t)\cdot \prod_{j=1}^{r}\frac{1-t^{\mu_j}}{1-t}\right) \cdot \left(h^*(\mathscr{O}(Q/\langle h \rangle, \overline\delta);t) \cdot \prod_{k=1}^{s}\frac{1-t^{\nu_k}}{1-t}\right),\end{split}\]
which is the same as ${\mathfrak{h}^*}^G_{P, \varepsilon}(t)(g) \cdot {\mathfrak{h}^*}^H_{Q, \delta}(t)(h)$ by \Cref{lem:order polytope of a quotient poset} and \Cref{lem:evaluation at g}.
\end{proof}

\begin{Lemma} \label{lem:different sign-graded structures}
Let $P$ be $\varepsilon$-graded. Then $P$ is also $\varepsilon_{\mathrm{par}}$-graded. Moreover, in $\mathrm{R}_G[[t]]$ one has \begin{equation} \label{eq:equality of equivariant h-vectors}
{\mathfrak{h}^*}^G_{P,\varepsilon}(t) = t^{\frac{r_P(\varepsilon_{\mathrm{par}})-r_P(\varepsilon)}{2}} \cdot {\mathfrak{h}^*}^G_{P,\varepsilon_{\mathrm{par}}}(t). 
\end{equation}
\end{Lemma}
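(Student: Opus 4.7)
The plan is to dispatch the first claim immediately using \Cref{rem:parity-graded}: since $P$ is $\varepsilon$-graded, it is parity-graded, and the edge labeling $\varepsilon_{\mathrm{par}}$ is then a valid sign-grading, so $P$ is $\varepsilon_{\mathrm{par}}$-graded with the rank function taking values in $\{0,1\}$.

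For the identity of equivariant $h^*$-series, my strategy is to reduce it to a non-equivariant statement applied to each quotient $P/\langle g \rangle$. Since the coefficients of both sides of \eqref{eq:equality of equivariant h-vectors} are class functions on $G$, it suffices to prove equality of the evaluations at an arbitrary $g \in G$. By \Cref{lem:evaluation at g}, with $(\mu_1, \ldots, \mu_r)$ the cycle type of $g$, one has
\[
{\mathfrak{h}^*}^{G}_{P,\alpha}(t)(g) \;=\; h^*(\mathscr{O}(P,\alpha)^g; t) \cdot \prod_{j=1}^{r}(1+t+\cdots+t^{\mu_j-1})
\]
for both $\alpha=\varepsilon$ and $\alpha=\varepsilon_{\mathrm{par}}$. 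The rightmost product depends only on the cycle type of $g$, not on the edge labeling, and hence cancels from both sides. We are thus reduced to proving the non-equivariant identity
\[
h^*(\mathscr{O}(P,\varepsilon)^g; t) \;=\; t^{(r_P(\varepsilon_{\mathrm{par}})-r_P(\varepsilon))/2} \cdot h^*(\mathscr{O}(P,\varepsilon_{\mathrm{par}})^g; t).
\]

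Next, by \Cref{rem:parity-graded} we have $\langle g \rangle \subseteq \mathrm{Aut}(P) = \mathrm{Aut}(P,\varepsilon_{\mathrm{par}})$, so \Cref{lem:order polytope of a quotient poset} applies to both labelings and yields unimodular equivalences $\mathscr{O}(P,\varepsilon)^g \cong \mathscr{O}(P/\langle g \rangle,\overline{\varepsilon})$ and $\mathscr{O}(P,\varepsilon_{\mathrm{par}})^g \cong \mathscr{O}(P/\langle g \rangle,\overline{\varepsilon_{\mathrm{par}}})$. A short check shows $\overline{\varepsilon_{\mathrm{par}}}$ coincides with the parity labeling $(\overline{\varepsilon})_{\mathrm{par}}$ of $P/\langle g \rangle$, since both labelings evaluate to $(-1)^{\ell_P(p)}$ on a covering relation $\mathcal{O} \lessdot \mathcal{O'}$ represented by $p \lessdot p'$ and the length function is preserved in passing to the quotient (as levels lift via saturated chains, see the proof of \Cref{lem:sign-graded quotient}). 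By \Cref{lem:sign-graded quotient}(ii), $r_{P/\langle g\rangle}(\overline{\varepsilon}) = r_P(\varepsilon)$ and $r_{P/\langle g\rangle}(\overline{\varepsilon_{\mathrm{par}}}) = r_P(\varepsilon_{\mathrm{par}})$. Thus the required identity becomes, for the graded poset $Q := P/\langle g \rangle$ with labeling $\alpha := \overline{\varepsilon}$,
\[
h^*(\mathscr{O}(Q,\alpha); t) \;=\; t^{(r_Q(\alpha_{\mathrm{par}})-r_Q(\alpha))/2} \cdot h^*(\mathscr{O}(Q,\alpha_{\mathrm{par}}); t),
\]
which is a purely non-equivariant statement about the two half-open order polytopes attached to the same graded poset under two different sign-gradings.

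The main obstacle is precisely this last non-equivariant identity: it compares the $h^*$-polynomials of two convex bodies obtained from $\mathscr{O}(Q)$ by removing different collections of facets (the collections determined by nonascending pairs with respect to $\alpha$ versus $\alpha_{\mathrm{par}}$). I expect this identity to follow from Br\"and\'en's $(P,\omega)$-partition machinery developed in \cite{branden04}, via the standard bijection between lattice points of $m\mathscr{O}(Q,\alpha)$ and $(Q,\omega)$-partitions for a vertex labeling $\omega$ inducing $\alpha$; a change of vertex labeling produces a uniform shift on all partitions of the shape required, so that the resulting Ehrhart series differ exactly by the factor $t^{(r_Q(\alpha_{\mathrm{par}})-r_Q(\alpha))/2}$. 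Once this combinatorial input is in place, the rest of the argument is bookkeeping, and the two sides of \eqref{eq:equality of equivariant h-vectors} are visibly equal in $\mathrm{R}_G[[t]]$ (noting that when the shift is negative, it is absorbed by the divisibility of ${\mathfrak{h}^*}^G_{P,\varepsilon_{\mathrm{par}}}(t)$ by a suitable power of $t$).
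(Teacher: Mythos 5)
Your proposal follows essentially the same route as the paper: evaluate both sides at each $g\in G$, use \Cref{lem:evaluation at g} and \Cref{lem:order polytope of a quotient poset} to cancel the cycle-type factor and reduce to a non-equivariant identity for the two sign-gradings of the graded quotient $P/\langle g\rangle$ (whose ranks are preserved by \Cref{lem:sign-graded quotient}). The one step you leave as an ``expectation'' -- the shift-by-$t^{(r'-r)/2}$ relation between the $h^*$-polynomials of the same poset under two sign-gradings -- is precisely \cite[Corollary 2.4]{branden04}, which the paper simply cites at that point, so citing it (rather than your heuristic about a uniform shift of partitions) completes the argument.
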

\begin{proof}
    The statement about $P$ being parity-graded follows from \Cref{rem:parity-graded}. To prove that \eqref{eq:equality of equivariant h-vectors} holds, let us evaluate both sides of the expression at an arbitrary $g \in G$ (with cycle type $(\mu_1, \ldots, \mu_r)$). 
    
    By \Cref{lem:evaluation at g} and \Cref{lem:order polytope of a quotient poset}, one has that \[\begin{split}{\mathfrak{h}^*}^G_{P,\varepsilon}(t)(g) &= h^*(\mathscr{O}(P,\varepsilon)^{g};t) \cdot \prod_{j=1}^{r}(1+t+t^2+\ldots+t^{\mu_j-1})\\
    &= h^*(\mathscr{O}(P/\langle g \rangle,\overline{\varepsilon});t) \cdot \prod_{j=1}^{r}(1+t+t^2+\ldots+t^{\mu_j-1}).\end{split}\]

    Since $\mathrm{Aut}(P,\varepsilon_{\mathrm{par}}) = \mathrm{Aut}(P)$ by \Cref{rem:parity-graded}, one has that $g$ is an element of both $\mathrm{Aut}(P,\varepsilon)$ and $\mathrm{Aut}(P,\varepsilon_{\mathrm{par}})$; \Cref{lem:sign-graded quotient} then implies that $P/\langle g \rangle$ is both $\varepsilon$-graded and $\varepsilon_{\mathrm{par}}$-graded, and the respective rank functions are preserved when passing to the quotient. It then follows from \cite[Corollary 2.4]{branden04} that \[h^*(\mathscr{O}(P/\langle g \rangle,\overline{\varepsilon});t) = t^{\frac{r_P(\varepsilon_{\mathrm{par}})-r_P(\varepsilon)}{2}} \cdot h^*(\mathscr{O}(P/\langle g \rangle,\overline{\varepsilon_{\mathrm{par}}});t),\]
    and thus, applying \Cref{lem:evaluation at g} again,
    \[\begin{split}
    {\mathfrak{h}^*}^G_{P,\varepsilon}(t)(g) &= t^{\frac{r_P(\varepsilon_{\mathrm{par}})-r_P(\varepsilon)}{2}} \cdot h^*(\mathscr{O}(P/\langle g \rangle,\overline{\varepsilon_{\mathrm{par}}});t) \cdot \prod_{j=1}^{r}(1+t+t^2+\ldots+t^{\mu_j-1})\\
    &= t^{\frac{r_P(\varepsilon_{\mathrm{par}})-r_P(\varepsilon)}{2}} \cdot {\mathfrak{h}^*}^G_{P,\varepsilon_{\mathrm{par}}}(t)(g),
    \end{split}\]
    whence the claim follows.
\end{proof}

We are now ready to state the first main result of the present paper, i.e., a way to access the numerator of the equivariant Ehrhart series of $\mathscr{O}(P,\varepsilon)$ via its saturations.

\begin{Theorem} \label{thm:main theorem on h^*}
Let $P$ be an $\varepsilon$-consistent poset and let $G$ be a subgroup of $\mathrm{Aut}(P, \varepsilon)$. Let $\mathcal{Q}$ be the set of all saturations of $(P,\varepsilon)$ and let $(Q_1,\delta_1),\ldots, (Q_k,\delta_k)$ be representatives of the $G$-orbits of $\mathcal{Q}$. Then
\begin{equation} \label{eq:main theorem}
{\mathfrak{h}^*}^G_{P,\varepsilon}(t) = \sum_{i=1}^k \mathrm{Ind}^G_{\mathrm{stab}_G(Q_i,\delta_i)}\mathrm{Res}^{\mathrm{Aut}(Q_i, \delta_i)}_{\mathrm{stab}_G(Q_i,\delta_i)}{\mathfrak{h}^*}^{\mathrm{Aut}(Q_i, \delta_i)}_{Q_i, \delta_i}(t).
\end{equation}
\end{Theorem}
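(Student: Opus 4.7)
The plan is to compare both sides of~\eqref{eq:main theorem} by evaluating at an arbitrary $g \in G$; since two elements of $\mathrm{R}_G$ coincide if and only if they agree on every group element, this is enough. Write $(\mu_1,\ldots,\mu_r)$ for the cycle type of $g$ as a permutation of $P$ and set $c_g(t) := \prod_{j=1}^r(1+t+\cdots+t^{\mu_j-1})$. By \Cref{lem:evaluation at g}, the LHS at $g$ equals $h^*(\mathscr{O}(P,\varepsilon)^g;t)\cdot c_g(t)$.

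For the RHS I would apply the induced character formula~\eqref{eq:induced}. Set $H_i := \mathrm{stab}_G(Q_i,\delta_i) = G \cap \mathrm{Aut}(Q_i,\delta_i)$ (using \Cref{lem:stabilizer}) and fix a system of left coset representatives $R_i$ of $G/H_i$. The condition $r^{-1}gr \in H_i$ translates to $g\cdot(rQ_i, r\delta_i) = (rQ_i, r\delta_i)$, and as $i$ ranges over orbit representatives and $r\in R_i$ with $r^{-1}gr \in H_i$, the pair $(rQ_i, r\delta_i)$ runs exactly once over each element of $\mathcal{Q}^g := \{(Q,\delta)\in\mathcal{Q}\mid g(Q,\delta)=(Q,\delta)\}$. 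Because $r\in\mathrm{Aut}(P,\varepsilon)$ preserves rank and the sign labeling, it transports the defining inequalities of $\mathscr{O}(Q_i,\delta_i)$ (both strict and weak) to those of $\mathscr{O}(rQ_i, r\delta_i)$, inducing a lattice-preserving bijection $\mathscr{O}(Q_i,\delta_i)^{r^{-1}gr} \cong \mathscr{O}(rQ_i,r\delta_i)^g$. A second application of \Cref{lem:evaluation at g} then collapses the RHS at $g$ to
\[c_g(t)\cdot\sum_{(Q,\delta)\in\mathcal{Q}^g}h^*(\mathscr{O}(Q,\delta)^g;t).\]

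Thus the theorem reduces to the non-equivariant identity
\[h^*(\mathscr{O}(P,\varepsilon)^g;t) = \sum_{(Q,\delta)\in\mathcal{Q}^g}h^*(\mathscr{O}(Q,\delta)^g;t).\]
By \Cref{lem:order polytope of a quotient poset}, each of the polytopes $\mathscr{O}(P,\varepsilon)^g$ and $\mathscr{O}(Q,\delta)^g$ (with $(Q,\delta)\in\mathcal{Q}^g$) is unimodularly equivalent to $\mathscr{O}(P/\langle g\rangle,\overline\varepsilon)$ and $\mathscr{O}(Q/\langle g\rangle,\overline\delta)$, respectively. Moreover, \Cref{lem:saturations of a quotient} furnishes a bijection $\mathcal{Q}^g \to \{\text{saturations of }(P/\langle g\rangle,\overline\varepsilon)\}$ via $(Q,\delta)\mapsto(Q/\langle g\rangle,\overline\delta)$. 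The desired identity is then the $h^*$-shadow of Br\"and\'en's disjoint union decomposition $\mathscr{O}(R,\mu) = \bigsqcup_{(S,\nu)}\mathscr{O}(S,\nu)$ applied to $(R,\mu) := (P/\langle g\rangle,\overline\varepsilon)$.

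The main obstacle is verifying rigorously that the $r$-action identifies $\mathscr{O}(Q_i,\delta_i)^{r^{-1}gr}$ with $\mathscr{O}(rQ_i,r\delta_i)^g$ not merely as convex bodies but as half-open \emph{lattice} objects (so the $h^*$-polynomials truly coincide), and that the orbit-plus-stabilizer bookkeeping matches the bijection of \Cref{lem:saturations of a quotient} cleanly; once these compatibilities are pinned down, the argument is simply an assembly of the preceding lemmas together with the non-equivariant saturation decomposition of the order polytope.
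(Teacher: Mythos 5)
Your proposal is correct and follows essentially the same route as the paper's proof: evaluation at each $g \in G$, \Cref{lem:evaluation at g}, the induced-character formula with the orbit--coset bookkeeping, the identification $\mathscr{O}(r(Q_i,\delta_i))^g = r\bigl(\mathscr{O}(Q_i,\delta_i)^{r^{-1}gr}\bigr)$, and the reduction via \Cref{lem:order polytope of a quotient poset}, \Cref{lem:stabilizer}, \Cref{lem:saturations of a quotient} and Br\"and\'en's saturation decomposition, with only the order of manipulations (expanding the right-hand side first rather than the left) differing. The ``obstacles'' you flag at the end are not real gaps: since $r$ acts on $\mathbb{Z}^P$ by a unimodular (lattice) transformation and carries $(Q_i,\delta_i)$ to $(rQ_i,r\delta_i)$ by definition, the identification is automatically lattice-preserving and the $h^*$-polynomials coincide, exactly as the paper disposes of this point in one line.
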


\begin{proof}
    We prove the statement by checking that the two sides of \eqref{eq:main theorem} yield the same quantity when evaluated at an arbitrary $g \in G$. Let $(\mu_1, \ldots, \mu_r)$ be the cycle type of $g$.

    Applying \Cref{lem:evaluation at g} to the left hand side of \eqref{eq:main theorem}, one has that
    \begin{equation*}
        {\mathfrak{h}^*}^G_{P,\varepsilon}(t)(g) = h^*(\mathscr{O}(P,\varepsilon)^{g};t)\cdot \prod_{j=1}^r \frac{1-t^{\mu_j}}{1-t}.
    \end{equation*}

    \Cref{lem:order polytope of a quotient poset} yields that $h^*(\mathscr{O}(P,\varepsilon)^{g};t) = h^*(\mathscr{O}(P/\langle g \rangle, \overline{\varepsilon});t)$; then, by \cite[Theorem 3.2]{branden04} and by \Cref{lem:saturations of a quotient}, it follows that
    \begin{equation*}
    h^*(\mathscr{O}(P/\langle g \rangle, \overline{\varepsilon});t) = \sum_{\substack{(Q',\delta') \text{ saturation}\\ \text{of } (P/\langle g \rangle, \overline{\varepsilon})}}h^*(\mathscr{O}(Q',\delta');t) = \sum_{\substack{(Q,\delta) \in \mathcal{Q}\\ \text{s.t.~}\langle g \rangle \subseteq \mathrm{Aut}(Q,\delta)}}h^*(\mathscr{O}(Q/\langle g \rangle,\overline{\delta});t).
    \end{equation*}

    Now note that $\langle g \rangle \subseteq \mathrm{Aut}(Q,\delta)$ if and only if $g \in \mathrm{Aut}(Q,\delta) \cap G$, which in turn is equivalent to asking that $g$ lies in the stabilizer of $(Q,\delta)$ with respect to the $G$-action on the set $\mathcal{Q}$ of saturations of $(P,\varepsilon)$ by \Cref{lem:stabilizer}. Hence, another application of \Cref{lem:order polytope of a quotient poset} yields that

    \begin{equation*}
        \sum_{\substack{(Q,\delta) \in \mathcal{Q}\\ \text{s.t.~}\langle g \rangle \subseteq \mathrm{Aut}(Q,\delta)}}h^*(\mathscr{O}(Q/\langle g \rangle,\overline{\delta});t) = \sum_{\substack{(Q,\delta) \in \mathcal{Q}\\ \text{s.t.~}g \in \mathrm{stab}_G(Q,\delta)}}h^*(\mathscr{O}(Q,\delta)^{g};t).
    \end{equation*}

    Taking into account that the set $\mathcal{Q}$ of saturations of $(P,\varepsilon)$ is partitioned into $G$-orbits $G(Q_1,\delta_1), \ldots, G(Q_k,\delta_k)$ we get that

    \begin{equation} \label{eq:LHS through saturations}
    \sum_{\substack{(Q,\delta) \in \mathcal{Q}\\ \text{s.t.~}g \in \mathrm{stab}_G(Q,\delta)}}h^*(\mathscr{O}(Q,\delta)^g;t) = \sum_{i=1}^{k}\sum_{\substack{g'(Q_i,\delta_i) \in G(Q_i,\delta_i)\\g \in \mathrm{stab}_G (g'(Q_i,\delta_i))}}h^*(\mathscr{O}(g'(Q_i,\delta_i))^{g};t),
    \end{equation}
    whence, recalling the orbit-stabilizer theorem and the fact that $g \in \mathrm{stab}_G(g'(Q_i,\delta_i))$ if and only if $(g')^{-1}gg' \in \mathrm{stab}_G(Q_i,\delta_i)$ (since both statements are equivalent to asking that $gg'(Q_i,\delta_i) = g'(Q_i,\delta_i)$), we obtain that the quantities in \eqref{eq:LHS through saturations} equal

    \begin{equation} \label{eq:last manipulation of LHS}
    \sum_{i=1}^{k}\sum_{\substack{g'\mathrm{stab}_G(Q_i,\delta_i) \in G/\mathrm{stab}_G(Q_i,\delta_i)\\(g')^{-1}gg' \in \mathrm{stab}_G (Q_i,\delta_i)}}h^*(\mathscr{O}(g'(Q_i,\delta_i))^{g};t).
    \end{equation}

    Now note that $\mathscr{O}(g'(Q_i,\delta_i))^{g} = g'(\mathscr{O}(Q_i,\delta_i)^{g'^{-1}gg'})$ and thus, since multiplication by $g'$ yields a unimodular transformation, the $h^*$-polynomials of $\mathscr{O}(g'(Q_i,\delta_i))^{g}$ and $\mathscr{O}(Q_i,\delta_i)^{g'^{-1}gg'}$ coincide. Thus, \eqref{eq:last manipulation of LHS} yields that

    \begin{equation} \label{eq:reformulation of LHS}
    {\mathfrak{h}^*}^G_{P,\varepsilon}(t)(g) = \left(\prod_{j=1}^r \frac{1-t^{\mu_j}}{1-t}\right) \cdot \sum_{i=1}^{k}\sum_{\substack{g'\mathrm{stab}_G(Q_i,\delta_i) \in G/\mathrm{stab}_G(Q_i,\delta_i)\\g'^{-1}gg' \in \mathrm{stab}_G (Q_i,\delta_i)}}h^*(\mathscr{O}(Q_i,\delta_i)^{g'^{-1}gg'};t).
    \end{equation}

    Let us now evaluate the right hand side of \eqref{eq:main theorem} at $g$. One gets that

    \[\begin{split}
        &\sum_{i=1}^k \mathrm{Ind}^G_{\mathrm{stab}_G(Q_i,\delta_i)}\mathrm{Res}^{\mathrm{Aut}(Q_i, \delta_i)}_{\mathrm{stab}_G(Q_i,\delta_i)}{\mathfrak{h}^*}^{\mathrm{Aut}(Q_i, \delta_i)}_{Q_i, \delta_i}(t)(g)\\
        =&\sum_{i=1}^k \sum_{\substack{g'\mathrm{stab}_G(Q_i,\delta_i) \in G/\mathrm{stab}_G(Q_i,\delta_i)\\g'^{-1}gg' \in \mathrm{stab}_G(Q_i,\delta_i)}}\mathrm{Res}^{\mathrm{Aut}(Q_i, \delta_i)}_{\mathrm{stab}_G(Q_i,\delta_i)}{\mathfrak{h}^*}^{\mathrm{Aut}(Q_i, \delta_i)}_{Q_i, \delta_i}(t)(g'^{-1}gg')\\
        =&\sum_{i=1}^k \sum_{\substack{g'\mathrm{stab}_G(Q_i,\delta_i) \in G/\mathrm{stab}_G(Q_i,\delta_i)\\g'^{-1}gg' \in \mathrm{stab}_G(Q_i,\delta_i)}}{\mathfrak{h}^*}^{\mathrm{Aut}(Q_i, \delta_i)}_{Q_i, \delta_i}(t)(g'^{-1}gg')\\
        =&\sum_{i=1}^k \sum_{\substack{g'\mathrm{stab}_G(Q_i,\delta_i) \in G/\mathrm{stab}_G(Q_i,\delta_i)\\g'^{-1}gg' \in \mathrm{stab}_G(Q_i,\delta_i)}}h^*(\mathscr{O}(Q_i, \delta_i)^{g'^{-1}gg'};t) \cdot \det(\mathrm{Id}-\widetilde{\eta}_{\mathrm{Aut}(Q_i,\delta_i)}(g'^{-1}gg')t)_{(M_{g'^{-1}gg'})^{\perp}}\\
        =&\sum_{i=1}^k \sum_{\substack{g'\mathrm{stab}_G(Q_i,\delta_i) \in G/\mathrm{stab}_G(Q_i,\delta_i)\\g'^{-1}gg' \in \mathrm{stab}_G(Q_i,\delta_i)}}h^*(\mathscr{O}(Q_i, \delta_i)^{g'^{-1}gg'};t) \cdot \prod_{j=1}^{r}\frac{1-t^{\mu_j}}{1-t},
    \end{split}\]
    where the last equality comes from the fact that the cycle type of a permutation is invariant under conjugation. Comparing with \eqref{eq:reformulation of LHS} ends the proof.
\end{proof}

\section{Equivariant \texorpdfstring{$\gamma$}{gamma}-polynomials of graded posets} \label{sec:equivariant gamma-polynomials}

This section is devoted to the study of equivariant $\gamma$-polynomials. To begin with, let us prove that these are well-defined when we work with order polytopes of ($\mathbf{1}$-)graded posets: 

\begin{Lemma} \label{lem:equivariant h*_P is a palindromic polynomial}
Let $P$ be a $\mathbf{1}$-graded poset and let $G$ be a subgroup of $\mathrm{Aut}(P)$. Then ${\mathfrak{h}^*}^G_P(t)$ is a palindromic polynomial in $\mathrm{R}_G[t]$ with effective coefficients, degree $|P|-r_P(\mathbf{1})-1$ and center of symmetry $\frac{|P|-r_P(\mathbf{1})-1}{2}$.
\end{Lemma}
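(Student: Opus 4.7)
The strategy is to combine Theorem~A (\Cref{thm:main theorem on h^*}) with two applications of \Cref{lem:different sign-graded structures} and Stapledon's known computation for hypercubes. Since $P$ is $\mathbf{1}$-graded, it is also parity-graded (\Cref{rem:parity-graded}) and $G \leq \mathrm{Aut}(P) = \mathrm{Aut}(P,\varepsilon_{\mathrm{par}})$. \Cref{lem:different sign-graded structures} gives
\[
{\mathfrak{h}^*}^G_{P,\mathbf{1}}(t) \;=\; t^{-\lfloor r_P(\mathbf{1})/2 \rfloor}\,{\mathfrak{h}^*}^G_{P,\varepsilon_{\mathrm{par}}}(t),
\]
so it suffices to prove that ${\mathfrak{h}^*}^G_{P,\varepsilon_{\mathrm{par}}}(t)$ is a polynomial in $\mathrm{R}_G[t]$ that is palindromic and effective, with lowest degree $\lfloor r_P(\mathbf{1})/2 \rfloor$, highest degree $|P|-1-\lceil r_P(\mathbf{1})/2 \rceil$ and centre $\frac{|P| - (r_P(\mathbf{1}) \bmod 2) - 1}{2}$; pulling back through the above shift will then yield the claimed palindromic polynomial of degree $|P| - r_P(\mathbf{1}) - 1$ centred at $\frac{|P| - r_P(\mathbf{1}) - 1}{2}$.

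To analyse ${\mathfrak{h}^*}^G_{P, \varepsilon_{\mathrm{par}}}(t)$, I apply Theorem~A with $\varepsilon = \varepsilon_{\mathrm{par}}$, writing it as a sum of induced-restricted terms indexed by $G$-orbits of saturations $(Q_i, \varepsilon_{\mathrm{par}})$ of $(P, \varepsilon_{\mathrm{par}})$. By \Cref{rem:nice saturations}, each $Q_i$ is an ordinal sum of antichains $A^{(i)}_0 \oplus A^{(i)}_1 \oplus \cdots \oplus A^{(i)}_{k_i}$ with $\mathrm{Aut}(Q_i) = \Sf_{|A^{(i)}_0|} \times \cdots \times \Sf_{|A^{(i)}_{k_i}|}$. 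Using \Cref{lem:different sign-graded structures} on $Q_i$ itself gives
\[
{\mathfrak{h}^*}^{\mathrm{Aut}(Q_i)}_{Q_i, \varepsilon_{\mathrm{par}}}(t) \;=\; t^{\lfloor k_i/2 \rfloor}\,{\mathfrak{h}^*}^{\mathrm{Aut}(Q_i)}_{Q_i, \mathbf{1}}(t),
\]
and iterating \Cref{lem:ordinal sum}(ii) along the ordinal decomposition of $Q_i$ (legal because every ordinal sum is of type $\oplus_1$ under the $\mathbf{1}$-labeling) factors the right-hand side as $\prod_{j=0}^{k_i}{\mathfrak{h}^*}^{\Sf_{|A^{(i)}_j|}}_{A^{(i)}_j, \mathbf{1}}(t)$. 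Each factor is the equivariant $h^*$-polynomial of the unit cube $\mathscr{O}(A^{(i)}_j) = [0,1]^{|A^{(i)}_j|}$ under the coordinate-permuting action of $\Sf_{|A^{(i)}_j|}$, which by Stapledon's explicit computation \cite[Section~9]{stapledon11} is a palindromic, effective polynomial of degree $|A^{(i)}_j| - 1$ centred at $\frac{|A^{(i)}_j| - 1}{2}$.

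Taking the product and multiplying by $t^{\lfloor k_i/2 \rfloor}$ produces, for each $i$, a palindromic effective polynomial with lowest degree $\lfloor k_i/2 \rfloor$, highest degree $|P| - 1 - \lceil k_i/2 \rceil$ and centre $\frac{|P| - (k_i \bmod 2) - 1}{2}$. Because any saturation of $(P, \varepsilon_{\mathrm{par}})$ satisfies $k_i \bmod 2 = r_{Q_i}(\varepsilon_{\mathrm{par}}) = r_P(\varepsilon_{\mathrm{par}}) = r_P(\mathbf{1}) \bmod 2$, every summand shares the common centre $\frac{|P| - (r_P(\mathbf{1}) \bmod 2) - 1}{2}$. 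Induction and restriction of $G$-characters preserve both effectiveness and the coefficient-wise palindromic structure, so summing over orbit representatives yields a palindromic effective polynomial with the predicted centre; the extremal degrees are attained by the (singleton) $G$-orbit of the ``level-by-level'' saturation $A_0 \oplus \cdots \oplus A_{r_P(\mathbf{1})}$, where $A_j$ is the set of height-$j$ elements of $P$, since $k_i \geq r_P(\mathbf{1})$ for any saturation (it must contain every maximal chain of $P$ as a chain).

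The main technical obstacle is the bookkeeping of the two uses of \Cref{lem:different sign-graded structures}, which must combine so that all saturation summands share the predicted centre and so that the shift in the first display produces an honest polynomial with the correct degree range; the only external ingredient beyond the combinatorial and representation-theoretic machinery developed in the paper is Stapledon's computation of the equivariant $h^*$-polynomial of hypercubes.
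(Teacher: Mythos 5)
Your argument is correct, but it is a genuinely different route from the paper's. The paper disposes of this lemma by citation: polynomiality and effectiveness follow from the existence of a $G$-invariant triangulation of $\mathscr{O}(P)$ (\cite[Lemma 6.4]{dali-delucchi} together with \cite[Theorem 1.4]{stapledon24}), palindromicity follows from Hibi's non-equivariant result \cite{hibi-distributive} via \cite[Corollary 6.9]{stapledon11}, and the degree from \cite[Corollary 2.4]{branden04} via \cite[Corollary 6.6]{stapledon11}. You instead re-derive everything internally: \Cref{lem:different sign-graded structures} to pass between the $\mathbf{1}$- and parity-gradings, \Cref{thm:main theorem on h^*} to decompose ${\mathfrak{h}^*}^G_{P,\varepsilon_{\mathrm{par}}}(t)$ over saturations, \Cref{rem:nice saturations} and \Cref{lem:ordinal sum}(ii) to factor each saturation term into cube contributions, and the explicit cube computation (the paper sources this to \cite[Corollary 3.2]{SW20} and \cite[Theorem 1.1]{HMSSS} in \Cref{subsec:cube}, rather than to \cite[Section 9]{stapledon11}, but the content is the same). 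The bookkeeping you worry about does close up: $k_i\equiv r_{Q_i}(\varepsilon_{\mathrm{par}})=r_P(\varepsilon_{\mathrm{par}})\equiv r_P(\mathbf 1)\pmod 2$ since the top antichain of any saturation consists of maximal elements of $P$, so all summands share the center $\frac{|P|-(r_P(\mathbf 1)\bmod 2)-1}{2}$; the rank-level decomposition $A_0\oplus\cdots\oplus A_{r_P(\mathbf 1)}$ is indeed a ($G$-fixed) saturation attaining the extremal degrees; $k_i\ge r_P(\mathbf 1)$ because a maximal chain of $P$ survives as a chain of the saturation and meets each antichain at most once; and effectiveness rules out cancellation, so the degree and the honest polynomiality after the $t^{-\lfloor r_P(\mathbf 1)/2\rfloor}$ shift both come out right. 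There is no circularity, since everything you invoke lives in Sections 2--3 and \Cref{subsec:cube}. What each approach buys: the paper's proof is three lines and yields polynomiality and effectiveness from general principles (invariant triangulations), keeping \Cref{lem:equivariant h*_P is a palindromic polynomial} logically independent of the saturation machinery that is then used in \Cref{thm:gamma-effective}; your proof avoids the external inputs \cite{dali-delucchi}, \cite{stapledon24} and Stapledon's Corollaries 6.6 and 6.9, at the price of essentially inlining the computation that the paper performs later in the proof of \Cref{thm:gamma-effective}, of which your argument is in effect a strengthened rerun.
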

\begin{proof}
D'Al\`i and Delucchi observed in \cite[Lemma 6.4]{dali-delucchi} that the order polytope $\mathscr{O}(P)$ admits a $G$-invariant triangulation. It then follows from \cite[Theorem 1.4]{stapledon24} that ${\mathfrak{h}^*}^G_P(t)$ is a polynomial, and moreover its coefficients are effective. Since Hibi \cite{hibi-distributive} proved that the (non-equivariant) $h^*$-polynomial of the order polytope of a $\mathbf{1}$-graded poset is palindromic, it follows from \cite[Corollary 6.9]{stapledon11} that so is its equivariant counterpart ${\mathfrak{h}^*}^G_P(t)$. The non-equivariant $h^*$-polynomial of an order polytope is known to have degree $|P|-r_P(\mathbf{1}) - 1$ (see, e.g., \cite[Corollary 2.4]{branden04}), and hence so does ${\mathfrak{h}^*}^G_P(t)$ by \cite[Corollary 6.6]{stapledon11}.
\end{proof}
    
\begin{Definition}
Let $P$ be a $\mathbf{1}$-graded poset and let $G$ be a subgroup of $\mathrm{Aut}(P)$. We call the \emph{equivariant $\gamma$-polynomial} of $P$ the polynomial $\boldsymbol{\gamma}^G_{P}(t) := \sum_{i=0}^{\lfloor s/2\rfloor}\boldsymbol{\gamma}_it^i \in \mathrm{R}_G[t]$, where $s = |P| - r_P(\mathbf{1}) - 1$ and the coefficients $\boldsymbol\gamma_i \in \mathrm{R}_G$ arise from \[{\mathfrak{h}^*}^G_{P}(t) = \sum_{i=0}^{\lfloor s/2\rfloor}\boldsymbol\gamma_it^i(1+t)^{s-2i}.\]
We say that $P$ is \emph{$\gamma$-effective} when $\boldsymbol\gamma_i$ is effective for every $i \in \{0, 1, \ldots, \lfloor \frac{s}{2}\rfloor\}$.
\end{Definition}

\begin{Lemma} \label{lem:gamma of order polytope}
    Let $P$ and $Q$ be $\mathbf{1}$-graded posets and let $G$ and $H$ be subgroups of $\mathrm{Aut}(P)$ and $\mathrm{Aut}(Q)$, respectively. Then:
    \[\boldsymbol{\gamma}_{P \oplus_1 Q}^{G \times H}(t) = \boldsymbol{\gamma}_P^G(t) \cdot \boldsymbol{\gamma}_Q^H(t),\]
    where ${\boldsymbol{\gamma}}^G_{P}(t)$ is considered as a $G \times H$-character by ${\boldsymbol{\gamma}}^G_{P}(t)(g,h) := {\boldsymbol{\gamma}}^G_{P}(t)(g)$ (and similarly for ${\boldsymbol{\gamma}}^H_{Q}(t)$).
\end{Lemma}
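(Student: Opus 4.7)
The plan is to combine Lemma~\ref{lem:ordinal sum}(ii) with the well-known multiplicativity of $\gamma$-polynomials of palindromic polynomials recalled in the introduction, upgraded to our equivariant setting.

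First, I would verify that the degrees match. Since $P$ and $Q$ are $\mathbf{1}$-graded, so is $P \oplus Q$ with the all-ones labeling $\mathbf{1} \oplus_1 \mathbf{1}$: every maximal saturated chain in $P \oplus Q$ is obtained by concatenating a maximal saturated chain of $P$ with a maximal saturated chain of $Q$ through the ``bridge'' edge joining a maximal element of $P$ with a minimal element of $Q$, so $r_{P \oplus Q}(\mathbf{1}) = r_P(\mathbf{1}) + r_Q(\mathbf{1}) + 1$. Writing $s_R := |R| - r_R(\mathbf{1}) - 1$, this gives $s_{P \oplus Q} = s_P + s_Q$. By Lemma~\ref{lem:equivariant h*_P is a palindromic polynomial}, each of ${\mathfrak{h}^*}^G_P(t)$, ${\mathfrak{h}^*}^H_Q(t)$ and ${\mathfrak{h}^*}^{G \times H}_{P \oplus Q}(t)$ is then a palindromic polynomial in the appropriate character ring with the expected degree and center of symmetry.

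Second, I would invoke Lemma~\ref{lem:ordinal sum}(ii), reading every polynomial as an element of $\mathrm{R}_{G \times H}[t]$ via the convention stated in the lemma, to conclude
\[
{\mathfrak{h}^*}^{G \times H}_{P \oplus Q}(t) = {\mathfrak{h}^*}^G_P(t) \cdot {\mathfrak{h}^*}^H_Q(t).
\]
Next, I would substitute the $\gamma$-expansions
\[
{\mathfrak{h}^*}^G_P(t) = \sum_{i=0}^{\lfloor s_P/2 \rfloor} \boldsymbol{\gamma}^G_{P,i}\, t^i (1+t)^{s_P - 2i}, \qquad {\mathfrak{h}^*}^H_Q(t) = \sum_{j=0}^{\lfloor s_Q/2 \rfloor} \boldsymbol{\gamma}^H_{Q,j}\, t^j (1+t)^{s_Q - 2j},
\]
and multiply to obtain
\[
{\mathfrak{h}^*}^{G \times H}_{P \oplus Q}(t) = \sum_{k=0}^{\lfloor s_{P \oplus Q}/2 \rfloor} \Big(\sum_{i+j=k} \boldsymbol{\gamma}^G_{P,i}\, \boldsymbol{\gamma}^H_{Q,j}\Big) t^k (1+t)^{s_{P \oplus Q} - 2k}.
\]

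Finally, I would appeal to the uniqueness of the $\gamma$-expansion of a palindromic polynomial of a given degree: the elements $t^k(1+t)^{s-2k}$ for $k=0,\ldots,\lfloor s/2 \rfloor$ are $\mathbb{Z}$-linearly independent in $\mathbb{Z}[t]$, hence also $\mathrm{R}_{G \times H}$-linearly independent in $\mathrm{R}_{G \times H}[t]$ (the ring $\mathrm{R}_{G \times H}$ being a free abelian group on irreducible characters). Comparing coefficients in the unique $\gamma$-expansion of ${\mathfrak{h}^*}^{G \times H}_{P \oplus Q}(t)$ yields $\boldsymbol{\gamma}^{G \times H}_{P \oplus Q, k} = \sum_{i+j=k} \boldsymbol{\gamma}^G_{P,i}\, \boldsymbol{\gamma}^H_{Q,j}$, which is exactly the coefficient of $t^k$ in $\boldsymbol{\gamma}^G_P(t)\cdot \boldsymbol{\gamma}^H_Q(t)$, proving the claim.

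No serious obstacle is expected: the heavy lifting is already done by Lemma~\ref{lem:ordinal sum}(ii); the only thing to double-check is the compatibility of the degrees and the linear independence used for the uniqueness of the $\gamma$-expansion in $\mathrm{R}_{G\times H}[t]$.
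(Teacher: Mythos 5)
Your proposal is correct and follows essentially the same route as the paper, which proves the lemma by combining \Cref{lem:equivariant h*_P is a palindromic polynomial} with \Cref{lem:ordinal sum}(ii); you have simply made explicit the details (additivity of the degree $s$, matching centers of symmetry, and uniqueness of the $\gamma$-expansion via linear independence of the $t^k(1+t)^{s-2k}$) that the paper leaves implicit.
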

\begin{proof} 
This is a consequence of \Cref{lem:equivariant h*_P is a palindromic polynomial} and \Cref{lem:ordinal sum}(ii).
\end{proof}

There is a crucial case where the $\gamma$-effectiveness of $\mathscr{O}(P)$ is known and the $\gamma$-coefficients can be described explicitly; namely, when $P$ is an antichain, and $\mathscr{O}(P)$ is thus just the unit cube acted upon by the symmetric group $\Sf_P$. This is a result due to Shareshian and Wachs \cite[Corollary 3.2]{SW20} and, in more detail, to Horiguchi et al. \cite[Theorem 1.1]{HMSSS}. For the reader's convenience, we summarize the latter work in the next subsection.

\subsection{Equivariant Ehrhart series of cubes and their \texorpdfstring{$\gamma$}{gamma}-effectiveness}\label{subsec:cube}

In this subsection we recall some known facts on the (equivariant) $h^*$-polynomials of unit cubes. 

Let $C_d =[0,1]^d$ be the $d$-dimensional unit cube (which can be thought of as the order polytope of an antichain on $d$ elements). Then the $h^*$-polynomial of $C_d$ is well-known to equal
\[A_d(t):=(1-t)^{d+1} \sum_{m \geq 0}(m+1)^d t^m.\]
The polynomial $A_d(t)$ is called the $d$-th \textit{Eulerian polynomial}. 

For any $n \in \mathbb{Z}_{>0}$, we denote by $[n]$ the set $\{1,2,3,\ldots,n\}$. Given $w=w_1 \cdots w_d \in \Sf_d$ written in one-line notation, a \textit{descent} of $w$ is $i \in [d-1]$ with $w_i>w_{i+1}$, and $\des(w)$ denotes the number of descents of $w$. 
The following explicit description of Eulerian polynomials in terms of descents is well known: $\displaystyle A_d(t)=\sum_{w \in \Sf_d}t^{\des(w)}$. 
\begin{Remark}[{\cite{FS}}]\label{rem:cube}
The $\gamma$-polynomial of any Eulerian polynomial is also known. In fact, we can express $A_d(t)$ as follows: 
\[A_d(t)=\sum_{w \in \widehat{\Sf_d}}t^{\des(w)}(1+t)^{d-1-2\des(w)},\]
where we say that $w$ has a \textit{double descent} if there is $i \in [d-2]$ such that $i$ and $i+1$ are both descents, 
and has a \textit{final descent} if $d-1$ is a descent, and $\widehat{\Sf_d}=\{w \in \Sf_d \mid w \text{ has neither a double descent nor a final descent}\}$. 
\end{Remark}

Now, let us consider the equivariant Ehrhart series of $C_d$ with respect to the action of $\Sf_d$ permuting the coordinates. 
It is known by, e.g., \cite[Corollary 3.2]{SW20} that $\mathfrak{h}^*_{C_d}(t)$ is a palindromic polynomial that is $\gamma$-effective. In fact, we know that 
\begin{align}\label{eq:gamma}
\mathfrak{h}^*_{C_d}(t)=\sum_{i=0}^{\left\lfloor \frac{d-1}{2} \right\rfloor}\chi_{d,i}t^i(1+t)^{d-1-2i},
\end{align}
where the $\chi_{d,i}$'s are characters of actual representations of $\Sf_d$. We write the associated equivariant $\gamma$-polynomial as
\[\boldsymbol{\gamma}_{C_d}(t):=\sum_{i=0}^{\left\lfloor \frac{d-1}{2} \right\rfloor} \chi_{d,i}t^i.\]

In \cite{HMSSS}, an explicit way how to describe each $\chi_{d,i}$ is given; to state such a description, we need to introduce some terminology. Let $T$ be a standard Young tableau with $d$ boxes. 
\begin{itemize}
\item An index $i \in [d-1]$ is a \textit{descent} of $T$ if $i$ appears in a higher row than does $i+1$. 
(The notions of double descent and final descent are defined analogously.) 
\item Let $\des(T) := |\{ i \in [d-1] \mid i \text{ is a descent of }T\}|$. 
\item Let $\widehat{\SYT_d}$ denote the set of all standard Young tableaux with $d$ boxes having neither a double descent nor a final descent. 
\item Let $\lambda(T)$ be the Young diagram underlying $T$. 
\item Let $\chi^\lambda$ be the irreducible character of the Specht module corresponding to the conjugacy class $\lambda$ of $\Sf_d$. 
\end{itemize}
It is proved in \cite[Theorem 1.1]{HMSSS} that 
\begin{align}\label{eq:HMSSS}
\chi_{d,i} = \sum_{\substack{T \in \widehat{\SYT_d} \\ \des(T)=i}}\chi^{\lambda(T)} \;\;\text{for }i=0,1,\ldots,\left\lfloor \frac{d-1}{2} \right\rfloor.
\end{align}
For example, one has the following: 
\begin{equation}\label{eq:ex}
\begin{split}
&\chi_{3,0}=\chi^{\tiny{\ydiagram{3}}}, \;\; \chi_{3,1}=\chi^{\tiny{\ydiagram{2,1}}}; \quad 
\chi_{4,0}=\chi^{\tiny{\ydiagram{4}}}, \;\; \chi_{4,1}=\chi^{\tiny{\ydiagram{2,2}}}+2\chi^{\tiny{\ydiagram{3,1}}}; \\
&\chi_{5,0}=\chi^{\tiny{\ydiagram{5}}}, \;\; \chi_{5,1}=2\chi^{\tiny{\ydiagram{3,2}}}+3\chi^{\tiny{\ydiagram{4,1}}}, \;\; 
\chi_{5,2}=\chi^{\tiny{\ydiagram{3,2}}}+\chi^{\tiny{\ydiagram{3,1,1}}}+\chi^{\tiny{\ydiagram{2,2,1}}}. 
\end{split}
\end{equation}

We collect here the character tables of $\Sf_d$ for small $d$'s for later calculations.

\begin{center}
\begin{table}
\begin{tabular}{l|rrr}
$\Sf_3$ &$1$ &$\chi^{\tiny{\ydiagram{1,1,1}}}$ &$\chi^{\tiny{\ydiagram{2,1}}}$ \\ \hline
$e$ &$1$ &$1$ &$2$ \\
$(1 \, 2)$ &$1$ &$-1$ &$0$ \\
$(1 \, 2 \, 3)$ &$1$ &$1$ &$-1$
\end{tabular}
\quad\quad
\begin{tabular}{l|rrrrr}
$\Sf_4$ &$1$ &$\chi^{\tiny{\ydiagram{1,1,1,1}}}$ &$\chi^{\tiny{\ydiagram{2,2}}}$ &$\chi^{\tiny{\ydiagram{3,1}}}$ &$\chi^{\tiny{\ydiagram{2,1,1}}}$\\ \hline
$e$ &$1$ &$1$ &$2$ &$3$ &$3$ \\
$(1 \, 2)$ &$1$ &$-1$ &$0$ &$1$ &$-1$ \\
$(1 \, 2 \, 3)$ &$1$ &$1$ &$-1$ &$0$ &$0$ \\
$(1 \, 2 \, 3 \, 4)$ &$1$ &$-1$ &$0$ &$-1$ &$1$ \\
$(1 \, 2)(3 \, 4)$ &$1$ &$1$ &$2$ &$-1$ &$-1$
\end{tabular} 
\bigskip
\caption{The character tables of $\Sf_3$ and $\Sf_4$.}\label{tab:S_3}
\end{table}
\end{center}

\subsection{\texorpdfstring{$\gamma$}{Gamma}-effectiveness for arbitrary order polytopes}
We are now ready to state the second main result of the present paper, i.e., an extension of Br\"and\'en's proof of $\gamma$-positivity for graded posets to the equivariant setting.

\begin{Theorem}\label{thm:gamma-effective}
Let $P$ be a $\mathbf{1}$-graded poset and let $G$ be a subgroup of $\mathrm{Aut}(P)$.
Then the order polytope $\mathscr{O}(P)$ is $\gamma$-effective.
\end{Theorem}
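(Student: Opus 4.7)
The strategy will be to apply Theorem~\ref{thm:main theorem on h^*} to $P$ equipped with the parity sign-grading $\varepsilon_{\mathrm{par}}$, transfer everything back to the $\mathbf{1}$-grading, and reduce $\gamma$-effectiveness to the known cube case of Shareshian--Wachs and Horiguchi et al.

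First, I would observe that since $P$ is $\mathbf{1}$-graded, Remark~\ref{rem:parity-graded} gives that $P$ is also $\varepsilon_{\mathrm{par}}$-graded and that $G \subseteq \mathrm{Aut}(P) = \mathrm{Aut}(P, \varepsilon_{\mathrm{par}})$. Applying Theorem~\ref{thm:main theorem on h^*} to $(P, \varepsilon_{\mathrm{par}})$ produces a decomposition of ${\mathfrak{h}^*}^G_{P, \varepsilon_{\mathrm{par}}}(t)$ as a sum of induced-restricted characters indexed by $G$-orbits of saturations $(Q_i, \delta_i)$. By Remark~\ref{rem:nice saturations}, each underlying poset $Q_i$ is an ordinal sum of antichains $A_{i,0} \oplus A_{i,1} \oplus \cdots \oplus A_{i, k_i}$ with $\delta_i = \varepsilon_{\mathrm{par}}$, and $\mathrm{Aut}(Q_i, \delta_i) = \mathrm{Aut}(Q_i) = \Sf_{|A_{i,0}|} \times \cdots \times \Sf_{|A_{i, k_i}|}$.

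Next, I would translate the identity to the $\mathbf{1}$-grading using Lemma~\ref{lem:different sign-graded structures}. The crucial point is that, since saturations preserve the rank function, one has $r_{Q_i}(\mathbf{1}) = r_P(\mathbf{1})$ and $r_{Q_i}(\varepsilon_{\mathrm{par}}) = r_P(\varepsilon_{\mathrm{par}})$, so the common shift factor $t^{(r_P(\mathbf{1}) - r_P(\varepsilon_{\mathrm{par}}))/2}$ appears on both sides and cancels, yielding
\[{\mathfrak{h}^*}^G_{P, \mathbf{1}}(t) = \sum_{i=1}^k \mathrm{Ind}^G_{\mathrm{stab}_G(Q_i, \delta_i)} \mathrm{Res}^{\mathrm{Aut}(Q_i)}_{\mathrm{stab}_G(Q_i, \delta_i)} {\mathfrak{h}^*}^{\mathrm{Aut}(Q_i)}_{Q_i, \mathbf{1}}(t).\]
Since $Q_i$ carrying the $\mathbf{1}$-grading is an $\oplus_1$-ordinal sum of antichains, iterating Lemma~\ref{lem:ordinal sum}(ii) factorises each summand as a product of equivariant $h^*$-polynomials of unit cubes:
\[{\mathfrak{h}^*}^{\mathrm{Aut}(Q_i)}_{Q_i, \mathbf{1}}(t) = \prod_{j=0}^{k_i} \mathfrak{h}^*_{C_{|A_{i,j}|}}(t).\]

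Finally, each cube factor is $\gamma$-effective by~\eqref{eq:gamma}, and by Lemma~\ref{lem:gamma of order polytope} the $\gamma$-polynomial of an $\oplus_1$-ordinal sum is the product of the $\gamma$-polynomials of its pieces. Because external tensor products of effective characters remain effective, each ${\mathfrak{h}^*}^{\mathrm{Aut}(Q_i)}_{Q_i, \mathbf{1}}(t)$ is $\gamma$-effective. Induction and restriction preserve effectiveness of characters and act coefficient-wise on the $\gamma$-expansion of palindromic polynomials of a fixed degree, so they preserve $\gamma$-effectiveness; the same is true of summation. Hence ${\mathfrak{h}^*}^G_{P, \mathbf{1}}(t)$ is $\gamma$-effective.

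The main obstacle I anticipate is the bookkeeping in the second step: verifying precisely that the $t$-shifts produced by Lemma~\ref{lem:different sign-graded structures} on the two sides of the identity coincide, so that they really do cancel. This rests on the fact that saturations share their rank function with $(P, \varepsilon_{\mathrm{par}})$, which is built into the very definition of saturation and is what allows us to convert the $\varepsilon_{\mathrm{par}}$-decomposition into one for the $\mathbf{1}$-grading, where the $\gamma$-polynomial of $\mathscr{O}(P)$ actually lives.
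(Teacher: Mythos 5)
Your overall strategy is the paper's own (saturate the parity-grading, convert back to the $\mathbf{1}$-grading via \Cref{lem:different sign-graded structures}, factor over antichains, quote the cube result), but there is a genuine error in the bookkeeping step you yourself flagged as the delicate one: the claim that ``since saturations preserve the rank function, $r_{Q_i}(\mathbf{1}) = r_P(\mathbf{1})$'' is false. A saturation of $(P,\varepsilon_{\mathrm{par}})$ preserves only the $\varepsilon_{\mathrm{par}}$-rank, i.e.\ $r_{Q_i}(\varepsilon_{\mathrm{par}}) = r_P(\varepsilon_{\mathrm{par}})$; the $\mathbf{1}$-rank of $Q_i$ is the length of a maximal chain in the \emph{extended} poset $Q_i = A_{i0}\oplus\cdots\oplus A_{ik_i}$, which is in general strictly larger than $r_P(\mathbf{1})$. (In the example of \Cref{sec:example}, $r_P(\mathbf{1})=1$ while $r_{Q_2}(\mathbf{1})=3$ and $r_{Q_5}(\mathbf{1})=5$.) Consequently the shifts do \emph{not} cancel: applying \Cref{lem:different sign-graded structures} on both sides leaves a residual factor $t^{\frac{r_{Q_i}(\mathbf{1})-r_P(\mathbf{1})}{2}}$ in each summand, and your unshifted identity
\[
{\mathfrak{h}^*}^G_{P,\mathbf{1}}(t) = \sum_{i=1}^k \mathrm{Ind}\,\mathrm{Res}\,{\mathfrak{h}^*}^{\mathrm{Aut}(Q_i)}_{Q_i,\mathbf{1}}(t)
\]
cannot hold, already for degree reasons: $\deg {\mathfrak{h}^*}^{\mathrm{Aut}(Q_i)}_{Q_i,\mathbf{1}} = |P| - r_{Q_i}(\mathbf{1}) - 1$, which is smaller than $|P|-r_P(\mathbf{1})-1$ whenever $Q_i$ genuinely extends $P$. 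This also undercuts your final step, since summands of different degrees (hence different centers of symmetry) do not have additive $\gamma$-expansions.

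The repair is exactly what the paper does: keep the shift and prove
\[
{\mathfrak{h}^*}^G_{P}(t) = \sum_{i=1}^k \mathrm{Ind}^G_{\mathrm{stab}_G(Q_i,\varepsilon_{\mathrm{par}})}\mathrm{Res}^{\mathrm{Aut}(Q_i)}_{\mathrm{stab}_G(Q_i,\varepsilon_{\mathrm{par}})}\left(t^{\frac{r_{Q_i}(\mathbf{1})-r_{P}(\mathbf{1})}{2}} \cdot {\mathfrak{h}^*}^{\mathrm{Aut}(Q_i)}_{Q_i, \mathbf{1}}(t)\right),
\]
then check that each shifted summand is palindromic with the \emph{same} center of symmetry $\frac{|P|-r_P(\mathbf{1})-1}{2}$, because $\frac{r_{Q_i}(\mathbf{1})-r_P(\mathbf{1})}{2} + \frac{|Q_i|-r_{Q_i}(\mathbf{1})-1}{2} = \frac{|P|-r_P(\mathbf{1})-1}{2}$. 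With matching centers the $\gamma$-polynomials do add, giving $\boldsymbol\gamma^G_P(t) = \sum_i \mathrm{Ind}\,\mathrm{Res}\bigl(t^{\frac{r_{Q_i}(\mathbf{1})-r_P(\mathbf{1})}{2}}\boldsymbol\gamma^{\mathrm{Aut}(Q_i)}_{Q_i}(t)\bigr)$; the extra monomial factor is a nonnegative power of $t$, so effectiveness is unaffected, and the rest of your argument (factorization over antichains via \Cref{lem:gamma of order polytope} and the cube result \eqref{eq:HMSSS}, plus the fact that induction, restriction and sums preserve effectiveness) then goes through as you intended.
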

\begin{proof}
\Cref{lem:equivariant h*_P is a palindromic polynomial} yields that ${\mathfrak{h}^*}^G_P(t)$ is a palindromic polynomial with center of symmetry $\frac{|P|-r_P(\mathbf{1})-1}{2}$; thus, the equivariant $\gamma$-polynomial $\boldsymbol\gamma^G_P(t)$ is well-defined.

Since $P$ is sign-graded, it can also be endowed with a parity-graded structure; let $\varepsilon_{\mathrm{par}}$ be the associated edge labeling. By \Cref{rem:parity-graded}, one has that every element of $G$ preserves $\varepsilon_{\mathrm{par}}$; moreover, by \Cref{lem:different sign-graded structures}, one has that
\begin{equation} \label{eq:equality of h-star in gamma-proof}
{\mathfrak{h}^*}^G_{P,\varepsilon_{\mathrm{par}}}(t) = t^{\frac{r_P(\mathbf{1}) - r_P(\varepsilon_{\mathrm{par}})}{2}} \cdot {\mathfrak{h}^*}^G_{P}(t).
\end{equation}
Let $\mathcal{Q}$ be the set of saturations of $(P,\varepsilon_{\mathrm{par}})$; by \Cref{rem:nice saturations}, any such saturation is naturally endowed with a parity-graded structure, but its underlying poset can also be given a $\mathbf{1}$-graded structure. Let $(Q_1, \varepsilon_{\mathrm{par}}), \ldots, (Q_k, \varepsilon_{\mathrm{par}})$ be representatives of the $G$-orbits of $\mathcal{Q}$. Again by \Cref{rem:nice saturations}, one has that $\mathrm{Aut}(Q_i,\varepsilon_{\mathrm{par}}) = \mathrm{Aut}(Q_i,\mathbf{1}) = \mathrm{Aut}(Q_i)$ for every $i$. By \Cref{thm:main theorem on h^*} and \Cref{lem:different sign-graded structures} one has that

\[\begin{split}{\mathfrak{h}^*}^G_{P,\varepsilon_{\mathrm{par}}}(t) &= \sum_{i=1}^k \mathrm{Ind}^G_{\mathrm{stab}_G(Q_i, \varepsilon_{\mathrm{par}})}\mathrm{Res}^{\mathrm{Aut}(Q_i)}_{\mathrm{stab}_G(Q_i, \varepsilon_\mathrm{par})}{\mathfrak{h}^*}^{\mathrm{Aut}(Q_i)}_{Q_i, \varepsilon_{\mathrm{par}}}(t)\\
&= \sum_{i=1}^k \mathrm{Ind}^G_{\mathrm{stab}_G(Q_i, \varepsilon_\mathrm{par})}\mathrm{Res}^{\mathrm{Aut}(Q_i)}_{\mathrm{stab}_G(Q_i, \varepsilon_{\mathrm{par}})}\left(t^{\frac{r_{Q_i}(\mathbf{1})-r_{Q_i}(\varepsilon_{\mathrm{par}})}{2}} \cdot {\mathfrak{h}^*}^{\mathrm{Aut}(Q_i)}_{Q_i, \mathbf{1}}(t)\right).\end{split}\]

Since $(Q_i,\varepsilon_{\mathrm{par}})$ is a saturation of $(P,\varepsilon_{\mathrm{par}})$, one has that $r_{Q_i}(\varepsilon_{\mathrm{par}}) = r_P(\varepsilon_\mathrm{par})$. Comparing with \eqref{eq:equality of h-star in gamma-proof} then yields that

\[t^{\frac{r_P(\mathbf{1}) - r_P(\varepsilon_{\mathrm{par}})}{2}} \cdot {\mathfrak{h}^*}^G_{P}(t) = \sum_{i=1}^k \mathrm{Ind}^G_{\mathrm{stab}_G(Q_i, \varepsilon_{\mathrm{par}})}\mathrm{Res}^{\mathrm{Aut}(Q_i)}_{\mathrm{stab}_G(Q_i, \varepsilon_{\mathrm{par}})}\left(t^{\frac{r_{Q_i}(\mathbf{1})-r_{P}(\varepsilon_{\mathrm{par}})}{2}} \cdot {\mathfrak{h}^*}^{\mathrm{Aut}(Q_i)}_{Q_i, \mathbf{1}}(t)\right)\]
and thus

\[{\mathfrak{h}^*}^G_{P}(t) = \sum_{i=1}^k \mathrm{Ind}^G_{\mathrm{stab}_G(Q_i, \varepsilon_{\mathrm{par}})}\mathrm{Res}^{\mathrm{Aut}(Q_i)}_{\mathrm{stab}_G(Q_i, \varepsilon_{\mathrm{par}})}\left(t^{\frac{r_{Q_i}(\mathbf{1})-r_{P}(\mathbf{1})}{2}} \cdot {\mathfrak{h}^*}^{\mathrm{Aut}(Q_i)}_{Q_i, \mathbf{1}}(t)\right)\]

It is our next goal to find out a way to describe $\boldsymbol{\gamma}^G_P(t)$. By \Cref{lem:equivariant h*_P is a palindromic polynomial}, one has that ${\mathfrak{h}^*}^{\mathrm{Aut}(Q_i)}_{Q_i, \mathbf{1}}(t)$ is a palindromic polynomial whose center of symmetry is $\frac{|Q|-r_Q(\mathbf{1})-1}{2}$. This implies that $t^{\frac{r_{Q_i}(\mathbf{1})-r_{P}(\mathbf{1})}{2}} \cdot {\mathfrak{h}^*}^{\mathrm{Aut}(Q_i)}_{Q_i, \mathbf{1}}(t)$ is a ``palindromic up to shift'' polynomial with the same center of symmetry as ${\mathfrak{h}^*}^G_{P,\mathbf{1}}(t)$, since \[\frac{r_{Q_i}(\mathbf{1})-r_P(\mathbf{1})}{2} + \frac{|Q_i| - r_{Q_i}(\mathbf{1})-1}{2} = \frac{|P| - r_P(\mathbf{1}) - 1}{2}\] (where we used the fact that $|Q_i| = |P|$, since $(Q_i, \varepsilon_{\mathrm{par}})$ is a saturation of $(P,\varepsilon_\mathrm{par})$). As a consequence, one has that

\begin{equation} \label{eq:gamma via saturations}
{\boldsymbol\gamma}^G_{P}(t) = \sum_{i=1}^k \mathrm{Ind}^G_{\mathrm{stab}_G(Q_i,\varepsilon_{\mathrm{par}})}\mathrm{Res}^{\mathrm{Aut}(Q_i)}_{\mathrm{stab}_G(Q_i,\varepsilon_{\mathrm{par}})}\left(t^{\frac{r_{Q_i}(\mathbf{1})-r_P(\mathbf{1})}{2}} \cdot {\boldsymbol\gamma}^{\mathrm{Aut}(Q_i)}_{Q_i}(t)\right).
\end{equation}

In particular, to prove the claim it is enough to show that the coefficients of each ${\boldsymbol\gamma}^{\mathrm{Aut}(Q_i)}_{Q_i}(t)$ are effective. By \Cref{rem:nice saturations}, the poset underlying $Q_i$ is the ordinal sum $A_{i0} \oplus A_{i1} \oplus \ldots \oplus A_{ik_i}$, where each $A_{ij}$ is an antichain; applying \Cref{lem:gamma of order polytope} then yields that
\[{\boldsymbol\gamma}^{\mathrm{Aut}(Q_i)}_{Q_i}(t) = {\boldsymbol\gamma}_{A_{i0}}^{\Sf_{A_{i0}}}(t) \cdot {\boldsymbol\gamma}_{A_{i1}}^{\Sf_{A_{i1}}}(t)\cdots {\boldsymbol\gamma}_{A_{ik_i}}^{\Sf_{A_{ik_i}}}(t).\]

To conclude, it now suffices to show that each ${\mathfrak{h}^*}_{A_{ij}}^{\Sf_{A_{ij}}}(t)$ is $\gamma$-effective. This is precisely the content of \Cref{subsec:cube}, see in particular \eqref{eq:HMSSS}.
\end{proof}

\section{An example} \label{sec:example}

We devote this section to a guided example illustrating the $\gamma$-effectiveness of ${\mathfrak{h}^*}^G_P(t)$ when $P$ is $\mathbf{1}$-graded and $G \subseteq \mathrm{Aut}(P)$.

Throughout this section, we consider the $\mathbf{1}$-graded poset $P=\{p_1,\ldots,p_8\}$ as in Figure~\ref{fig:D_4}. Note that, in this case, the $\mathbf{1}$-graded and the parity-graded structures of $P$ coincide.

\begin{center}
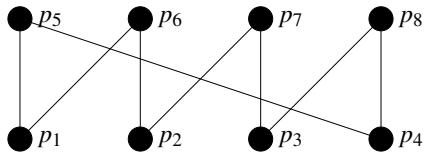
\begin{figure}[h]
%\hspace{2.5cm}
\scalebox{0.8}{\begin{tikzpicture}
\foreach \x in {1, 2, 3, 4} {
    \draw[fill=black] (\x*2-2, 0) circle [radius=0.2];
    \node at (\x*2 - 1.5, 0) {$p_{\x}$}; 
}
\foreach \x in {5,6,7,8} {
    \draw[fill=black] (\x*2-10, 2) circle [radius=0.2];
    \node at (\x*2 - 9.5, 2) {$p_{\x}$}; 
}
\foreach \x in {0, 1, 2, 3} {
    \draw (\x*2, 0) -- (\x*2, 2); 
}
\draw (0,0) -- (2,2);  
\draw (2,0) -- (4,2); 
\draw (4,0) -- (6,2); 
\draw (6,0) -- (0,2); 
\end{tikzpicture}}

\caption{The poset $P$.}\label{fig:D_4}

\end{figure}
\end{center}
Let $G=D_4=\langle \sigma, \tau \mid \sigma^4=\tau^2=e, \sigma\tau\sigma=\tau\rangle$ be the dihedral group of order $8$. We record in \Cref{tab:D_4} its table of characters, and note here for later use that the character $\chi_{\mathrm{reg}}$ of the regular representation of $G$ (which arises as the representation induced on $G$ by the trivial one on $\{e\}$) is $\chi_{\mathrm{reg}} = 1 + \chi_1 + \chi_2 + \chi_3 + 2\chi_4$.

\begin{center}
\begin{table}
\begin{tabular}{l|rrrrr}
$D_4$ &$1$ &$\chi_1$ &$\chi_2$ &$\chi_3$ &$\chi_4$\\ \hline
$e$ &$1$ &$1$ &$1$ &$1$ &$2$ \\
$\{\sigma,\sigma^3\}$ &$1$ &$1$ &$-1$ &$-1$ &$0$ \\
$\sigma^2$ &$1$ &$1$ &$1$ &$1$ &$-2$ \\
$\{\tau,\tau\sigma^2\}$ &$1$ &$-1$ &$1$ &$-1$ &$0$ \\ 
$\{\tau\sigma,\tau\sigma^3\}$ &$1$ &$-1$ &$-1$ &$1$ &$0$ \\ 
\end{tabular}
\bigskip
\caption{The character table of the dihedral group $D_4$.}\label{tab:D_4}
\end{table}
\end{center}

The group $G$ acts on $P$ as follows: \vspace{-0.4cm}
\[\hspace{-1cm}
\sigma \cdot p_i=\begin{cases}p_{i+1} &\text{ for }i=1,2,3,5,6,7; \\ p_1 &\text{ for }i=4; \\ p_5 &\text{ for }i=8; \end{cases} 
\quad\text{and}\quad \tau\cdot p_i=\begin{cases}
p_i &\text{ for }i=1,3; \\
p_4 &\text{ for }i=2; \\
p_2 &\text{ for }i=4; \\
p_{i+1} &\text{ for }i=5,7; \\
p_{i-1} &\text{ for }i=6,8. 
\end{cases}
\]
\vspace{-0.4cm}
\begin{figure}[h]
\begin{minipage}[h]{0.48\linewidth}
\hspace{0.8cm}
\scalebox{0.8}{
\begin{tikzpicture}
\foreach \x in {1, 2, 3, 4} {
    \draw[fill=black] (\x*2-2, 0) circle [radius=0.2];
    \node at (\x*2 - 1.5, -0.5) {$p_{\x}$}; 
}
\foreach \x in {5,6,7,8} {
    \draw[fill=black] (\x*2-10, 2) circle [radius=0.2];
    \node at (\x*2 - 9.5, 1.5) {$p_{\x}$}; 
}
\foreach \x in {0, 1, 2, 3} {
    \draw (\x*2, 0) -- (\x*2, 2); 
}
\draw (0,0) -- (2,2);  
\draw (2,0) -- (4,2); 
\draw (4,0) -- (6,2); 
\draw (6,0) -- (0,2); 
\foreach \x in {1, 2, 3}{
\draw[->, ultra thick] (\x*2-1.7, 0) -- (\x*2-0.3,0); 
\draw[->, ultra thick] (\x*2-1.7, 2) -- (\x*2-0.3,2); 
}
\draw[<-, ultra thick] (0.2, 0.2) .. controls (3, 0.8) .. (5.8, 0.2);
\draw[<-, ultra thick] (0.2, 2.2) .. controls (3, 2.8) .. (5.8, 2.2);
\end{tikzpicture}}
%\vspace{-0.4cm}
\caption{The action by $\sigma$.}
\end{minipage}
\begin{minipage}[h]{0.48\linewidth}
\hspace{0.8cm}
\scalebox{0.8}{
\begin{tikzpicture}
\foreach \x in {1, 2, 3, 4} {
    \draw[fill=black] (\x*2-2, 0) circle [radius=0.2];
    \node at (\x*2 - 1.5, -0.5) {$p_{\x}$}; 
}
\foreach \x in {5,6,7,8} {
    \draw[fill=black] (\x*2-10, 2) circle [radius=0.2];
    \node at (\x*2 - 9.5, 1.5) {$p_{\x}$}; 
}
\foreach \x in {0, 1, 2, 3} {
    \draw (\x*2, 0) -- (\x*2, 2); 
}
\draw (0,0) -- (2,2);  
\draw (2,0) -- (4,2); 
\draw (4,0) -- (6,2); 
\draw (6,0) -- (0,2); 
\draw[<->, ultra thick] (2.2, 0.2) .. controls (4, 0.8) .. (5.8, 0.2);
\draw[<->, ultra thick] (0.2, 2.2) .. controls (1, 2.8) .. (1.8, 2.2); 
\draw[<->, ultra thick] (4.2, 2.2) .. controls (5, 2.8) .. (5.8, 2.2);
\end{tikzpicture}}
%\vspace{-0.4cm}
\caption{The action by $\tau$.}
\end{minipage}
\end{figure}

We describe in \Cref{fig:saturations} all saturations of $P$, where only $(-1)$-labelings are explicitly marked, and all saturations are grouped by $G$-representatives as in \Cref{thm:main theorem on h^*}. For example, regarding $Q_2$, ``$\times 8$'' means that the set of all saturations of $P$ contains 8 copies of the same poset with different vertex labelings, and those form one $G$-orbit. The possible vertex labelings of the $Q_2$-shape are shown in \Cref{fig:G-orbit of Q_2}.

%%%%%%%%%%%%%%%%
%%%%%%%%%%%%%%%%
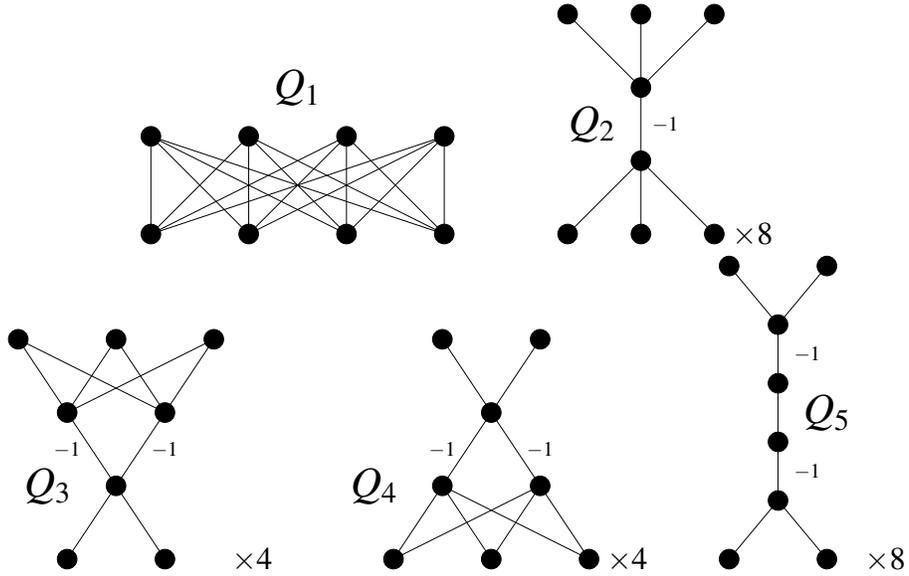
\begin{figure}[h]
\begin{center}
\scalebox{0.65}{\begin{tikzpicture}
\foreach \x in {1, 2, 3, 4} {
    \draw[fill=black] (\x*2-2, 1) circle [radius=0.2];
}
\foreach \x in {5,6,7,8} {
    \draw[fill=black] (\x*2-10, 3) circle [radius=0.2];
}
\foreach \x in {0, 1, 2, 3} {
    \draw (\x*2, 1) -- (\x*2, 3); 
}
\foreach \x in {0, 1, 2} {
    \draw (\x*2, 1) -- (\x*2+2, 3); 
    \draw (\x*2+2, 1) -- (\x*2, 3); 
}
\foreach \x in {0, 1} {
    \draw (\x*2, 1) -- (\x*2+4, 3); 
    \draw (\x*2+4, 1) -- (\x*2, 3); 
}
\draw (0,1) -- (6,3);
\draw (6,1) -- (0,3);
\node at (3,4) {{\huge $Q_1$}};
\end{tikzpicture}}
\quad\quad\quad
\scalebox{0.65}{\begin{tikzpicture}
\foreach \x in {1, 2, 3} {
    \draw[fill=black] (\x*1.5-1.5, 0) circle [radius=0.2];
    \draw[fill=black] (\x*1.5-1.5, 4.5) circle [radius=0.2];
}
\draw[fill=black] (1.5,1.5) circle [radius=0.2]; 
\draw[fill=black] (1.5,3) circle [radius=0.2]; 
\foreach \x in {0, 1, 2} {
    \draw (\x*1.5, 0) -- (1.5, 1.5); 
    \draw (1.5,3) -- (\x*1.5, 4.5); 
}
\draw (1.5,1.5) -- (1.5,3);
\node at (2, 2.25) {$-1$}; 
\node at (0.5, 2.25) {{\huge $Q_2$}};
\end{tikzpicture}} $\times 8$
\end{center}
%%%%%%%%%%%%%%%%

%%%%%%%%%%%%%%%%
\begin{center}
%%%%%%%%%%%%%%%%
%%%%%%%%%%%%%%%%
\scalebox{0.65}{\begin{tikzpicture}
\foreach \x in {1, 2} {
    \draw[fill=black] (\x*2, 0) circle [radius=0.2];
    \draw[fill=black] (\x*2, 3) circle [radius=0.2];
}
\draw[fill=black] (3,1.5) circle [radius=0.2]; 
\node at (1.6, 1.5) {{\huge $Q_3$}};
\foreach \x in {1, 2, 3} {
    \draw[fill=black] (\x*2-1, 4.5) circle [radius=0.2];
}
\foreach \x in {1, 2} {
    \draw (\x*2, 0) -- (3, 1.5); 
    \draw (\x*2, 3) -- (3, 1.5); 
}
\foreach \x in {1, 2, 3} {
    \draw (2, 3) -- (2*\x-1, 4.5); 
    \draw (4, 3) -- (2*\x-1, 4.5); 
}
\node at (2, 2.25) {$-1$}; 
\node at (4, 2.25) {$-1$}; 
\end{tikzpicture}} $\times 4$ \quad\quad
%%%%%%%%%%%%%%%%
%%%%%%%%%%%%%%%%
\scalebox{0.65}{\begin{tikzpicture}
\foreach \x in {1, 2} {
    \draw[fill=black] (\x*2, 4.5) circle [radius=0.2];
    \draw[fill=black] (\x*2, 1.5) circle [radius=0.2];
}
\draw[fill=black] (3,3) circle [radius=0.2]; 
\foreach \x in {1, 2, 3} {
    \draw[fill=black] (\x*2-1, 0) circle [radius=0.2];
}
\foreach \x in {1, 2} {
    \draw (\x*2, 4.5) -- (3, 3); 
    \draw (\x*2, 1.5) -- (3, 3); 
}
\node at (0.6, 1.5) {{\huge $Q_4$}};
\foreach \x in {1, 2, 3} {
    \draw (2, 1.5) -- (2*\x-1, 0); 
    \draw (4, 1.5) -- (2*\x-1, 0); 
}
\node at (2, 2.25) {$-1$}; 
\node at (4, 2.25) {$-1$}; 
\end{tikzpicture}} $\times 4$ \quad\quad
%%%%%%%%%%%%%%%%
%%%%%%%%%%%%%%%%
\scalebox{0.65}{\begin{tikzpicture}
\foreach \x in {1, 2} {
    \draw[fill=black] (\x*2, 0) circle [radius=0.2];
    \draw[fill=black] (\x*2, 6) circle [radius=0.2];
}
\foreach \x in {1, 2, 3, 4} {
    \draw[fill=black] (3,\x*1.2) circle [radius=0.2]; 
}
\foreach \x in {1, 2} {
    \draw (\x*2, 0) -- (3, 1.2); 
    \draw (\x*2, 6) -- (3, 4.8); 
}
\foreach \x in {1, 2, 3} {
    \draw (3, 1.2*\x) -- (3, 1.2*\x+1.2); 
}
\node at (3.6, 1.8) {$-1$}; 
\node at (3.6, 4.2) {$-1$}; 
\node at (4, 3) {{\huge $Q_5$}};
\end{tikzpicture}} $\times 8$ 
\caption{The saturations of $(P,\mathbf{1}) = (P,\varepsilon_{\mathrm{par}})$.} \label{fig:saturations}
\end{center}
\end{figure}

\begin{figure}[h]
\begin{center}
\scalebox{0.65}{
\begin{tikzpicture}
\foreach \x in {1, 2, 3} {
    \draw[fill=black] (\x*1.5-1.5, 0) circle [radius=0.2];
    \draw[fill=black] (\x*1.5-1.5, 4.5) circle [radius=0.2];
}
\draw[fill=black] (1.5,1.5) circle [radius=0.2]; 
\draw[fill=black] (1.5,3) circle [radius=0.2]; 
\foreach \x in {0, 1, 2} {
    \draw (\x*1.5, 0) -- (1.5, 1.5); 
    \draw (1.5,3) -- (\x*1.5, 4.5); 
}
\draw (1.5,1.5) -- (1.5,3);
\node at (2, 2.25) {$-1$}; 
\node at (0.5, 0) {$p_2$}; 
\node at (2, 0) {$p_3$}; 
\node at (3.5, 0) {$p_4$}; 
\node at (0.5, 4.5) {$p_5$}; 
\node at (2, 4.5) {$p_6$}; 
\node at (3.5, 4.5) {$p_8$}; 
\node at (2, 1.5) {$p_7$}; 
\node at (2, 3) {$p_1$}; 
\end{tikzpicture} \quad\quad \begin{tikzpicture}
\foreach \x in {1, 2, 3} {
    \draw[fill=black] (\x*1.5-1.5, 0) circle [radius=0.2];
    \draw[fill=black] (\x*1.5-1.5, 4.5) circle [radius=0.2];
}
\draw[fill=black] (1.5,1.5) circle [radius=0.2]; 
\draw[fill=black] (1.5,3) circle [radius=0.2]; 
\foreach \x in {0, 1, 2} {
    \draw (\x*1.5, 0) -- (1.5, 1.5); 
    \draw (1.5,3) -- (\x*1.5, 4.5); 
}
\draw (1.5,1.5) -- (1.5,3);
\node at (2, 2.25) {$-1$}; 
\node at (0.5, 0) {$p_2$}; 
\node at (2, 0) {$p_3$}; 
\node at (3.5, 0) {$p_4$}; 
\node at (0.5, 4.5) {$p_5$}; 
\node at (2, 4.5) {$p_6$}; 
\node at (3.5, 4.5) {$p_7$}; 
\node at (2, 1.5) {$p_8$}; 
\node at (2, 3) {$p_1$}; 
\end{tikzpicture} \quad\quad 
\begin{tikzpicture}
\foreach \x in {1, 2, 3} {
    \draw[fill=black] (\x*1.5-1.5, 0) circle [radius=0.2];
    \draw[fill=black] (\x*1.5-1.5, 4.5) circle [radius=0.2];
}
\draw[fill=black] (1.5,1.5) circle [radius=0.2]; 
\draw[fill=black] (1.5,3) circle [radius=0.2]; 
\foreach \x in {0, 1, 2} {
    \draw (\x*1.5, 0) -- (1.5, 1.5); 
    \draw (1.5,3) -- (\x*1.5, 4.5); 
}
\draw (1.5,1.5) -- (1.5,3);
\node at (2, 2.25) {$-1$}; 
\node at (0.5, 0) {$p_1$}; 
\node at (2, 0) {$p_3$}; 
\node at (3.5, 0) {$p_4$}; 
\node at (0.5, 4.5) {$p_6$}; 
\node at (2, 4.5) {$p_7$}; 
\node at (3.5, 4.5) {$p_8$}; 
\node at (2, 1.5) {$p_5$}; 
\node at (2, 3) {$p_2$}; 
\end{tikzpicture} \quad\quad \begin{tikzpicture}
\foreach \x in {1, 2, 3} {
    \draw[fill=black] (\x*1.5-1.5, 0) circle [radius=0.2];
    \draw[fill=black] (\x*1.5-1.5, 4.5) circle [radius=0.2];
}
\draw[fill=black] (1.5,1.5) circle [radius=0.2]; 
\draw[fill=black] (1.5,3) circle [radius=0.2]; 
\foreach \x in {0, 1, 2} {
    \draw (\x*1.5, 0) -- (1.5, 1.5); 
    \draw (1.5,3) -- (\x*1.5, 4.5); 
}
\draw (1.5,1.5) -- (1.5,3);
\node at (2, 2.25) {$-1$}; 
\node at (0.5, 0) {$p_1$}; 
\node at (2, 0) {$p_3$}; 
\node at (3.5, 0) {$p_4$}; 
\node at (0.5, 4.5) {$p_5$}; 
\node at (2, 4.5) {$p_6$}; 
\node at (3.5, 4.5) {$p_7$}; 
\node at (2, 1.5) {$p_8$}; 
\node at (2, 3) {$p_2$}; 
\end{tikzpicture}}\end{center}
\begin{center}
\scalebox{0.65}{\begin{tikzpicture}
\foreach \x in {1, 2, 3} {
    \draw[fill=black] (\x*1.5-1.5, 0) circle [radius=0.2];
    \draw[fill=black] (\x*1.5-1.5, 4.5) circle [radius=0.2];
}
\draw[fill=black] (1.5,1.5) circle [radius=0.2]; 
\draw[fill=black] (1.5,3) circle [radius=0.2]; 
\foreach \x in {0, 1, 2} {
    \draw (\x*1.5, 0) -- (1.5, 1.5); 
    \draw (1.5,3) -- (\x*1.5, 4.5); 
}
\draw (1.5,1.5) -- (1.5,3);
\node at (2, 2.25) {$-1$}; 
\node at (0.5, 0) {$p_1$}; 
\node at (2, 0) {$p_2$}; 
\node at (3.5, 0) {$p_4$}; 
\node at (0.5, 4.5) {$p_6$}; 
\node at (2, 4.5) {$p_7$}; 
\node at (3.5, 4.5) {$p_8$}; 
\node at (2, 1.5) {$p_5$}; 
\node at (2, 3) {$p_3$}; 
\end{tikzpicture} \quad\quad \begin{tikzpicture}
\foreach \x in {1, 2, 3} {
    \draw[fill=black] (\x*1.5-1.5, 0) circle [radius=0.2];
    \draw[fill=black] (\x*1.5-1.5, 4.5) circle [radius=0.2];
}
\draw[fill=black] (1.5,1.5) circle [radius=0.2]; 
\draw[fill=black] (1.5,3) circle [radius=0.2]; 
\foreach \x in {0, 1, 2} {
    \draw (\x*1.5, 0) -- (1.5, 1.5); 
    \draw (1.5,3) -- (\x*1.5, 4.5); 
}
\draw (1.5,1.5) -- (1.5,3);
\node at (2, 2.25) {$-1$}; 
\node at (0.5, 0) {$p_1$}; 
\node at (2, 0) {$p_2$}; 
\node at (3.5, 0) {$p_4$}; 
\node at (0.5, 4.5) {$p_5$}; 
\node at (2, 4.5) {$p_7$}; 
\node at (3.5, 4.5) {$p_8$}; 
\node at (2, 1.5) {$p_6$}; 
\node at (2, 3) {$p_3$}; 
\end{tikzpicture} \quad\quad
\begin{tikzpicture}
\foreach \x in {1, 2, 3} {
    \draw[fill=black] (\x*1.5-1.5, 0) circle [radius=0.2];
    \draw[fill=black] (\x*1.5-1.5, 4.5) circle [radius=0.2];
}
\draw[fill=black] (1.5,1.5) circle [radius=0.2]; 
\draw[fill=black] (1.5,3) circle [radius=0.2]; 
\foreach \x in {0, 1, 2} {
    \draw (\x*1.5, 0) -- (1.5, 1.5); 
    \draw (1.5,3) -- (\x*1.5, 4.5); 
}
\draw (1.5,1.5) -- (1.5,3);
\node at (2, 2.25) {$-1$}; 
\node at (0.5, 0) {$p_1$}; 
\node at (2, 0) {$p_2$}; 
\node at (3.5, 0) {$p_3$}; 
\node at (0.5, 4.5) {$p_5$}; 
\node at (2, 4.5) {$p_7$}; 
\node at (3.5, 4.5) {$p_8$}; 
\node at (2, 1.5) {$p_6$}; 
\node at (2, 3) {$p_4$}; 
\end{tikzpicture} \quad\quad \begin{tikzpicture}
\foreach \x in {1, 2, 3} {
    \draw[fill=black] (\x*1.5-1.5, 0) circle [radius=0.2];
    \draw[fill=black] (\x*1.5-1.5, 4.5) circle [radius=0.2];
}
\draw[fill=black] (1.5,1.5) circle [radius=0.2]; 
\draw[fill=black] (1.5,3) circle [radius=0.2]; 
\foreach \x in {0, 1, 2} {
    \draw (\x*1.5, 0) -- (1.5, 1.5); 
    \draw (1.5,3) -- (\x*1.5, 4.5); 
}
\draw (1.5,1.5) -- (1.5,3);
\node at (2, 2.25) {$-1$}; 
\node at (0.5, 0) {$p_1$}; 
\node at (2, 0) {$p_2$}; 
\node at (3.5, 0) {$p_3$}; 
\node at (0.5, 4.5) {$p_5$}; 
\node at (2, 4.5) {$p_6$}; 
\node at (3.5, 4.5) {$p_8$}; 
\node at (2, 1.5) {$p_7$}; 
\node at (2, 3) {$p_4$}; 
\end{tikzpicture}}
\caption{The $G$-orbit of the saturation $(Q_2, \varepsilon_{\mathrm{par}})$.} \label{fig:G-orbit of Q_2}
\end{center}
\end{figure}
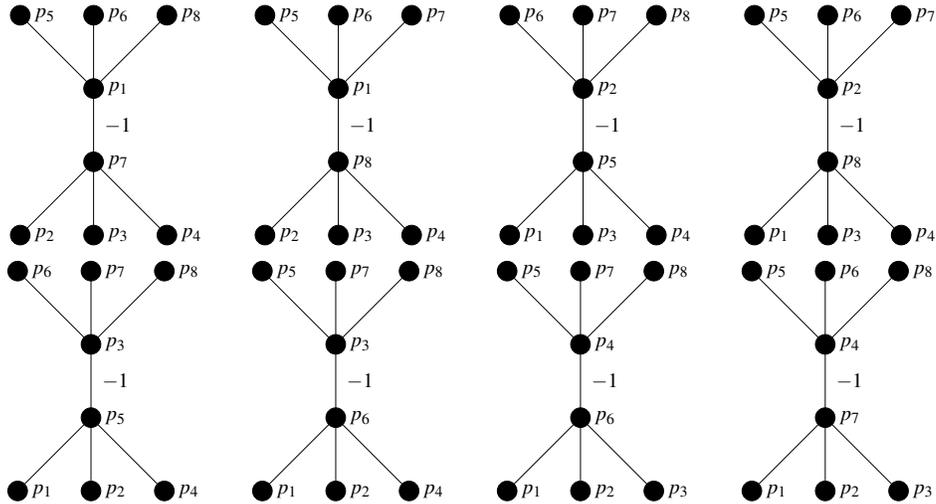

Below, we compute the $\gamma$-polynomial of the equivariant $h^*$-polynomial of $P$ by going through the summands of \eqref{eq:gamma via saturations}, which correspond to the five $G$-orbits partitioning the set of all saturations of $P$. 

\medskip

\noindent
\fbox{$Q_1$} \; Note that this class forms a single $G$-orbit: hence, by the orbit-stabilizer theorem, $\mathrm{stab}_G(Q_1,\varepsilon_{\mathrm{par}})=G$. Moreover, this saturation is of the form $A_1 \oplus_1 A_2$, where $|A_1|=|A_2|=4$, and the parity-graded structure of $Q_1$ coincides with the $\mathbf{1}$-graded one. In particular, $r_{Q_1}(\mathbf{1}) = r_P(\mathbf{1})$.

Therefore, one has that the summand corresponding to $Q_1$ in \eqref{eq:gamma via saturations} is

\begin{equation} \label{eq:Q1 example}
\mathrm{Ind}^G_G\ \mathrm{Res}^{\Sf_{A_1}\times \Sf_{A_2}}_{G}\left(t^0 \cdot {\boldsymbol\gamma}^{\Sf_{A_1}\times \Sf_{A_2}}_{A_1 \oplus_1 A_2}(t)\right) = \mathrm{Res}^{\Sf_{A_1}\times \Sf_{A_2}}_{G}\left(\boldsymbol\gamma^{\Sf_{A_1}}_{A_1}(t)\cdot \boldsymbol\gamma^{\Sf_{A_2}}_{A_2}(t)\right).
\end{equation}

By the results in \Cref{subsec:cube}, we know how to write an explicit expression for $\boldsymbol\gamma^{\Sf_{A_i}}_{A_i}(t)$; indeed, we know that \[\boldsymbol\gamma^{\Sf_{A_i}}_{A_i}(t) = 1 + \chi_{A_i,1}t,\] where $\chi_{A_i,1}$ is a suitable $\Sf_4$-character. Then the quantity in \eqref{eq:Q1 example} becomes

\[\begin{split}&\phantom{==}\!\mathrm{Res}^{\mathfrak{S}_{A_1} \times \Sf_{A_2}}_G\left((1+\chi_{A_1,1}t)(1+\chi_{A_2,1}t)\right)
\\&= 1 + \left(\mathrm{Res}^{\mathfrak{S}_{A_1} \times \Sf_{A_2}}_G(\chi_{A_1,1}+\chi_{A_2,1})\right)t + \left(\mathrm{Res}^{\mathfrak{S}_{A_1} \times \Sf_{A_2}}_G(\chi_{A_1,1}\chi_{A_2,1})\right)t^2\\&=:1+\boldsymbol\gamma_1^{(1)}t+\boldsymbol\gamma_2^{(1)}t^2.\end{split}\]

In order to get explicit expressions for $\boldsymbol\gamma_1^{(1)}$ and $\boldsymbol\gamma_2^{(1)}$, we first need to describe the characters $\chi_{A_i,1}$. We said earlier that these are ``morally'' $\Sf_4$-characters, but we actually need to be more careful, as $\Sf_4$ embeds in two different ways into the direct product $\Sf_4 \times \Sf_4$; more precisely, it embeds as $\Sf_4 \times \{e\}$ when it acts on the cube $\mathscr{O}(A_1)$, and as $\{e\} \times \Sf_4$ when it acts on the cube $\mathscr{O}(A_2)$. To remember such a difference, we will decorate the irreducible $\Sf_4$-characters in play with an ``$i$'' (e.g., $\chi^{{\tiny\ydiagram{2,2}}, i}$) if they arise from the action on $\mathscr{O}(A_i)$. By \eqref{eq:ex}, for $i \in \{1,2\}$ we get that 
\[\chi_{A_i,1} = \chi^{{\tiny\ydiagram{2,2}}, i}+2\chi^{{\tiny\ydiagram{3,1}}, i}.\]
The explicit decompositions of $\gamma_1^{(1)}$ and $\gamma_2^{(1)}$ as sums of irreducible $G$-characters are as follows: 
\[\gamma_1^{(1)}=2+3\chi_2+3\chi_3+4\chi_4 \quad\text{and} \quad \gamma_2^{(1)}=9+9\chi_1+7\chi_2+7\chi_3+16\chi_4,\]
where each $\chi_i$ is as in Table~\ref{tab:D_4}. 
We briefly explain how to get the decomposition of $\gamma_2^{(1)}$; one follows a similar process for $\gamma_1^{(1)}$. Since 
each conjugacy class of $G$ translates as one of $\Sf_4 \times \Sf_4$ via the following dictionary \begin{align*}
e\in G &\;\longleftrightarrow\; (e,e) \in \Sf_{A_1} \times \Sf_{A_2}; \\ 
\sigma, \sigma^3 \in G &\;\longleftrightarrow\; ((1 \, 2 \, 3 \, 4),(1 \, 2 \, 3 \, 4)) \in \Sf_{A_1} \times \Sf_{A_2}; \\
\sigma^2 \in G &\;\longleftrightarrow\; ((1 \, 2)(3 \, 4),(1 \, 2)(3 \, 4)) \in \Sf_{A_1} \times \Sf_{A_2}; \\
\tau, \tau\sigma^2 \in G &\;\longleftrightarrow\; ((1 \, 2),(1 \, 2)(3 \, 4)) \in \Sf_{A_1} \times \Sf_{A_2}; \\ 
\tau\sigma,\tau\sigma^3 \in G &\;\longleftrightarrow\; ((1 \, 2)(3 \, 4), (1 \, 2)) \in \Sf_{A_1} \times \Sf_{A_2}, 
\end{align*}
using \Cref{tab:S_3} we see that 
\begin{align*}
\gamma_2^{(1)}(g)&=\mathrm{Res}_G^{\Sf_{A_1} \times \Sf_{A_2}}(\chi_{A_1,1}\chi_{A_2,1})(g) \\
&=\mathrm{Res}_G^{\Sf_{A_1} \times \Sf_{A_2}}((\chi^{{\tiny\ydiagram{2,2}}, 1}+2\chi^{{\tiny\ydiagram{3,1}}, 1})(\chi^{{\tiny\ydiagram{2,2}}, 2}+2\chi^{{\tiny\ydiagram{3,1}}, 2}))(g) \\
&=\mathrm{Res}_G^{\Sf_{A_1} \times \Sf_{A_2}}(\chi^{{\tiny\ydiagram{2,2}}, 1}\chi^{{\tiny\ydiagram{2,2}}, 2}+2\chi^{{\tiny\ydiagram{3,1}}, 1}\chi^{{\tiny\ydiagram{2,2}}, 2}+2\chi^{{\tiny\ydiagram{2,2}}, 1}\chi^{{\tiny\ydiagram{3,1}}, 2}+4\chi^{{\tiny\ydiagram{3,1}}, 1}\chi^{{\tiny\ydiagram{3,1}}, 2})(g) \\
&=\begin{cases}
64 &\text{ if }g=e, \\
4 &\text{ if }g=\sigma,\sigma^3, \\
0 &\text{ if }g=\sigma^2, \\
0 &\text{ if }g=\tau,\tau\sigma^2, \\
0 &\text{ if }g=\tau\sigma,\tau\sigma^3, 
\end{cases}
\end{align*}
which by comparison with \Cref{tab:D_4} yields that $\gamma_2^{(1)}=9+9\chi_1+7\chi_2+7\chi_3+16\chi_4$.

\medskip

\noindent
\fbox{$Q_2$} \; There are $8$ copies of the same poset $(Q_2,\varepsilon_{\mathrm{par}})$, which is of the form $A_1 \oplus_1 A_2 \oplus_{-1} A_3 \oplus_1 A_4$, where $|A_1|=|A_4|=3$ and $|A_2|=|A_3|=1$. 
This implies that $\mathrm{stab}_G(Q_2, \varepsilon_{\mathrm{par}})=\{e\}$. 
In this case, $r_{Q_2}(\mathbf{1})=3$. The summand corresponding to $Q_2$ in \eqref{eq:gamma via saturations} is then
\begin{align*}
\mathrm{Ind}^G_{\{e\}}&\mathrm{Res}^{\Sf_{A_1} \times \Sf_{A_2} \times \Sf_{A_3} \times \Sf_{A_4}}_{\{e\}}\left( t^{\frac{3-1}{2}} \cdot \gamma_{A_1 \oplus_1 A_2 \oplus_1 A_3 \oplus_1 A_4}^{\Sf_{A_1} \times \Sf_{A_2} \times \Sf_{A_3} \times \Sf_{A_4}}(t)\right) \\
&=t \cdot \mathrm{Ind}^G_{\{e\}}\mathrm{Res}^{\Sf_{A_1} \times \cdots \times \Sf_{A_4}}_{\{e\}}\left( \gamma_{A_1}^{\Sf_{A_1}}(t)\gamma_{A_2}^{\Sf_{A_2}}(t)\gamma_{A_3}^{\Sf_{A_3}}(t)\gamma_{A_4}^{\Sf_{A_4}}(t)\right) \\
&=t \cdot \mathrm{Ind}^G_{\{e\}}\mathrm{Res}^{\Sf_{A_1} \times \cdots \times \Sf_{A_4}}_{\{e\}}\left((1+\chi_{A_1,1}t)\cdot 1 \cdot 1 \cdot
(1+\chi_{A_4,1}t)\right) \\
&=t \cdot \mathrm{Ind}^G_{\{e\}}(1+4t+4t^2), 
\end{align*}
where for $i \in \{1,4\}$ we get that $\chi_{A_i,1} = \chi^{{\tiny\ydiagram{2,1}}, i}$ by \eqref{eq:ex}, and thus $\mathrm{Res}^G_{\{e\}} \chi_{A_1,1} = \mathrm{Res}^G_{\{e\}} \chi_{A_4,1} = 2$. Recalling that inducing from the trivial representation on $\{e\}$ gives rise to the regular character $\chi_{\mathrm{reg}}=1+\chi_1+\chi_2+\chi_3+2\chi_4$, we conclude that 
\[
\mathrm{Ind}^G_{\{e\}}\mathrm{Res}^{\Sf_{A_1} \times \Sf_{A_2} \times \Sf_{A_3} \times \Sf_{A_4}}_{\{e\}}\left( t \cdot \gamma_{A_1 \oplus_1 A_2 \oplus_1 A_3 \oplus_1 A_4}^{\Sf_{A_1} \times \Sf_{A_2} \times \Sf_{A_3} \times \Sf_{A_4}}(t)\right) = \chi_{\mathrm{reg}} \cdot t(1+4t+4t^2). 
\]

\medskip

\noindent
\fbox{$Q_3$} \; There are $4$ copies of the poset $Q_3$, which is of the form $A_1 \oplus_1 A_2 \oplus_{-1} A_3 \oplus_1 A_4$, 
where $|A_1|=|A_3|=2$, $|A_2|=1$ and $|A_4|=3$. In this case, we have $\mathrm{stab}_G(Q_3,\varepsilon_{\mathrm{par}})=\{e,\tau\sigma\}$ or $\{e,\tau\sigma^3\}$, depending on the choice of representatives. 
Let, say, $\mathrm{stab}_G(Q_3,\varepsilon_{\mathrm{par}})=\{e,\tau\sigma\}$. Noting that $r_{Q_3}(\mathbf{1}) = 3$, we see that 
\begin{align*}
\mathrm{Ind}^G_{\{e,\tau\sigma\}}&\mathrm{Res}^{\Sf_{A_1} \times \cdots \times \Sf_{A_4}}_{\{e,\tau\sigma\}}\left(t^{\frac{3-1}{2}} \cdot \gamma^{\Sf_{A_1} \times \Sf_{A_2} \times \Sf_{A_3} \times \Sf_{A_4}}_{A_1 \oplus_1 A_2 \oplus_1 A_3 \oplus_1 A_4}(t)\right) \\
&=t\cdot \mathrm{Ind}^G_{\{e,\tau\sigma\}}\mathrm{Res}^{\Sf_{A_4}}_{\{e,\tau\sigma\}}\gamma^{\Sf_{A_4}}_{A_4}(t) \\
&=t\cdot \mathrm{Ind}^G_{\{e,\tau\sigma\}}\mathrm{Res}^{\Sf_{A_4}}_{\{e,\tau\sigma\}}(1+\chi_{A_4,1}t),\end{align*}
where $\chi_{A_4,1}=\chi^{{\tiny\ydiagram{2,1}},4}$. One can check that $\tau\sigma$ gives rise to a transposition of $\Sf_{A_4}$; hence, the restriction of $\chi^{{\tiny\ydiagram{2,1}},4}$ to $\{e,\tau\sigma\} (\cong \mathbb{Z}_2)$ equals $2$ when evaluated at $e$ and $0$ when evaluated at $\tau\sigma$, and thus $\mathrm{Res}^{\Sf_{A_4}}_{\{e,\tau\sigma\}}(\chi^{{\tiny\ydiagram{2,1}},4}) = 1 + \chi_{\mathrm{sgn}}$ (here $\chi_{\mathrm{sgn}}$ is the alternating character of $\mathbb{Z}_2$). Since $1 + \chi_{\sgn} = \mathrm{Ind}^{\{e, \tau\sigma\}}_{\{e\}}(1)$, one has that $\mathrm{Ind}^G_{\{e,\tau\sigma\}}\mathrm{Ind}^{\{e,\tau\sigma\}}_{\{e\}}(1) = \mathrm{Ind}^G_{\{e\}}(1) = \chi_{\mathrm{reg}}$. The character $\mathrm{Ind}^G_{\{e,\tau\sigma\}}(1)$ is instead the character of the permutation action of $G$ on the set of left cosets $G/\{e,\tau\sigma\}$, and one checks that this equals $1 + \chi_3 + \chi_4$. Putting all the pieces together, we obtain that
\[\begin{split}&\phantom{=}\ \ \mathrm{Ind}^G_{\{e,\tau\sigma\}}\mathrm{Res}^{\Sf_{A_1} \times \cdots \times \Sf_{A_4}}_{\{e,\tau\sigma\}}\left(t \cdot \gamma^{\Sf_{A_1} \times \Sf_{A_2} \times \Sf_{A_3} \times \Sf_{A_4}}_{A_1 \oplus_1 A_2 \oplus_1 A_3 \oplus_1 A_4}(t)\right)
\\&=t\cdot \mathrm{Ind}^G_{\{e,\tau\sigma\}}\left(\mathrm{Res}^{\Sf_{A_4}}_{\{e,\tau\sigma\}}(1+\chi^{{\tiny\ydiagram{2,1}},4} \cdot t)\right)
\\&=t\cdot \mathrm{Ind}^G_{\{e,\tau\sigma\}}\left(1+ \mathrm{Ind}^{\{e,\tau\sigma\}}_{\{e\}}(1) \cdot t\right)
\\&=t\cdot \left(\mathrm{Ind}^G_{\{e,\tau\sigma\}}(1)+\mathrm{Ind}^G_{\{e\}}(1) \cdot t\right)
\\&=t\cdot (1+\chi_3+\chi_4+\chi_{\mathrm{reg}}\cdot t).\end{split}\]

\medskip

\noindent
\fbox{$Q_4$} \; Similarly to the case of $Q_3$, we obtain that 
\[\mathrm{Ind}^G_{\{e,\tau\}}\mathrm{Res}^{\Sf_{A_1} \times \cdots \times \Sf_{A_4}}_{\{e,\tau\}}\left(t \cdot \gamma^{\Sf_{A_1} \times \cdots \times \Sf_{A_4}}_{A_1 \oplus_1 \cdots \oplus_1 A_4}(t)\right)=t\cdot(1+\chi_2+\chi_4+\chi_{\mathrm{reg}}\cdot t),\]
where $1+\chi_2+\chi_4$ is the character of the permutation action of $G$ on the set of left cosets $G/\{e,\tau\}$. 

\medskip

\noindent
\fbox{$Q_5$} \; There are $8$ copies of the same poset $Q_5$, 
which is of the form $A_1 \oplus_1 A_2 \oplus_{-1} A_3 \oplus_1 A_4 \oplus_{-1} A_5 \oplus_1 A_6$, where $|A_1|=|A_6|=2$, and $|A_2|=\cdots=|A_5|=1$. In particular, $\mathrm{stab}_G(Q_5,\varepsilon_{\mathrm{par}})=\{e\}$ and $r_{Q_5}(\mathbf{1}) = 5$. In this case, we obtain the following: 
\[\mathrm{Ind}^G_{\{e\}}\mathrm{Res}^{\Sf_{A_1} \times \cdots \times \Sf_{A_6}}_{\{e\}}\left(t^{\frac{5-1}{2}} \cdot \gamma^{\Sf_{A_1} \times \cdots \times \Sf_{A_6}}_{A_1 \oplus_1 \cdots \oplus_1 A_6}(t)\right) = t^2 \cdot \chi_{\mathrm{reg}}.\]

\medskip

\noindent
Putting together all the summands in \eqref{eq:gamma via saturations}, we conclude that 
\begin{align*}
\gamma_P^G(t)&=1+(2+3\chi_2+3\chi_3+4\chi_4)t+(9+9\chi_1+7\chi_2+7\chi_3+16\chi_4)t^2 \\
&\quad +t \cdot (\chi_{\mathrm{reg}}+4\chi_{\mathrm{reg}}t+4\chi_{\mathrm{reg}}t^2) \\
&\quad +t \cdot (1+\chi_3+\chi_4+\chi_{\mathrm{reg}}t)+t \cdot (1+\chi_2+\chi_4+\chi_{\mathrm{reg}}t)+t^2 \cdot \chi_{\mathrm{reg}} \\
&=1+(5+\chi_1+5\chi_2+5\chi_3+8\chi_4)t \\
&\quad +(16+16\chi_1+14\chi_2+14\chi_3+30\chi_4)t^2 \\
&\quad +4(1+\chi_1+\chi_2+\chi_3+2\chi_4)t^3. 
\end{align*}
%%%%%%%%%%%%%%%%%%%%%%%%%%%%%%%%%%%%%%%%%%%%%%%%%%%%%%%%%%%%%%%%%%%%%%%%%%%%%%%%%%%%%%%%%%%%%%%%%%%%%%%%%%%
%%%%%%%%%%%%%%%%%%%%%%%%%%%%%%%%%%%%%%%%%%%%%%%%%%%%%%%%%%%%%%%%%%%%%%%%%%%%%%%%%%%%%%%%%%%%%%%%%%%%%%%%%%%
%%%%%%%%%%%%%%%%%%%%%%%%%%%%%%%%%%%%%%%%%%%%%%%%%%%%%%%%%%%%%%%%%%%%%%%%%%%%%%%%%%%%%%%%%%%%%%%%%%%%%%%%%%%
%%%%%%%%%%%%%%%%%%%%%%%%%%%%%%%%%%%%%%%%%%%%%%%%%%%%%%%%%%%%%%%%%%%%%%%%%%%%%%%%%%%%%%%%%%%%%%%%%%%%%%%%%%%
%%%%%%%%%%%%%%%%%%%%%%%%%%%%%%%%%%%%%%%%%%%%%%%%%%%%%%%%%%%%%%%%%%%%%%%%%%%%%%%%%%%%%%%%%%%%%%%%%%%%%%%%%%%

\bigskip

\bibliographystyle{plain}
\bibliography{bibliography}

\begin{thebibliography}{10}

\bibitem{ASV}
Federico Ardila, Mariel Supina, and Andr\'es~R. Vindas-Mel\'endez.
\newblock The equivariant {E}hrhart theory of the permutahedron.
\newblock {\em Proc. Amer. Math. Soc.}, 148(12):5091--5107, 2020.

\bibitem{athanasiadis-gamma}
Christos~A. Athanasiadis.
\newblock Gamma-positivity in combinatorics and geometry.
\newblock {\em S\'{e}m. Lothar. Combin.}, 77:Art. B77i, 64, [2016--2018].

\bibitem{beck-robins}
Matthias Beck and Sinai Robins.
\newblock {\em Computing the continuous discretely}.
\newblock Undergraduate Texts in Mathematics. Springer, New York, second
  edition, 2015.
\newblock Integer-point enumeration in polyhedra, With illustrations by David
  Austin.

\bibitem{branden04}
Petter Br{\"a}nd{\'e}n.
\newblock Sign-graded posets, unimodality of {$W$}-polynomials and the
  {C}harney-{D}avis conjecture.
\newblock {\em Electron. J. Combin.}, 11(2):Research Paper 9, 15, 2004/06.

\bibitem{CHK}
Oliver Clarke, Akihiro Higashitani, and Max K\"olbl.
\newblock The equivariant {E}hrhart theory of polytopes with order-two
  symmetries.
\newblock {\em Proc. Amer. Math. Soc.}, 151(9):4027--4041, 2023.

\bibitem{dali-delucchi}
Alessio D'Al{\`i} and Emanuele Delucchi.
\newblock Equivariant {H}ilbert and {E}hrhart series under translative group
  actions, 2024.
\newblock Preprint, arXiv:2312.14088.

\bibitem{elia-kim-supina}
Sophia Elia, Donghyun Kim, and Mariel Supina.
\newblock Techniques in equivariant {Ehrhart} theory.
\newblock {\em Ann. Comb.}, 28(3):819--870, 2024.

\bibitem{ferroni-higashitani}
Luis Ferroni and Akihiro Higashitani.
\newblock Examples and counterexamples in {E}hrhart theory.
\newblock To appear on \emph{EMS Surveys in Mathematical Sciences}. arXiv
  version: arXiv:2307.10852.

\bibitem{FS}
Dominique Foata and Marcel-P. Sch\"utzenberger.
\newblock {\em Th\'eorie g\'eom\'etrique des polyn\^omes eul\'eriens}, volume
  Vol. 138 of {\em Lecture Notes in Mathematics}.
\newblock Springer-Verlag, Berlin-New York, 1970.

\bibitem{hibi-distributive}
Takayuki Hibi.
\newblock Distributive lattices, affine semigroup rings and algebras with
  straightening laws.
\newblock In {\em Commutative algebra and combinatorics ({K}yoto, 1985)},
  volume~11 of {\em Adv. Stud. Pure Math.}, pages 93--109. North-Holland,
  Amsterdam, 1987.

\bibitem{HMSSS}
Tatsuya Horiguchi, Mikiya Masuda, Takashi Sato, John Shareshian, and Jongbaek
  Song.
\newblock The representation of $\mathfrak{S}_n$ on the cohomology of the
  permutohedral variety and gamma vectors of partitioned permutohedra, 2024.
\newblock Preprint, arXiv:2405.09242.

\bibitem{serre}
Jean-Pierre Serre.
\newblock {\em Linear representations of finite groups. {Translated} from the
  {French} by {Leonard} {L}. {Scott}}, volume~42 of {\em Grad. Texts Math.}
\newblock Springer, Cham, 1977.

\bibitem{SW20}
John Shareshian and Michelle~L. Wachs.
\newblock Gamma-positivity of variations of {Eulerian} polynomials.
\newblock {\em J. Comb.}, 11(1):1--33, 2020.

\bibitem{stanley-poset}
Richard~P. Stanley.
\newblock Two poset polytopes.
\newblock {\em Discrete Comput. Geom.}, 1(1):9--23, 1986.

\bibitem{stanley-ec1}
Richard~P. Stanley.
\newblock {\em Enumerative combinatorics. {V}olume 1}, volume~49 of {\em
  Cambridge Studies in Advanced Mathematics}.
\newblock Cambridge University Press, Cambridge, second edition, 2012.

\bibitem{stapledon11}
Alan Stapledon.
\newblock Equivariant {E}hrhart theory.
\newblock {\em Adv. Math.}, 226(4):3622--3654, 2011.

\bibitem{stapledon24}
Alan Stapledon.
\newblock Equivariant {E}hrhart theory, commutative algebra and invariant
  triangulations of polytopes, 2024.
\newblock Preprint, arXiv:2311.17273.

\end{thebibliography}

\end{document}